%-----------------------------------------------------------------------
% Beginning of memo-l-template.tex
%-----------------------------------------------------------------------
%
% This is a template file for AMS Memoirs for use with AMS-LaTeX 2.0.
% Separate chapters should be included at the appropriate position.
%
%%%%%%%%%%%%%%%%%%%%%%%%%%%%%%%%%%%%%%%%%%%%%%%%%%%%%%%%%%%%%%%%%%%%%%%%
\documentclass[11pt,letterpaper]{amsart}
%\documentclass{memo-l}

%    For use when working on individual chapters
%\includeonly{}
%%%%%%%%%%%%%%%%%%%%%%%%%%%%%%%%%%%%%%%%%%
% include packages
%%%%%%%%%%%%%%%%%%%%%%%%%%%%%%%%%%%%%%%%%%%
\usepackage{letltxmacro}
\usepackage{amsmath}
\usepackage{amssymb}
\usepackage{graphicx}
\usepackage{amsthm}
\usepackage{pifont}
\usepackage{eucal}
\usepackage{mathrsfs}
\usepackage{color}
\usepackage{enumitem}
\usepackage{hyperref}
\usepackage{siunitx}
\usepackage{cancel}
\usepackage[normalem]{ulem}
\usepackage[all]{xy}
\usepackage{tikz-cd}
\usepackage{nameref}

\usepackage{mathalfa}
\usepackage[cbgreek]{textgreek}
\DeclareFontFamily{OT1}{pzc}{}
\DeclareFontShape{OT1}{pzc}{m}{it}{<-> s * [1.20] pzcmi7t}{}
\DeclareMathAlphabet{\mathpzc}{OT1}{pzc}{m}{it}

%%%%%%%%%%%%%%%%%%%%%%%%%%%%%%%%%%%%%%%%%%%%%%%%%%%%%%%%%%%%%%\definecolor{shadecolor}{rgb}{0.9,0.9,0.9}

\addtolength{\parskip}{4pt}
\setlength{\parindent}{0pt}

%%%%%%%%%%%%%%%%%%%%%%%%%%%%%%%%%%%%%%%%%%%
% User defined macros
%%%%%%%%%%%%%%%%%%%%%%%%%%%%%%%%%%%%%%%%%%%
%\include{MyMacros}

%%%%%%%%%%%%%%%%%%%%%%%%%%%%%%%%%%%%%%%%%%%
% Theorem environment
%%%%%%%%%%%%%%%%%%%%%%%%%%%%%%%%%%%%%%%%%%%
\newtheorem{theorem}{Theorem}
\newtheorem*{theorem*}{Theorem}
\newtheorem{lemma}[theorem]{Lemma}
\newtheorem{proposition}[theorem]{Proposition}
\newtheorem{corollary}[theorem]{Corollary}
\newtheorem{definition}[theorem]{Definition}
\newcommand\at[2]{\left.#1\right|_{#2}}

\newenvironment{PROOF}[1]{{\flushleft {\bfseries Proof of #1}:}}{\hfill\ensuremath{\Box}}

\theoremstyle{definition}

\theoremstyle{remark}

%\numberwithin{section}{chapter}
%\numberwithin{equation}{chapter}
\makeindex
% =================================================================
% Document-specific macros
% =================================================================

% Brackets:
%%%%%%%%
\newcommand{\brk}[1]{\left(#1\right)}  % \brk{.} => (.)
  % \Brk{.} => [.]
\newcommand{\BRK}[1]{\left\{#1\right\}}% \BRK{.} => {.}

\newcommand{\bra}{\langle}
\newcommand{\ket}{\rangle}

% Cross referencing
%%%%%%%%%%%
\newcommand{\appref}[1]{Appendix~\ref{#1}}
\newcommand{\secref}[1]{Section~\ref{#1}}

\newcommand{\thmref}[1]{Theorem~\ref{#1}}
\newcommand{\defref}[1]{Definition~\ref{#1}}

\newcommand{\propref}[1]{Proposition~\ref{#1}}
\newcommand{\lemref}[1]{Lemma~\ref{#1}}
\newcommand{\corrref}[1]{Corollary~\ref{#1}}
%%%%%%%%%%%%%%%%%%%%%%%%%%%%%%
\LetLtxMacro{\originaleqref}{\eqref}
\renewcommand{\eqref}{\originaleqref}
% Begin-End
%%%%%%%
\newcommand{\beq}{\begin{equation}}
\newcommand{\eeq}{\end{equation}}

% Fonts
%%%%%%%

\newcommand{\frakG}{\mathfrak{G}}
\newcommand{\frakS}{\mathfrak{S}}
\newcommand{\frakT}{\mathfrak{T}}
\newcommand{\frakX}{\mathfrak{X}}

\newcommand{\frakn}{\mathfrak{n}}
\newcommand{\frakt}{\mathfrak{t}}

\newcommand{\frakI}{\mathfrak{I}}

\newcommand{\bbE}{{\mathbb E}}
\newcommand{\bbF}{{\mathbb F}}
\newcommand{\bbG}{{\mathbb G}}
\newcommand{\bbJ}{{\mathbb J}}

\newcommand{\bbN}{{\mathbb N}}
\newcommand{\bbP}{{\mathbb P}}
\newcommand{\bbR}{{\mathbb R}}

\newcommand{\bbZ}{{\mathbb Z}}

\newcommand{\scrC}{\mathscr{C}}

\newcommand{\scrE}{\mathscr{E}}
\newcommand{\scrH}{\mathscr{H}}
\newcommand{\scrN}{\mathscr{N}}
\newcommand{\scrR}{\mathscr{R}}

\newcommand{\scrM}{\mathscr{M}}

\newcommand{\calG}{{\mathcal{G}}}
\newcommand{\calL}{{\mathcal{L}}}

\newcommand{\calR}{{\mathcal{R}}}

\newcommand{\module}{\scrH}

\newcommand{\bA}{\mathpzc{A}}

\newcommand{\bD}{A}

\newcommand{\fbD}{\mathpzc{A}}

\newcommand{\bB}{B}
\newcommand{\bC}{\mathpzc{C}}
\newcommand{\bP}{\mathpzc{G}}
\newcommand{\tbP}{\mathpzc{P}}
\newcommand{\tGamma}{\Gamma}
\newcommand{\ttGamma}{\tilde{\Gamma}}
% Operators
%%%%%%%%%%

\usepackage{ifthen}

\usepackage{ifthen}

\usepackage{ifthen}

\newcommand{\Dng}[1]{\varrho_{g}^{#1}}

\newcommand{\trace}{\mathrm{tr}}
\renewcommand{\ker}{\operatorname{ker}}
\newcommand{\image}{\operatorname{im}}

\newcommand{\Rm}{\mathrm{Rm}}

\newcommand{\ord}{\operatorname{ord}}

\newcommand{\End}{{\operatorname{End}}}
\newcommand{\id}{{\operatorname{Id}}}
\newcommand{\Ah}{{\operatorname{A}}}

\newcommand{\Ein}{\mathrm{Ein}}
\newcommand{\Ric}{\mathrm{Ric}}

\newcommand{\textand}{\quad\text{ and }\quad}
\newcommand{\Textand}{\qquad\text{ and }\qquad}

\newcommand{\overbar}[1]{\mkern 1.5mu\overline{\mkern-1.5mu#1\mkern-1.5mu}\mkern 1.5mu}

\newcommand{\dM}{{\partial M}}
\newcommand{\Nzero}{{\bbN_0}}

\newcommand{\E}{\bbE}

\newcommand{\g}{g}

\newcommand{\PnD}{\bbP^{\frakn}}
\newcommand{\PttD}{\bbP^{\frakt\frakt}}
\newcommand{\PtnD}{\bbP^{\frakt\frakn}}
\newcommand{\PntD}{\bbP^{\frakn\frakt}}
\newcommand{\PnnD}{\bbP^{\frakn\frakn}}
\newcommand{\nabg}{\nabla^{g}}

\newcommand{\gD}{\gamma}
\newcommand{\OP}{\mathrm{OP}}

\newcommand{\Hg}{H_{g}}

\newcommand{\dg}{d_{g}}

\newcommand{\deltag}{\delta_{g}}

\newcommand{\dgV}{d_{g}^V}

\newcommand{\deltagV}{\delta_{g}^V}
\newcommand{\dertZero}{\at{\tfrac{d}{dt}}{t=0}}

\newcommand{\Def}{\delta_{g}^*}
\newcommand{\Defd}{\delta^*_{\gD}}

\newcommand{\Sh}{\mathrm{S}}

\newcommand{\Volume}{\mathrm{Vol}}

\newcommand{\pzcdel}{\text{\textit{\textbf{\textdelta}}}}

\newcommand{\pzcDRic}{\mathpzc{Ric}_{g}}
\newcommand{\pzcD}{\mathpzc{D}}

\newcommand{\starG}{\star_g}
\newcommand{\starGV}{\starG^V}

\newcommand{\dr}{\partial_r}

\newcommand{\PttG}{\bbP^{\frakt\frakt}}
\newcommand{\PtnG}{\bbP^{\frakt\frakn}_g}
\newcommand{\PntG}{\bbP^{\frakn\frakt}_g}
\newcommand{\PnnG}{\bbP^{\frakn\frakn}_g}

\newcommand{\NN}{\mathrm{N}}

\newcommand{\DD}{\mathrm{D}}

\newcommand{\D}{\mathrm{D}}

\newcommand{\Lkm}[2]{\Lambda^{#1,#2}T^{*}{M}}
\newcommand{\Gkm}[2]{\calG^{#1,#2}T^{*}{M}}
\newcommand{\Wkm}[2]{\Omega^{#1,#2}(M)}
\newcommand{\Ckm}[2]{\scrC^{#1,#2}(M)}

\newcommand{\plWkm}[2]{\Omega^{#1,#2}(\dM)}
\newcommand{\plCkm}[2]{\scrC^{#1,#2}(\dM)}

\newcommand{\SM}{S^{2}(M)}
\newcommand{\SdM}{S^{2}(\dM)}

\newcommand{\XM}{\frakX(M)}

\newcommand{\StM}{S^{2}_{\frakt}(M)}
\newcommand{\SnM}{S^{2}_{\frakn}(M)}
\newcommand{\XzM}{\frakX_0(M)}

\newcommand{\dBianchi}{d_{g}}
\newcommand{\delBianchi}{\delta_g}
\newcommand{\dBianchiV}{d_g^V}

\makeatletter
\newcommand*\owedge{\mathpalette\@owedge\relax}
\newcommand*\@owedge[1]{%
  \mathbin{%
\ooalign{%
  $#1\m@th\bigcirc$\cr
  \hidewidth$#1\m@th\wedge$\hidewidth\cr
}%
  }%
}
\makeatother

%\newcommand{\checked}{{\color{green}\ding{51}}}
%\newcommand{\deleted}{{\color{red} \ding{55}}}

%%%%%%%%%%%%%%%%%%%%%%%%%%%%%%%%%%%%%%%%%%%
% Numbering
%%%%%%%%%%%%%%%%%%%%%%%%%%%%%%%%%%%%%%%%%%%
\numberwithin{equation}{section}

\sisetup{
round-mode = places,
round-precision = 2,
}

\begin{document} 
\title[Cohomology for Linearized Ricci Curvature]{
Cohomology for Linearized Ricci Curvature 
}

\author{
Roee Leder}
\email{roee.leder@mail.huji.ac.il}
\address{
Einstein Institute of Mathematics,
The Hebrew University,
Jerusalem 9190401 Israel.
}
%\subjclass[2020]{Primary 53C21, 58A14; Secondary 35B60}
%%\keywords{Einstein equations, Hodge theory, Riemannian manifolds with boundary,
%%pseudodifferential boundary problems, unique continuation}

\maketitle

\begin{abstract}
The Ricci curvature equations are a central subject of study in geometry. However, in the smooth real case, their linear analysis is often confined to settings in which the background metric is Einstein. In this paper, we establish solvability and uniqueness conditions for the linearized problem on any compact Riemannian manifold with boundary. These conditions are formulated in terms of the cohomology of a canonical cochain complex, constructed by means of a generalized Hodge theory based on pseudodifferential methods. An important element of the theory is that it allows the incorporation of tensorial error terms, arising from linearized metric-dependent sources or from connections on the manifold of metrics. Using Bochner technique, we prove vanishing theorems for the cohomology under geometric assumptions on the boundary and error term, without imposing further interior restrictions.
\end{abstract}

%Our framework allows the incorporation of error terms into the problem, which may arise from a connection on the space of metrics. 

%%%The framework also provides a characterization of the range of the Killing operator, valid in arbitrary geometry, together with a conformal formulation of the theory. Finally,
%%%for linearized overdetermined geometric problems 
%: notably, by incorporating connections on Riemannian metrics, the first vanishes if the boundary is convex, and the second if the boundary is round and adapted to traceless-transverse (TT) tensors. The cohomological picture also captures the natural gauge group of geometric inverse problems and relates to the injectivity of the Lichnerowicz Laplacians on TT tensors.
%%%This is the first such result without rigid structural restrictions on topology, geometry, or source. 

%moduli spaces of locally c flat metrics with prescribed curvature components and geometric Cauchy data are smooth, provided certain finite-dimensional obstructions vanish. For metrics with spherical Cauchy data, these obstructions become invariants of the differentiable structure. These results follow from showing that, modulo cohomology at the linear level, 

{
\footnotesize
}

%%%%%%%%%%%%%%%%%%%%%%%%%%%%%%%%%%%%%%%%%%%%%
\section{Introduction}
\subsection{Background} 
\label{sec:cohomology}
On a compact manifold with boundary $M$, satisfying $\dim{M}\geq 3$, consider the map sending a metric to its Ricci curvature tensor,
\beq 
g \mapsto \Ric_{g} : \scrM(M) \rightarrow \SM.
\label{eq:Ricimap} 
\eeq 
Here, $\SM$ stands for symmetric tensor fields, and $\scrM(M) \subset \SM$ for the space of Riemannian metrics. Given a metric-dependent source $\mathrm{T}_g\in\SM$, the \emph{Ricci curvature equations} \cite{Ham84}, for an unknown $g \in \scrM(M)$, are then
\beq
\Ric_{g} = \mathrm{T}_{g}. 
\label{eq:Einz_Hinz}
\eeq
In K\"ahler-Einstein settings, where $\mathrm{T}_g=\lambda g$, global solvability results for these equations, obtained through continuity methods, are among the greatest achievements in geometry (to name but a few, \cite{Yau78,CDS15a,CDS15b,CDS15c,Tia15,JMR16}). In the smooth real case, however, the equations remain much less understood, and results on global solvability are scarce.

In this paper, we establish solvability and uniqueness conditions for linearizations of \eqref{eq:Einz_Hinz} at an arbitrary background metric $g\in\scrM(M)$, which is a necessary first step in studying \eqref{eq:Einz_Hinz} by means of implicit-function-theorem-based arguments. In order not to be confined to a specific linear theory, we allow the incorporation of a tensorial error term $\tGamma:\SM\rightarrow\SM$ into the standard linearization of $g\mapsto\Ric_{g}$, arising, for example, from linearizing a metric-dependent source $g\mapsto \mathrm{T}_{g}$ as in \eqref{eq:Einz_Hinz}, or from a connection on the manifold of metrics $\scrM(M)$ (cf.~\cite[pp.~93, 214]{Ham82}). The resulting linear equations then read, for prescribed $T\in\SM$ and unknown $\sigma \in \SM$,
\beq
\D_{\tGamma}\Ric_{g} \sigma = T, \qquad\text{where }\quad \D_{\tGamma}\Ric_g\sigma:= \dertZero \Ric_{g + t\sigma}+\tGamma\sigma.
\label{eq:Hinz_intro}
\eeq
Although equations of this form have been extensively studied in the real case, much of the existing analysis relies on additional restrictive structural assumptions on $M$, $g$, or $\tGamma$ (e.g., \cite{DeT81,DeT82,DK84,Ham84,Aub98,DG99,And08,AH22,AH24,Hin24,Del25,BEH+25}). Perhaps the most common such assumption, which this paper aims to relax, is that the background metric $g$ is Einstein and that $\tGamma$ is a multiple of the identity, namely,
\beq 
\Ric_{g} = \lambda g, \qquad \tGamma=-\lambda\id, \qquad \lambda\in\bbR. 
\label{eq:Einstein_assumption}
\eeq
One of the main analytic reasons for making these assumptions lies in a closer examination of the natural symmetries and constraints of \eqref{eq:Ricimap}, namely: 
\beq
\Ric_{\phi^* g} = \phi^* \Ric_{g},
\qquad\quad
\delBianchi \mathrm{B}_{g}\Ric_{g} = 0.
\label{eq:gauge_equivariance}
\eeq
Here, $\phi:M\rightarrow M$ is any diffeomorphism; $\delBianchi:\SM\rightarrow\XM$ is the tensor divergence; and $\mathrm{B}_{g}:\SM\rightarrow \SM$ the isomorphism $\mathrm{B}_{g}:=\id - \tfrac{1}{2} \trace_{g} (\cdot)\, g$. 

As has been widely recognized, under the Einstein assumption \eqref{eq:Einstein_assumption}, the linearization of \eqref{eq:gauge_equivariance} yields useful identities for the linearized operators:
\beq
\begin{aligned} 
(\D\Ric_{g}-\lambda\id)\circ\Def  = 0, \qquad \delBianchi\mathrm{B}_{g}\circ(\D\Ric_{g}-\lambda\id) = 0,
\label{eq:Einstein_relation_hinz}
\end{aligned} 
\eeq
where $\Def:\XM\rightarrow \SM$ is the Killing operator, given by $\Def{X}=\tfrac{1}{2}\calL_{X}g$. 

In other words, under \eqref{eq:Einstein_assumption} one can fit \eqref{eq:Hinz_intro} into a cochain complex \cite{Ham84}:
\beq 
\begin{tikzcd}
0 \arrow[r] &[-1.2em]
\XM \arrow[r, "\Def"] &
\SM \arrow[rr, "\D_{\tGamma}\Ric_{g}"] & &
\SM \arrow[r, "\delBianchi\mathrm{B}_{g}"] &
\XM \arrow[r] &[-1.2em]
0
\end{tikzcd}
\label{eq:cochain_complex} 
\eeq 
which, as in classical Hodge theory \cite{Sch95b}, gives rise to \emph{cohomology modules} $\scrE^{\bullet}(M,g,\lambda)$, identified by topologically direct, $L^{2}$-orthogonal decompositions
\beq
\begin{split} 
&\ker (\D_{\tGamma}\Ric_{g})=\image (\Def)\oplus \scrE^{1}(M,g,\lambda),
\\
&\ker (\delta_{g}\mathrm{B}_{g})=\image (\D_{\tGamma}\Ric_{g})\oplus\scrE^{2}(M,g,\lambda).
\end{split} 
\label{eq:cohomology_Hinz}
\eeq
These then translate directly into solvability and uniqueness for \eqref{eq:Hinz_intro}:
%\beq 
%\D\Ric_{g}\sigma-\lambda\sigma=T
%\label{eq:petrubed_einstein} 
%\eeq 
explicitly, solvability holds if and only if $T$ satisfies the compatibility conditions
\[
\delta_{g}\mathrm{B}_{g} T = 0, \qquad T \perp \scrE^{2}(M,g,\lambda),
\]
and uniqueness holds modulo $\scrE^{1}(M,g,\lambda)$ and the gauge freedom $X \mapsto \sigma + \Def{X}$, which can be fixed by imposing the divergence-free gauge, i.e.\ $\delta_{g}\sigma = 0$.

It is sometimes possible to identify the cohomology modules in \eqref{eq:cohomology_Hinz} explicitly; notably, it is shown in~\cite{Hin24} that if $(M,g)$ is a globally hyperbolic spacetime, then 
$\scrE^{2}(M,g,0)\simeq \ker(\Def)$. 
There is also a significant body of work on Einstein manifolds with nonempty boundary (pioneered in \cite{And08}), which are of particular importance, as the natural gauge group (diffeomorphisms that fix the boundary) acts freely in this case, thereby yielding smooth moduli spaces. In this setting, one has $\scrE^{1}(M,g,\lambda)=\{0\}$ under the topological condition $\pi_1(M;\dM)=\{0\}$, once the sequence \eqref{eq:cochain_complex} is adapted to incorporate the natural boundary conditions of the problem:
\[
\PttD\sigma=0, \qquad \D\Ah_{g}\sigma =0,
\]
where $\PttD:\SM\rightarrow\SdM$ is the pullback, and $\D\Ah_{g}:\SM\rightarrow\SdM$ is the linearized second fundamental form, $g\mapsto\Ah_{g}:\scrM(M)\rightarrow\SdM$. 

If the demand $\Ric_{g} = \lambda g$ is relaxed, however, the relations \eqref{eq:Einstein_relation_hinz} fail, the sequence \eqref{eq:cochain_complex} is not a cochain complex, and these analyses break down.

%%%%%%%%%%%%%%%%%%%%%
\subsection{Main results}
\label{sec:main_results_intro}
%%%%%%%
The circumstances laid out above are, in fact, quite ubiquitous in geometry: overdetermined (elliptic) problems in which linearized symmetries and constraints give rise to a sequence of operators that forms a cochain complex under geometric assumptions, but ceases to do so once these assumptions are relaxed. There is a long history of research on such problems, namely compatibility theory for overdetermined systems, BGG complexes, and related areas (see, for example, \cite{Cal61,Gol67,Spe69,DS96,CSS01,AH21,CELM21,Hu26}). The machinery behind these studies, however, is usually available under additional geometric assumptions, such as constant curvature, homogeneity or Kähler geometry. 

To address these circumstances, we developed in \cite{Led25B} a generalized Hodge theory, based on pseudodifferential methods, which canonically \emph{lifts} such sequences so as to recover a cochain complex. The main idea of the theory is to generalize the classical notion of an elliptic complex \cite{AS68} so that it applies more ubiquitously, leading to the notion of an \emph{elliptic pre-complex}.

In the present work, we provide a self-contained, adapted version of the theory so that it applies to the sequence \eqref{eq:cochain_complex}, and achieve the following: the operators are lifted so as to restore the cohomological picture for any $(M,g)$ and any tensorial error term $\tGamma$. Due to the structure of the gauge and boundary operators pertinent to Ricci curvature, the application of the theory turns out to be nontrivial, and involves several geometric arguments. As we shall see below, what we gain in return from this special structure is that the resulting cohomology reflects geometric properties of $(M,g)$ and $\tGamma$.
%\textbf{The lifted complex.} The key lifting result is the following:
%In fact, we work in greater generality, allowing perturbations not only by error terms of the form $-\lambda \id$, as in \eqref{eq:petrubed_einstein}, but by any tensorial endomorphism:
%\beq
%\D_{\Gamma}\Ric_{g} := \D \Ric_{g} + \tGamma, \qquad \tGamma \in \End(\SM).
%\label{eq:latitude}
%\eeq

%In analogy with Hodge theory, we then find that the resulting cohomology modules are isomorphic to kernels of elliptic problems, which paves the way for applying Bochner technique to relate them to geometric data. In this context, we remark that the the incorporation of the error term is not merely because the theory permits it, but because it carries geometric significance both for nonlinear applications and for analyzing the cohomology, as will become clear below.
%With this outline in hand, we shall now list and discuss our key results.
%recall the natural boundary operators for Ricci curvature, constituting the Cauchy data set \cite{And08}: the boundary pullback, $\PttD:\SM\rightarrow\SdM$, and the linearization of the second fundamental form $g\mapsto\Ah_{g}:\scrM(M)\rightarrow\SdM$,
%\[
%\D\Ah_{g}\sigma := \dertZero \Ah_{g+t\sigma}:\SM\rightarrow\SdM.
%\]
%%%%%%%
\begin{theorem}[The lifted complex]
\label{thm:lifting_intro}
Let $(M,g)$ be a compact Riemannian manifold with boundary, and $\Gamma:\SM\rightarrow\SM$ a smooth tensorial map. Then there exist continuous linear maps of Fréchet spaces,
\beq
\pzcD_{\tGamma}\pzcDRic : \SM \to \SM,
\qquad\quad
\pzcdel_{g} : \SM \to \XM,
\label{eq:lifted_operators}
\eeq
uniquely characterized by the following properties:
\beq 
\begin{aligned}
&\;\pzcD_{\tGamma}\pzcDRic \Def = 0
&& \quad \text{on } \quad
&& \XM \cap \ker(\cdot|_{\dM}), \\[1pt]
&\;\pzcD_{\tGamma}\pzcDRic := \D_{\tGamma}\Ric_{g}
&& \quad \text{on } \quad
&& \SM \cap \ker(\delBianchi), \\[3pt]
&\;\pzcdel_{g} \pzcD_{\tGamma}\pzcDRic = 0
&& \quad \text{on } \quad
&& \SM \cap \ker(\PttD, \D \Ah_{g}), \\[1pt]
&\;\pzcdel_{g}\mathrm{B}_{g} := \delBianchi\mathrm{B}_{g}
&& \quad \text{on } \quad
&& \SM \cap \ker(\pzcD_{\tGamma}\pzcDRic^{*}),
\end{aligned}
\label{eq:lifting_relations} 
\eeq   
where $\pzcD_{\tGamma}\pzcDRic^{*}$ denotes the formal $L^{2}$-adjoint of $\pzcD_{\tGamma}\pzcDRic$. 
%%%%%%%
\end{theorem}
%%%%%%%%%%%%%%%%%%%%%%%%

Thus, for any $g,\tGamma$, there is a canonical cochain complex generalizing \eqref{eq:cochain_complex}:
\beq 
\begin{tikzcd}
0 \arrow[r] &[-1.2em]
\XzM \arrow[r, "\Def"] & \StM \arrow[rr, "\pzcD_{\Gamma}\pzcDRic"] & &[-1em] \SM \arrow[r, "\pzcdel_{g}\mathrm{B}_{g}"] & \XM  \arrow[r] &[-1.2em] 0,
\end{tikzcd}
\label{eq:lifted_complex} 
\eeq
where $\XzM:=\XM\cap\ker(\cdot|_{\dM})$ and $\StM:=\SM\cap\ker(\PttD, \D\Ah_{g})$. Indeed, in light of the identities \eqref{eq:Einstein_relation_hinz}, if one restricts to $\Ric_{g} = \lambda g$ and $\Gamma=-\lambda\id$, the characterizing relations \eqref{eq:lifting_relations} are trivially satisfied by
\beq
\pzcD_{\Gamma}\pzcDRic = \D\Ric_{g}-\lambda\id
\qquad \text{and} \qquad
\pzcdel_{g} = \delBianchi.
\label{eq:retrieve_when_Ein_0}
\eeq
Consequently, by the uniqueness clause in \thmref{thm:lifting_intro}, the lifted and original operators coincide in this case, and so do the complexes \eqref{eq:lifted_complex} and \eqref{eq:cochain_complex}. 

Most notably, \thmref{thm:lifting_intro} holds without structural assumptions. At this level of generality, although the lifted operators may be nonlocal, their specific characterization in \thmref{thm:lifting_intro} allows us to retain solvability and uniqueness for \eqref{eq:Hinz_intro}, once it is supplemented by its natural gauge and boundary conditions \cite{And08}; namely, for the overdetermined boundary-value problem:
\beq
\begin{aligned}
& \D_{\Gamma}\Ric_{g}\sigma = T, \qquad
&& \delBianchi \sigma = 0
\qquad && \text{in } M,
\\
& \PttD \sigma = 0, \qquad
&& \D\Ah_{g}\sigma = 0
\qquad && \text{on } \dM.
\end{aligned}
\label{eq:Einstein_boundary_opening}
\eeq
Before the statements, we make a technical remark which, although important, is not strictly required for following the results presented here: as a by-product of the functional-analytic characterization \eqref{eq:lifting_relations}, the operators \eqref{eq:lifted_operators} in fact belong to a pseudodifferential calculus of boundary-value problems, and differ from the original ones by zero-order terms in this calculus \cite{Bou71,RS82,Gru96}. This also explains the terminology ``lifting'', which is borrowed from other approaches to generalizing Hodge theory in a similar spirit, though in a manner different from ours \cite{KTT07,Wal15,SS19}.

\textbf{Solvability and uniqueness.}  We begin with the uniqueness result:
%%%%%%%%
\begin{theorem}[First cohomology -- Uniqueness]
\label{thm:Saint_Venant_opening}
In the setting of \thmref{thm:lifting_intro}, denote the kernel of the boundary value problem \eqref{eq:Einstein_boundary_opening} by
\beq 
\scrE^{1}(M,g,\tGamma)
:=
\ker(\D_{\tGamma}\Ric_{g},\delBianchi,\PttD, \D\Ah_{g})
\subset \StM.
\label{eq:explicit_1_intro}
\eeq 
Then $\scrE^{1}(M,g,\tGamma)$ is finite-dimensional, and there is a topologically direct, $L^{2}$-orthogonal decomposition: 
\[
\ker(\pzcD_{\tGamma}\pzcDRic|_{\StM})=\image(\Def|_{\XzM})\oplus \scrE^{1}(M,g,\tGamma).
\]
\end{theorem}
%%%%%%%%%%%%
We also take a moment to record a consequence of independent interest:
%%%%%%%%%%%%%%%%%%%%%%%
\begin{theorem}[Natural gauge compatibility]
\label{thm:Saint_Venant_opening2}
In the setting of \thmref{thm:Saint_Venant_opening}, given $\sigma\in \SM$, the system of equations 
\[
\begin{aligned}
&  \calL_{X}g= \sigma && \quad \text{in } \quad M.
\end{aligned}
\]
admits a solution $X\in\frakX(M)$ satisfying $X|_{\dM}=0$ if and only if 
\[
\begin{gathered}
\pzcD_{\tGamma}\pzcDRic \sigma = 0, \qquad
\PttD \sigma = 0, \qquad
\D\Ah_{g} \sigma = 0, \qquad
\sigma\,\bot\,\scrE^{1}(M,g,\tGamma).
\end{gathered}
\]
\end{theorem}
%%%%%%%%%%%%%%%%%
In view of \eqref{eq:retrieve_when_Ein_0}, this recovers the known unique continuation results obtained for Einstein manifolds \cite{And08,AH22}. From a broader perspective, however, what \thmref{thm:Saint_Venant_opening2} provides is an explicit compatibility condition for the range of the Killing operator when restricted to vector fields that vanish on the boundary. This range constitutes not only the natural gauge group for the linearized problem at hand, but also for many linear geometric inverse problems \cite{PSU23}. In this generality, our result is, to the best of our knowledge, new. Below, we further sharpen this compatibility condition by identifying conditions on the boundary under which $\scrE^{1}(M,g,\tGamma)=\{0\}$.
%%%%%%%%%%%%%%%%%

So far, both the construction of the lifted complex and uniqueness results have been obtained without assumptions on the error term $\tGamma$. To obtain useful solvability conditions, however, the presence of the boundary conditions in \eqref{eq:Einstein_boundary_opening} necessitates compatibility conditions on the boundary as well.

To recognize these, following the same paradigm of exploiting linearized geometric constraints we have undertaken thus far, we turn to the Einstein constraint equations on the boundary (e.g., \cite{And08,Hin24}),
\[
\PnD_{g}\mathrm{B}_{g}\Ric_{g}=(\mathrm{Sc}_{\gD} - |\Ah_{g}|_{\gD}^{2} + (\trace_{\gD} \Ah_{g})^2,\delta_{\gD} \Ah_{g} + d\trace_{\gD}\Ah_{g}),
\]
where $\gD:=\PttD g$ is the pullback metric, $\mathrm{Sc}_{\gD}$ is its intrinsic scalar curvature, and $\PnD_{g}: \SM|_{\dM} \to \XM|_{\dM}$ denotes contraction with the inward unit normal induced by $g$  on the boundary. By linearizing these, we find that, in order to refine the complex \eqref{eq:lifted_complex} further into
\beq
\begin{tikzcd}
0 \arrow[r] &[-1.2em]
\XzM \arrow[r, "\Def"] & \StM \arrow[rr, "\pzcD_{\Gamma}\pzcDRic"] & &[-1em] \SnM \arrow[r, "\pzcdel_{g}\mathrm{B}_{g}"] & \XM  \arrow[r] &[-1.2em] 0,
\end{tikzcd}
\label{eq:lifted_complex2} 
\eeq
where $\SnM = \SM \cap \ker(\PnD_{g}\mathrm{B}_{g})$, we must require an extra condition on $\tGamma$:
\beq 
\PnD_{g}\mathrm{B}_{g}\D_{\tGamma}\Ric_{g}\sigma=0, \qquad \text{for } \qquad \sigma\in\ker(\PttD,\D\Ah_{g}),
\label{eq:compatibilityRic}
\eeq 
We emphasize that the demand \eqref{eq:compatibilityRic} does not impose any restriction on the geometry of $g$: indeed, by expanding and comparing with the linearized constraint equations, one finds that the left-hand side reduces to a tensorial expression involving only $\sigma|_{\dM}$ and $\Ric_{g}|_{\dM}$, and so $\tGamma$ can be constructed to be arbitrary in the interior while satisfying \eqref{eq:compatibilityRic} by a smooth extension argument. We will elaborate on the significance of this both for nonlinear applications and for analyzing the cohomology below. 

%The condition \eqref{eq:compatibilityRic} is equivalent to the following demand: $\tGamma$ may remain arbitrary in the interior of $M$, but satisfies on the boundary
%\beq 
%\begin{aligned} 
%\PnnD_{g}\mathrm{B}_{g}\tGamma\sigma|_{\dM}
%&= \dertZero (\PnnD_{g+t\sigma}\mathrm{B}_{g+t\sigma}\Ric_{g}|_{\dM}),
%\\
%\PntD_{g}\mathrm{B}_{g}\tGamma\sigma|_{\dM}
%&=\dertZero(\PntD_{g+t\sigma}\mathrm{B}_{g+t\sigma}\Ric_{g}|_{\dM}).
%\end{aligned} 
%\label{eq:vanishing_Ric} 
%\eeq
%We emphasize that this does not impose any restriction on the geometry of $g$ nor on the interior behavior of the error term $\tGamma$: by a smooth extension argument, since \eqref{eq:vanishing_Ric} depends only on the pointwise values of $\sigma|_{\dM}$ and $\Ric_{g}|_{\dM}$, $\tGamma$ can always be constructed to satisfy \eqref{eq:vanishing_Ric} while remaining arbitrary in the interior.

%, as the expression on the right differentiates neither $\sigma$ nor $\Ric_{g}$It is satisfied trivially, for example, when $\Ric_{g}|_{\dM}=0$ and $\tGamma|_{\dM}=0$, but

Analogously to \thmref{thm:Saint_Venant_opening}, we then show that the second cohomology of the complex \eqref{eq:lifted_complex2} governs the solvability conditions of \eqref{eq:Einstein_boundary_opening}, and is isomorphic to the kernel of a boundary value problem—albeit a nonlocal one. For the claim, let $\tbP_{g} \colon \SM \rightarrow \SM$ denote the $L^{2}$-orthogonal projection onto $\ker\delBianchi$ (cf.~\cite{PSU23}, where it is referred to as the space of solenoidal fields).
%We remark that the boundary restrictions \eqref{eq:vanishing_Ric} can likely be relaxed, albeit at the cost of some technical complications. Nevertheless, the condition $\Ric_{g}|_{\dM}=\lambda g|_{\dM}$ is much weaker than requiring $\Ric_{g}=\lambda g$ throughout $M$: metrics satisfying \eqref{eq:vanishing_Ric} may exhibit arbitrary interior geometries; and, when such a metric exists, we prove that $\scrM(M)$ admits infinite-dimensional submanifolds consisting entirely of such metrics (cf.~\propref{prop:submanifolds}).

%%%%%%%%%%%
\begin{theorem}[Second cohomology -- Solvability]
\label{thm:Einstein_boundary_opening}
In the setting of \thmref{thm:lifting_intro}, assume $\tGamma$ satisfies \eqref{eq:compatibilityRic}, and consider the kernel 
\beq 
\scrE^{2}(M,g,\tGamma)
:=
\ker(\delta_{g}, \tbP_{g}\mathrm{B}_{g}\D_{\ttGamma}\Ric_{g}, \PnD_{g}), \qquad \ttGamma:=\mathrm{B}_{g}^{-1}\tGamma^*\mathrm{B}_{g},
\label{eq:explicit_2_intro}
\eeq
Then there is a topologically direct, $L^{2}$-orthogonal decomposition: 
\beq 
\ker(\pzcdel_{g}|_{\SnM})=\mathrm{im}(\mathrm{B}_{g}\pzcD_{\tGamma}\pzcDRic|_{\StM})\oplus\scrE^{2}(M,g,\tGamma). 
\label{eq:E2iso} 
\eeq 
That is, given $T\in \SM$, \eqref{eq:Einstein_boundary_opening} has a solution $\sigma\in \SM$ if and only if
\[
\pzcdel_{g}\mathrm{B}_{g}T = 0, \qquad
\PnD_{g} \mathrm{B}_{g}T = 0, \qquad
\mathrm{B}_{g}T\,\bot\,\scrE^{2}(M,g,\Gamma).
\]
\end{theorem}

\textbf{Bochner technique.} The explicit expressions \eqref{eq:explicit_1_intro}--\eqref{eq:explicit_2_intro} for the cohomology modules are not incidental. Their derivation relies on structure specific to Ricci curvature: namely, the formal self-adjointness of $\mathrm{B}_{g}\D\Ric_{g}$, a feature also prominent in other studies \cite{Hin24}, and the fact that the restriction to the natural gauge, namely to $\ker\delta_{g}$, appears in both the domain and the codomain of the problem. As alluded to above, this structure, in turn, is what paves the way to apply a {Bochner technique} to these cohomology modules, completing the analogy with classical Hodge theory \cite{Pet16,PW21}.

In the body of the paper, we derive explicit Bochner formulae for elements of both cohomology modules. Here, for expository reasons, we record only the consequences: vanishing theorems that, under specific choices of error terms $\tGamma$, reduce to nothing but boundary curvature assumptions. In the statements, let $\calR_{g}:\SM\rightarrow\SM$ denote the Lichnerowicz curvature.
%%%%%%%%%%%%%%%%%%%%
\begin{theorem}[Vanishing first cohomology]
\label{thm:vanishing_intro}
In the setting of \thmref{thm:Saint_Venant_opening}, assume that the boundary is convex ($\mathrm{A}_{g}\leq 0$), that $\tGamma$ is symmetric, and that
\[
\calR_{g}+\,\bar{\tGamma}\geq 0,
\qquad \text{where} \qquad
\bar{\tGamma}\sigma
:=
2\,\tGamma\sigma
+(\trace_{g}\tGamma\sigma)\,g.
\]
Then every $\sigma\in\scrE^{1}(M,g,\Gamma)$ is parallel, and $\sigma=0$ if either: 
\begin{enumerate}
\item $(\calR_{g}+\,\bar{\tGamma})|_{p}> 0$ for some $p\in M$,
\item $(\trace_{\gamma}\mathrm{A}_{g})|_{q}\neq 0$ for some $q\in \dM$. 
\end{enumerate}
\end{theorem}
%%%%%%%%%%%
In particular, one may choose $2\,\tGamma = -\calR_{g}$ to ensure that, for strictly convex nonempty boundaries, $\scrE^{1}(M,g,\tGamma) = \{0\}$, thereby sharpening \thmref{thm:Saint_Venant_opening2} accordingly. This demonstrates how a nonempty boundary and the freedom in choosing $\Gamma$ can be instrumental for identifying the cohomology.

The Bochner technique for the second cohomology is both more delicate and geometrically richer, not least because of the presence of non-local terms in \eqref{eq:explicit_2_intro}. To retain a manageable Bochner formula, we demonstrate that it is necessary to impose additional conditions on $\tGamma$ and elements of $\scrE^{2}(M,g,\tGamma)$, that do not restrict the interior geometry of $g$. We first present these conditions and then state the theorem.

The first condition we need is to restrict to elements $\eta\in\scrE^{2}(M,g,\tGamma)$ with constant trace, i.e., $d\trace_{g}\eta=0$. This condition extends the traceless gauge, which is central not only in the study of linearized Ricci curvature (see, e.g., \cite{And08}; \cite{KS26}, where a cohomological framework is built around this gauge under vanishing curvature; and \cite{BEH+25}, where it is used to prove vanishing theorems), but also in other geometric areas \cite{FT84,PRS08,PSU23}.

The second condition is to impose an additional boundary condition on the
linearized \emph{magnetic part} of the Weyl tensor \cite{CK93}:
\beq 
\PtnG\D\mathrm{Wey}_{g}\eta:=\dertZero\PtnG\mathrm{Wey}_{g+t\eta}=0,
\label{eq:Weyl}
\eeq 
where $\PtnG$ is the tangential-normal boundary projection of an algebraic curvature tensor. We remark that this
condition is satisfied trivially in three dimensions, since
$\mathrm{Wey}_{g}=0$ in that case, and is vacuous when the boundary is empty.
We discuss it and the constant-trace gauge further below.

The third condition is to restrict attention to a tensorial error term $\tGamma$ that satisfies, in addition to \eqref{eq:compatibilityRic}, the additional boundary identity:
\beq 
\begin{split} 
&\PttD\deltag\mathrm{B}_{g}\D_{\tGamma}\Ric_{g}\eta=0, \qquad \eta\in\SM\cap\ker(\deltag,d\trace_{g},\PnD_{g}).
\end{split} 
\label{eq:tgamma_condition} 
\eeq 
As in \eqref{eq:compatibilityRic}, this time by comparing with the linearized Bianchi identity, it is shown that the left-hand side of \eqref{eq:tgamma_condition} depends only on $\Ric_{g}|_{\dM}$ and tangnetial derivatives of $\tGamma$, and thus again imposes no restriction on the geometry of $g$ or on the interior behavior of $\tGamma$. 

The final condition is to assume that, when nonempty, $(\partial M,\gamma)$ is a round sphere, and that its second fundamental form satisfies $\Ah_{g} = -\sqrt{\kappa}\, \gD$ for $\kappa > 0$. Under these assumptions, the vanishing theorem becomes:
%%%%%%%%%%%%%%%%
\begin{theorem}[Vanishing second cohomology]
\label{thm:vanishing_intro2}
In the setting of \thmref{thm:Einstein_boundary_opening}, assume that $\tGamma$ is symmetric, satisfies \eqref{eq:tgamma_condition}, and that
\beq 
\calR_{g} + 2\,\tGamma \geq 0, \qquad \trace_{\gD}\PttD(\tGamma+\tfrac{1}{4}\calR_{g})=0.
\label{eq:vanishingCon2} 
\eeq 
Let $\eta\in\scrE^{2}(M,g,\Gamma)$ satisfy $d\trace_{g}\eta=0$ and $\PtnG\D\mathrm{Wey}_{g}\eta=0$. Then $\eta=0$ in each of the following cases:
\begin{enumerate}
\item $\Ah_{g} = -\sqrt{\kappa}\, \gD$ for $\kappa > 0$ and $(\dM,\gamma)$ is a round sphere. 
\item $\dM=\emptyset$ and $(\calR_{g} + 2\,\tGamma)|_{p}> 0$ for some $p\in M$. 
\end{enumerate} 
\end{theorem}
%%%%%%%%%
As in \thmref{thm:vanishing_intro}, we remark that $\tGamma$ can always be chosen so that \eqref{eq:compatibilityRic}, \eqref{eq:tgamma_condition}, and \eqref{eq:vanishingCon2} are satisfied trivially. Indeed, these conditions do not fundamentally contradict one another, as the expressions in \eqref{eq:tgamma_condition} depend only on derivatives of $\tGamma$, and thus do not in themselves conflict with the requirements in \eqref{eq:vanishingCon2} and \eqref{eq:compatibilityRic}, which involve contractions of different components on the boundary. To illustrate this, if $\Ric_{g}|_{\dM}=0$, which is compatible with the fact that $(\dM,\gamma)$ is round, then both \eqref{eq:compatibilityRic} and the right-hand condition in \eqref{eq:vanishingCon2} are satisfied trivially by taking $\tGamma|_{\dM}=0$.

In a nutshell, the reason we require this collective set of assumptions is to reduce the nonlocal constraints in \eqref{eq:cohomology_2} to a condition of the form $d_{\gamma}\xi=0$, where $\xi\in \SdM$ is constructed from $\eta$, and $d_{\gamma}$ is the exterior covariant derivative acting on $\SdM$. This reduction is highly nontrivial and is based on comparing Weitzenb\"ock identities with linearized constraints and vector potentials provided by the nonlocal terms. The simple connectivity of the boundary is then needed to represent tensors satisfying $d_{\gamma}\xi=0$ by scalar potentials, as well as to enable a holonomy argument.
%%%%%%%%%%%%%%%%%%%%%
\subsection{Outlook and future directions}
\label{sec:outlook}
From the perspective of the utility of our results in the nonlinear setting, we revisit the natural interpretation of the error term $\tGamma$: namely, the case in which it arises from a connection on the manifold of metrics $\scrM(M)$ (\cite[pp.~93, 214]{Ham82}, \cite{Ebi70,GM91}), 
\[
\Gamma\sigma:=\Gamma_{g}(\Ric_{g},\sigma), \qquad \Gamma:\scrM(M)\times\SM\times\SM\to\SM.
\] 
In this case, the operator $\D_{\tGamma}\Ric_{g}$ is nothing but the covariant derivative of the vector field $g \mapsto \Ric_{g}$. We expect the incorporation of a connection to be useful for the nonlinear problem for several reasons. The first is that the covariant formulation extends the scope of the inverse function theorem (cf.~\cite[Thm.~3.34]{Ham82}). The second is that, as demonstrated in \thmref{thm:vanishing_intro}--\thmref{thm:vanishing_intro2}, when chosen appropriately, the error term can eliminate altogether the need for interior assumptions. This is expected to be useful in showing that the vanishing of an obstruction at the linear level leads to smooth moduli spaces, and also in setting up continuity paths for \eqref{eq:Einz_Hinz}.

We also record a geometric interpretation of the condition $d\trace_{g}\eta = 0$ in \thmref{thm:vanishing_intro2}: it corresponds, at the linearized level, to considering Ricci curvature as acting on metrics whose volume form satisfies $d\Volume_{g}=c\mu$ for some $c>0$, where $\mu$ is a fixed volume form. It appears that our framework can be adapted to this gauge, so that \thmref{thm:vanishing_intro2} takes the form of a vanishing theorem for the resulting cohomology. In this context, the extra boundary condition on the magnetic part of the Weyl tensor in \eqref{eq:Weyl} also suggests that, in higher dimensions and for nonempty boundary, the framework may require a corresponding adaptation in order to obtain vanishing cohomology. Both of these will be pursued in future work.
%%%%%%%%
\subsection{Structure of the paper} 
In \secref{sec:Hodge}, we survey the required elements of the generalized Hodge theory. We include an appendix (\appref{app:Construction}) providing adapted, self-contained proofs, which are not strictly required to follow the geometric narrative of the paper. In \secref{sec:TecSetup}, we derive the geometric formulae and identities required for the analysis. In \secref{sec:Hodge_Einstein}, we cast linearized Ricci curvature within the scope of the Hodge theory, and prove the cohomological solvability and uniqueness results. In \secref{sec:uniqueness_result}, we derive the Bochner formulae and prove the vanishing theorems. In \secref{sec:duality_for_curvature}, we establish a duality relation between the Einstein and Riemann curvature tensors, which is used throughout the paper.
%%%%%%%%
\subsection*{Acknowledgments}
I thank my advisor, Raz Kupferman, for his guidance and constructive comments; Or Hershkovits for his encouragement and remarks; Deane Yang for his feedback and for the suggestion to apply generalized Hodge theory to Ricci curvature; and Zhongshan An, Rotem Assouline, Louis-Brahim Beaufort, Tristan Humbert, Thibault Lefeuvre, Rafe Mazzeo, Pierre Pansu, Jake Solomon and Shira Tanny for helpful discussions.

This project was supported by the Israel Science Foundation, Grant No.~560/22, and by an Azrieli Graduate Fellowship from the Azrieli Foundation.
\section{Generalized Hodge theory}
\label{sec:Hodge} 
%%%%%%%%
Here we survey the generalized Hodge theory developed in \cite{KL23,Led25B}. We do not require its full generality; we review a prototypical, self-contained framework applicable to the linearized Ricci curvature equations.

The theory is based on the pseudodifferential calculus of boundary value problems—namely, the Boutet de Monvel calculus, whose key properties we briefly review below \cite{Bou71,Gru96,RS82}. As remarked in the introduction, there are other approaches to generalizing Hodge theory within the Boutet de Monvel calculus~\cite{KTT07,Wal15,SS19}; see the comparison carried out in~\cite[Ch.~3.2.5]{Led25B} for details.
%%%%%%%%%%%%%%
\subsection{The Boutet de Monvel calculus}
\label{sec:OD}
Here we review the pseudodifferential calculus of boundary value problems, namely the Boutet de Monvel calculus of Green operators, on which our theory relies. We do not require its full details, and therefore keep the presentation as concise as possible, referring to our survey~\cite[Ch.~2.2]{Led25B} and the texts \cite{Bou71,RS82,Gru96}.

The standard pseudodifferential calculus (e.g., \cite[Ch.~7]{Tay11b}), defined on every smooth manifold $M$ with empty boundary, consists of maps between sections of vector bundles $\bbE,\bbF\rightarrow M$:
\[
P:\Gamma(\bbE)\rightarrow\Gamma(\bbF),
\]
and is designed to preserve adjunction, composition, and inversion whenever these operations are well defined, starting from differential operators and building upward from there. Due to the nonlocal nature of the calculus, and the fact that it is based on the Fourier transform on $\bbR^{n}$, it does not directly adapt to manifolds with boundary. The Boutet de Monvel calculus of Green operators thus provides a framework to extend it, in the sense that, when the boundary is empty, the standard pseudodifferential calculus is recovered.

In essence, the construction of the Boutet de Monvel calculus proceeds as follows: one begins with a restricted class of pseudodifferential operators, defined on the double of a given manifold, that truncate appropriately to the manifold with boundary; these are the operators with the \emph{transmission property}. The shortcoming of this class is that compositions of truncations are not necessarily themselves truncations, and that boundary operators must also be incorporated to account for boundary conditions. One therefore introduces additional terms into the calculus, ultimately arriving at the notion of a \emph{Green operator}, which is a map between sections of vector bundles over the manifold with boundary, so as to retain the algebra structure. Although this is the general idea, the construction is technical and nontrivial.

Every Green operator is characterized by two parameters: an order $m\in\bbR$, quantifying the number of interior derivatives, and a class $r\in\mathbb{Z}$, quantifying the number of boundary normal derivatives. These parameters, in turn, determine continuous $W^{s,2}$ Sobolev extensions of section spaces over vector bundles. In particular, an operator of order and class zero extends to a continuous map between $L^2$-sections, whereas every operator $A$ of class zero admits an adjoint $A^*$.

Although the Boutet de Monvel calculus involves many technicalities, it suffices for the sake of this paper to review a small number of definitions and facts. We begin with the following notions \cite[Ch.~1.4--1.6]{Gru96}: 

%%%%%%%%%%%%%%%%%%%
\begin{definition}[Trace operator]
\label{def:trace}
Let $\bbE \to M$ be a smooth vector bundle, $g:\bbE\oplus\bbE\rightarrow\bbR$ a fiber metric, and $\nabg$ a connection. Let $\dr\in\frakX(U)$ denote the inward-pointing unit normal, given by the gradient of the distance function $r: U\rightarrow\bbR_{+}$ from the boundary, defined in a collar $U \supset \dM$. The $k$-trace operator is the map
\[
\Dng{k} : \Gamma(\bbE) \to \Gamma(\bbE|_{\dM})
\quad\text{given by}\quad
\Dng{k} \sigma = \big((\nabg_{\dr})^{k} \sigma \big)|_{\dM}.
\]
In particular, $\Dng{0} =(\cdot)|_{\dM}$; we use either notation as convenient.
\end{definition}
%%%%%%%%%%%%%%%
The following is then taken directly from \cite[Def.~1.4.3]{Gru96}:
%%%%%%%%%%%%%%%%%
\begin{definition}[Normal system of trace operators]
\label{def:normal_system}
Let $\bbE \rightarrow M$ and $\bbJ = \bigoplus_{j=1}^{r} \bbJ_{j} \rightarrow \dM$ be vector bundles. An operator 
$B : \Gamma(\bbE) \rightarrow \Gamma(\bbJ)$ is called a system of trace operators of class $r\in\Nzero$ if it is of the form
\[
B = \bigoplus_{j=1}^{r} B_{j}, \qquad 
B_{j} = \sum_{k=1}^{j} S_{kj}\Dng{k-1}+Q_{j},
\]
where $S_{kj} : \Gamma(\bbE|_{\dM}) \rightarrow \Gamma(\bbJ_{j})$ are pseudodifferential operators of order $j-k$, and $Q_{j}$ is a Green operator of order and class strictly lower than $j-1$. The operator $B$ is called a normal system of trace operators if the diagonal coefficients $S_{jj}$ are surjective.
\end{definition}
%%%%%%%%%%%%%%%%
We then have \cite[Prop.~1.6.5--1.6.8]{Gru96}: 
%%%%%%%%%%%%%%%%
\begin{proposition}
\label{prop:normality}
Let $B$ be a normal system of trace operators. Then:
\begin{itemize}
\item $B$ is surjective onto $\Gamma(\bbJ)$, and likewise onto its Sobolev completions.
\item $B$ admits a smooth right inverse within the calculus.
\item $\ker B$ is $L^{2}$-dense in $\Gamma(\bbE)$.
\end{itemize}
\end{proposition}
%%%%%%%%%%%%%%%%

The second notion we will need is that of \emph{overdetermined ellipticity}. To state it, we note that the direct sum of a system of boundary operators $B:\Gamma(\bbE)\rightarrow\Gamma(\bbJ)$ of class $r$ with two interior operators of class zero, possibly of different orders $m_1$ and $m_2$,
\[
A_{i}:\Gamma(\bbE)\to\Gamma(\bbF_{i}), \qquad i\in\BRK{1,2},
\]
yields an operator within the calculus
\[
A_1\oplus A_2\oplus B:\Gamma(\bbE)\rightarrow\Gamma(\bbF_1)\oplus\Gamma(\bbF_2)\oplus\Gamma(\bbJ),
\]
which is the prototypical example of a Douglis--Nirenberg system, that is, a boundary-value problem of varying orders~\cite{DN55, Gru90}. A key property that such systems may possess is overdetermined ellipticity, which is a generalization of the classical notion of ellipticity, in which the symbol, adapted to incorporate boundary symbols and varying orders, is required to be injective rather than bijective.

To avoid delving into the machinery of symbol calculus, in this paper we use the fact that at the functional-analytic level, overdetermined ellipticity is equivalent to the existence of a continuous Sobolev extension that is semi-Fredholm \cite[Ch.~4.5]{Kat80}, i.e., one satisfying an a priori estimate for \emph{some} Sobolev norms:
\beq 
\|\psi\|_{s} \leq C \bigl(\|A_1 \psi\|_{s-m_{1}} + \|A_2 \psi\|_{s-m_{2}} +\sum_{j=0}^{r} \|B_j \psi\|_{s - j - 1/2} + \|\psi\|_0 \bigr),
\label{eq:Sobolev_estimate}
\eeq
where $s \in \Nzero$ is a sufficiently large Sobolev exponent.

Once such an estimate holds for some $s\in\Nzero$, it can be shown to hold for every $s> r-\tfrac{1}{2}$, and that the boundary value problem not only has a finite-dimensional kernel of smooth sections, but also admits a left inverse within the calculus. The choice of different norms on each component in \eqref{eq:Sobolev_estimate} is classically referred to as the choice of weights~\cite{DN55}. See~\cite[Ch.~2.1.2 \& Ch.~2.2.4]{Led25B} and~\cite[Sec.~2]{KL23} for further elaboration.

\subsection{Classical Hodge theory}The essential components of classical Hodge theory for the \emph{de Rham complex} were recognized long ago, later generalized to sections of vector bundles over compact manifolds with boundary, and unified within the framework of \emph{elliptic complexes} \cite{AS68}. Since this is a classical subject, we recall only the elements needed here, following the exposition in \cite[Ch.~12.A]{Tay11b} and our own review in \cite{Led25B}.

The components can be summarized in the following diagram:
\beq
\begin{xy}
(-30,0)*+{0}="Em1";
(-30,-20)*+{0}="Gm1";
(0,0)*+{\Gamma(\E_0)}="E0";
(30,0)*+{\Gamma(\E_1)}="E1";
(60,0)*+{\Gamma(\E_2)}="E2";
(90,0)*+{\Gamma(\E_3)}="E3";
(100,0)*+{\cdots}="E4";
(0,-20)*+{\Gamma(\bbJ_0)}="G0";
(30,-20)*+{\Gamma(\bbJ_1)}="G1";
(60,-20)*+{\Gamma(\bbJ_2)}="G2";
(90,-20)*+{\Gamma(\bbJ_3)}="G3";
(100,-20)*+{\cdots}="G4";
{\ar@{->}@/^{1pc}/^{A_0}"E0";"E1"};
{\ar@{->}@/^{1pc}/^{A_0^*}"E1";"E0"};
{\ar@{->}@/^{1pc}/^{A_1}"E1";"E2"};
{\ar@{->}@/^{1pc}/^{A_1^*}"E2";"E1"};
{\ar@{->}@/^{1pc}/^{A_2}"E2";"E3"};
{\ar@{->}@/^{1pc}/^{A_2^*}"E3";"E2"};
{\ar@{->}@/^{1pc}/^0"Em1";"E0"};
{\ar@{->}@/^{1pc}/^0"E0";"Em1"};
{\ar@{->}@/_{0pc}/^{B_0}"E0";"G0"};
{\ar@{->}@/_{0pc}/^{B_1}"E1";"G1"};
{\ar@{->}@/_{0pc}/^{B_2}"E2";"G2"};
{\ar@{->}@/^{1pc}/^{B^*_0}"E1";"G0"};
{\ar@{->}@/^{1pc}/^{B^*_1}"E2";"G1"};
{\ar@{->}@/^{1pc}/^0"E0";"Gm1"};
{\ar@{->}@/^{1pc}/^{B^*_2}"E3";"G2"};
{\ar@{->}@/_{0pc}/^{B^*_3}"E3";"G3"};
\end{xy}
\label{eq:elliptic_complex_diagram}
\eeq
where $\E_\alpha \to M$ and $\bbJ_\alpha \to \dM$ are sequences of vector bundles over the interior and the boundary, respectively, and the operators acting on the section spaces are, classically, differential operators (possibly of different orders). These are related to one another through \emph{Green's formulae}, valid for any $\psi \in \Gamma(\E_\alpha)$ and $\eta \in \Gamma(\E_{\alpha+1})$:
\beq
\bra A_\alpha\psi, \eta \ket_{L^2(M)} = \bra \psi, A_\alpha^*\eta \ket_{L^2(M)} + \bra B_\alpha\psi, B^*_\alpha\eta \ket_{L^2(\dM)},
\label{eq:Green_elliptic_intro}
\eeq
where $\bra \cdot, \cdot \ket_{L^2(M)}$ and $\bra \cdot, \cdot \ket_{L^2(\dM)}$ denote the $L^2$-pairings of interior and boundary sections, respectively, with respect to chosen Riemannian fiber metrics and volume forms. In the future, we omit these subscripts when there is no ambiguity. Also note that, although $A_{\alpha}^*$ is indeed the formal adjoint of
$A_{\alpha}$, $B_{\alpha}^*$ is not the formal adjoint of $B_{\alpha}$; we
abuse notation here.

\begin{definition}[Elliptic complex]
\label{def:elliptic_complex}
A diagram \eqref{eq:elliptic_complex_diagram} is called an \emph{elliptic complex} if the following are satisfied:
\begin{enumerate}[itemsep=0pt,label=(\alph*)]
\item The interior operators satisfy $A_{\alpha+1} A_{\alpha}=0$; that is, $(A_{\bullet})$ is a cochain complex.
\item The boundary-value problem associated with $(D_\alpha^*D_\alpha, T_\alpha)$ is elliptic, where $D_\alpha = A_{\alpha-1}^* \oplus A_\alpha$, so that $D_\alpha^*D_\alpha$ is the ``Laplacian":
\[
D_\alpha^*D_\alpha = A_{\alpha-1}A_{\alpha-1}^* + A^*_\alpha A_\alpha,
\]
and where $T_\alpha$ is a boundary operator defined by one of the following:
\begin{itemize}
\item[$(\NN)$] $T_\alpha = B^*_{\alpha-1} \oplus B^*_\alpha A_\alpha$;
\item[$(\DD)$] $T_\alpha = B_{\alpha} \oplus B_{\alpha-1}A^*_{\alpha-1}$.
\end{itemize}
\label{eq:classical_ellipticity}
\end{enumerate}
\end{definition}
Under these conditions, the elliptic complex is said to satisfy generalized \emph{Neumann conditions} (usually termed \emph{absolute}, due to their classical correspondence with absolute cohomology) in the case of $(\NN)$, and generalized \emph{Dirichlet conditions} (similarly termed \emph{relative}, due to the classical correspondence with relative cohomology) in the case of $(\DD)$.

The main result concerning elliptic complexes is an $L^{2}$-orthogonal, topologically direct decomposition of Fréchet spaces, the \emph{Hodge decomposition}. For Dirichlet conditions $(\DD)$, the decomposition takes the form:
\beq
\Gamma(\bbE_{\alpha+1})= 
\lefteqn{\overbrace{\phantom{\image(A_{\alpha}|_{\ker B_{\alpha}})\oplus \module_{\DD}^{\alpha+1}}}^{\ker(A_{\alpha+1}|_{\ker B_{\alpha+1}})}} \image(A_{\alpha}|_{\ker B_{\alpha}}) \oplus \underbrace{\module_{\DD}^{\alpha+1}\oplus \image(A^*_{\alpha+1})}_{\ker(A_{\alpha}^*)}
\label{eq:Hodge_intro_dirichlet}
\eeq
where the finite-dimensional space $\module_{\DD}^{\alpha+1} := \ker(A_{\alpha+1}, A_\alpha^*, B_{\alpha+1})$ satisfies
\beq 
\module_{\DD}^{\alpha+1} \simeq \ker(A_{\alpha+1}|_{\ker B_{\alpha+1}})\big/\image(A_{\alpha}|_{\ker B_{\alpha}}) \simeq \ker(A^*_{\alpha})\big/\image(A^*_{\alpha+1}).
\label{eq:cohomology_hodge_classic_dirichlet}
\eeq

The problem with this picture is that, despite the apparent generality of elliptic complexes, a cohomological formulation for problems beyond the de Rham complex is often out of reach. The reason is twofold. First, as alluded to in \secref{sec:main_results_intro}, constructing a sequence $(A_\bullet)$ that incorporates a given $A$ into a cochain complex is generally not possible unless restrictive geometric assumptions are imposed. Second, geometric problems often fail to satisfy the required ellipticity conditions, as they are typically overdetermined---again, usually due to gauge invariance and geometric constraints. Specifically, in the diagram \eqref{eq:elliptic_complex_diagram}, $A_{\alpha}$ and $A_{\alpha-1}$ may not have the same order, implying that the associated Laplacian $D^*_{\alpha}D_{\alpha}$ fails to be elliptic even in the interior. Indeed, as aptly put in \cite[Ch.~12.A, p.~465]{Tay11b}:
%%%%%%%%%%%%
\begin{quotation}
``With this sketch of elliptic complexes done, it is time to deliver the bad news. The regularity (i.e., ellipticity) hypothesis is rarely satisfied, other than for the de Rham complex.''
\end{quotation}
%%%%%%%%%%%%
\subsection{Generalized Hodge theory}\label{sec:Hodge_intro}Our generalization of elliptic complexes, namely the notion of an
\emph{elliptic pre-complex}, is designed to
systematically weaken the requirements on the diagram
\eqref{eq:elliptic_complex_diagram} above, thereby accommodating a broader
class of geometric examples while still allowing Hodge decompositions and
cohomology to manifest.

We state here only the Dirichlet version, developed in \cite{Led25B}, and in a
prototypical form suitable for our purposes. While still built on diagrams of
operators of the form \eqref{eq:elliptic_complex_diagram}, related to one
another by the Green formulae \eqref{eq:Green_elliptic_intro}, this notion not
only removes the requirement that the main sequence form a cochain complex, but
also weakens the demands of \defref{def:elliptic_complex} by replacing
ellipticity with the notion of overdetermined ellipticity: 
%%%%%%%%%%%%%%%%%%%%%%%%%%%%%%%%%
\begin{definition}[Elliptic pre-complex, Dirichlet conditions]
\label{def:elliptic_pre_complex}
Let $\alpha_{0} \in \Nzero \cup \BRK{\infty}$. A diagram of operators \eqref{eq:elliptic_complex_diagram}, satisfying Green's formulae \eqref{eq:Green_elliptic_intro}, is called an $\alpha_{0}$-elliptic pre-complex (based on Dirichlet conditions) if, for every $\alpha \leq \alpha_{0}$, the following conditions are satisfied:
\begin{enumerate}[itemsep=0pt,label=(\alph*)]
\item $B_{\alpha}$ and $B_{\alpha}^*$ are normal systems of trace operators.
\vspace{0.4em}
\item The diagram satisfies the following \emph{order reduction property}:
\[
\begin{gathered} 
\ord(A_{\alpha+1} A_{\alpha}) \leq \ord(A_{\alpha}), \qquad
\ker B_{\alpha-1} \subseteq \ker(B_{\alpha} A_{\alpha-1}).
\end{gathered}
\]
\item The system $A_{\alpha} \oplus A_{\alpha-1}^* \oplus B_{\alpha}$ is overdetermined elliptic.
\end{enumerate}
\end{definition}
%%%%%%%%%%%%%%%%%%%%%%%%%%%%
We remark that, although perhaps not immediately apparent, items (a) and (b) are satisfied trivially in an elliptic complex as in \defref{def:elliptic_complex}, so that this is indeed a generalization. An analogous picture adapted to
Neumann conditions can also be formulated and, in fact, preceded the Dirichlet
picture chronologically. This theory was developed in~\cite{KL23} under the
term ``elliptic pre-complex,'' without distinguishing it as the Neumann case,
and was also subsequently extended in \cite{Led25B}. 

%There, both Dirichlet and Neumann conditions are treated side by side to emphasize that the two settings follow essentially the same analytical scheme and differ only by minor technical adjustments that do not affect the core arguments. 

The main result for an $\alpha_{0}$-elliptic pre-complex is the following:
%%%%%%%%%%%%%%%
\begin{theorem}[Lifted complex]
\label{thm:lifted_complex}
For every $\alpha \leq \alpha_0$, there exists a continuous linear map of Fréchet spaces 
\[
\bA_{\alpha+1} : \Gamma(\bbE_{\alpha+1}) \to \Gamma(\bbE_{\alpha+2}),
\]
uniquely characterized by the following properties:
\[
\begin{aligned}
& (1) \quad && \bA_{\alpha+1}\bA_\alpha = 0,
& \qquad &&& \text{on } \ker B_\alpha, \\
& (2) \quad && \bA_{\alpha+1} = A_{\alpha+1},
& \qquad &&& \text{on } \ker \bA^*_{\alpha} .
\end{aligned}
\]
The operator $\bA_{\alpha+1}$ differs from $A_{\alpha+1}$ by a Green operator of order and class zero. The resulting sequence of operators, denoted by $(\bA_{\bullet})$, is called the lifted complex induced by $(A_{\bullet})$.
\end{theorem}

By the defining properties of the lifted complex, it follows in particular that $\bA_0 = A_0$ trivially. 

It is important to remark that none of the main results of the present paper rely on the technical aspects of the proof. However, for completeness, and since the treatment in \cite{Led25B} is much more general than what is required here, we include an appendix (\appref{app:Construction}) that distills a proof for the present case and its consequences. 

Since $\bA_{\alpha}-A_{\alpha}$ is of zero order and class, the lifted operators $\bA_{\alpha}$ and their adjoints $\bA_{\alpha}^*$ inherit the Green's formula \eqref{eq:Green_elliptic_intro} without extra boundary terms:
\beq
\bra \bA_{\alpha}\psi,\eta\ket
= \bra\psi,\bA^*_{\alpha}\eta\ket
+ \bra B_{\alpha}\psi,B_{\alpha}^*\eta\ket.
\label{eq:Green_forumula_correction}
\eeq
As a byproduct, for every $\alpha\leq \alpha_0$:
\beq 
\begin{aligned}
(1) \qquad & \bA_{\alpha}^* \, \bA_{\alpha+1}^* = 0 \;\;\text{identically}, \\
(2) \qquad & \ker B_{\alpha-1} \subseteq \ker(B_{\alpha}\bA_{\alpha-1}).
\end{aligned}
\label{eq:lifted_adjoint_relations} 
\eeq  
%%%%%%%%%%%%%%
These relations, in turn, give rise to generalized Hodge decompositions:  
%%%%%%%%%%%%%%%%%%%%%%%%%%%%%%
\begin{theorem}[Hodge decomposition]
\label{thm:Hodge_decomposition}
For every $\alpha < \alpha_{0}$, there exists a topologically direct, $L^{2}$-orthogonal decomposition of Fréchet spaces given by
\beq
\Gamma(\bbE_{\alpha+1}) =
\lefteqn{\overbrace{\phantom{\image{\bA_{\alpha}|_{\ker B_{\alpha}}}\oplus \module_{\D}^{\alpha+1}}}^{\ker(\bA_{\alpha+1}|_{\ker B_{\alpha+1}})}}
\image{(\bA_{\alpha}|_{\ker B_{\alpha}})}
\oplus
\underbrace{\module_{\D}^{\alpha+1}\oplus \image{(\bA^*_{\alpha+1}})}_{\ker (\bA_{\alpha}^*)}.
\label{eq:Hodge_intro}
\eeq
The $L^{2}$-orthogonal projections onto the summands in \eqref{eq:Hodge_intro} are Green operators of order and class zero.

Moreover, the module $\module_{\D}^{\alpha+1} 
:= \ker(\bA_{\alpha+1},\bA_{\alpha}^*,B_{\alpha+1})$ is finite-dimensional and satisfies: 
\beq
\module_{\D}^{\alpha+1}
\simeq
\ker(\bA_{\alpha+1}|_{\ker{B_{\alpha+1}}})
\big/\image(\bA_{\alpha}\vert_{\ker B_{\alpha}})\simeq \ker(\bA_{\alpha}^*)\big/\image(\bA_{\alpha+1}^*).
\label{eq:cohomology_corrected}
\eeq
\end{theorem}
%%%%%%%%%%%%%%%%%%%%%%%%%%%%%%

We remark that since the projections onto the constituent spaces belong to the calculus, the decompositions extend to Sobolev regularity. This is not required for the present work, as the entire analysis is carried out in the smooth Fréchet topology.

The explicit characterization of the lifting in \thmref{thm:lifted_complex} then allows us to obtain reduced expressions for the adjoint and the cohomology:

%%%%%%%%%%%%%
\begin{proposition}
\label{prop:coadjoint}
In the setting of \thmref{thm:Hodge_decomposition}, let 
\[
\tbP_{\alpha-1} \colon \Gamma(\bbE_\alpha) \to \Gamma(\bbE_\alpha)
\] 
be the $L^2$-orthogonal projection onto $\ker(\bA_{\alpha-1}^*)$, so $\id - \tbP_{\alpha-1}$ is the projection onto $\image(\bA_{\alpha-1}|_{\ker B_{\alpha-1}})$. Then, for every $\alpha\leq \alpha_0$,  
\[
\bA_\alpha = A_\alpha\tbP_{\alpha-1} \Textand
\bA_\alpha^* = \tbP_{\alpha-1}A_\alpha^*,
\]
and the cohomology \eqref{eq:cohomology_corrected} can be rewritten in the following form:
\beq
\module_{\D}^{\alpha+1} = \ker(A_{\alpha+1}, \tbP_{\alpha-1}A^*_\alpha, B_{\alpha+1}).
\label{eq:cohomology_expression}
\eeq
\end{proposition}
%%%%%%%%%%%%%%%%%%%

We emphasize that the fact that the cohomology modules $\module_{\D}^{\alpha+1}$ in \eqref{eq:cohomology_corrected} can be expressed in terms of the original, non-lifted operators is an important feature of the theory. Without this property, the raw expression for the cohomology in \eqref{eq:cohomology_corrected} would be difficult to interpret. This characterization is a direct consequence of how the lifting is carried out in \thmref{thm:lifted_complex}.

We also remark that, for $\alpha=-1$ and $\alpha=0$, since $\tbP_{-2}=0$ and $\tbP_{-1}=\id$, the cohomologies there are expressible purely in terms of the original operators: 
\beq 
\module^0_{\D}=\ker(A_0,B_0), \qquad \module^1_{\D}=\ker(A_1,A_0^*,B_1).
\label{eq:lower_cohomolgies}
\eeq 
In general, although \eqref{eq:cohomology_expression} implies that it always holds that 
\[
\ker(A_{\alpha+1}, A_\alpha^*, B_{\alpha+1}) \subseteq \module^{\alpha+1}_{\D},
\]
there exist counterexamples to the reverse inclusion. 

Iterating the Hodge decompositions \eqref{eq:Hodge_intro}, together with the Green's formulae \eqref{eq:Green_forumula_correction} and the defining relations of the lifted complex $(\bA_{\bullet})$, yields solvability and uniqueness for the original operators, once supplemented with the canonical gauge and boundary conditions:
%%%%%%%%%%%%%%%%%%%

\begin{theorem}[Cohomological formulation]
\label{thm:cohomology_dirichlet}
Let $\alpha< \alpha_0$ and $\eta \in \Gamma(\bbE_{\alpha+1})$. Then the boundary-value problem
\[
\begin{aligned}
&A_\alpha \omega = \eta, \qquad \bA^*_{\alpha-1}\omega = 0 
\qquad &&\text{in } M, \\
&\qquad\quad\, B_{\alpha}\omega = 0 
\qquad &&\text{on } \dM
\end{aligned}
\]
admits a solution $\omega\in\Gamma(\bbE_{\alpha})$ if and only if: 
\[
\bA_{\alpha+1}\eta = 0, \qquad B_{\alpha+1}\eta = 0, \qquad \eta \perp_{L^2} \module^{\alpha+1}_{\D}.
\]
The solution is unique modulo $\module_{\D}^\alpha$.
\end{theorem}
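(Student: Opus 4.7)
The plan is to reduce the boundary value problem to the Hodge decomposition of \thmref{thm:Hodge_decomposition} applied at levels $\alpha$ and $\alpha+1$, using the defining properties of the lifted complex $(\mathpzc{A}_\bullet)$ from \thmref{thm:lifted_complex} together with the derived relations \eqref{eq:lifted_adjoint_relations}. The two gauge-like side conditions, $\mathpzc{A}^*_{\alpha-1}\omega=0$ in the interior and $B_\alpha\omega=0$ on the boundary, are to be engineered by choosing the candidate solution inside the appropriate Hodge summand.

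For necessity, assume $\omega$ solves the system. Since $\mathpzc{A}^*_{\alpha-1}\omega=0$, property~(2) of \thmref{thm:lifted_complex} gives $\mathpzc{A}_\alpha\omega=A_\alpha\omega=\eta$. Applying $\mathpzc{A}_{\alpha+1}$ and using property~(1) in conjunction with $\omega\in\ker B_\alpha$ yields $\mathpzc{A}_{\alpha+1}\eta=0$; applying $B_{\alpha+1}$ and invoking \eqref{eq:lifted_adjoint_relations}(2) yields $B_{\alpha+1}\eta=0$. The orthogonality $\eta\perp_{L^2}\module^{\alpha+1}_\D$ is then immediate, since $\eta$ lies in the summand $\image(\mathpzc{A}_\alpha|_{\ker B_\alpha})$ of \eqref{eq:Hodge_intro}, which is $L^2$-orthogonal to $\module^{\alpha+1}_\D$ by construction.

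For sufficiency, the three conditions on $\eta$ place it, via \eqref{eq:Hodge_intro}, inside $\image(\mathpzc{A}_\alpha|_{\ker B_\alpha})$, producing some $\tilde\omega\in\ker B_\alpha$ with $\mathpzc{A}_\alpha\tilde\omega=\eta$. But $\tilde\omega$ need not obey the gauge $\mathpzc{A}^*_{\alpha-1}\tilde\omega=0$. To enforce it, apply the Hodge decomposition at level $\alpha$ and split $\tilde\omega=\mathpzc{A}_{\alpha-1}\tau+\omega$, with $\tau\in\ker B_{\alpha-1}$ and $\omega\in\ker\mathpzc{A}^*_{\alpha-1}$. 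Then $\omega$ still lies in $\ker B_\alpha$, because the first summand does by \eqref{eq:lifted_adjoint_relations}(2), and $\mathpzc{A}_\alpha\omega=\eta$, because the first summand is annihilated by $\mathpzc{A}_\alpha$ by property~(1). Finally, $\mathpzc{A}^*_{\alpha-1}\omega=0$ combined with property~(2) promotes $\mathpzc{A}_\alpha\omega=\eta$ to $A_\alpha\omega=\eta$, completing existence.

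Uniqueness modulo $\module_\D^\alpha$ is a one-liner: if $\omega$ solves the homogeneous problem, then $\mathpzc{A}^*_{\alpha-1}\omega=0$ and property~(2) yield $\mathpzc{A}_\alpha\omega=A_\alpha\omega=0$, so $\omega\in\ker(\mathpzc{A}_\alpha,\mathpzc{A}^*_{\alpha-1},B_\alpha)=\module_\D^\alpha$. The main conceptual point is not in any single step but in repeatedly switching between the original operator $A_\alpha$, in which the problem is stated, and its lifted version $\mathpzc{A}_\alpha$, in which the Hodge theory is available, invoking property~(2) at each passage; the boundary-compatibility relation \eqref{eq:lifted_adjoint_relations}(2) is precisely what ensures the gauge correction in the sufficiency argument preserves the Dirichlet condition $B_\alpha\omega=0$.
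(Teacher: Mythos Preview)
Your proof is correct and follows essentially the approach indicated in the paper, which states only that the result follows by ``iterating the Hodge decompositions \eqref{eq:Hodge_intro}, together with the Green formulae \eqref{eq:Green_forumula_correction} and the defining relations of the lifted complex $(\mathpzc{A}_{\bullet})$.'' You have spelled this out accurately: the necessity argument correctly uses property~(2) of \thmref{thm:lifted_complex} to pass from $A_\alpha$ to $\mathpzc{A}_\alpha$, then property~(1) and \eqref{eq:lifted_adjoint_relations}(2) for the constraints; the sufficiency argument correctly identifies $\eta$ in the right Hodge summand at level $\alpha+1$, then applies the level-$\alpha$ decomposition to enforce the gauge, with the key observation that \eqref{eq:lifted_adjoint_relations}(2) (shifted down one index) preserves $\ker B_\alpha$ under subtraction of the $\mathpzc{A}_{\alpha-1}\tau$ piece.
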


The definition of an elliptic pre-complex can be generalized in several ways to broaden the scope of the theory even further. For our purposes here, we will need a technical variant, presented and developed in \cite[Ch.~4.1.3]{Led25B}.

%%%%%%%%%%%%%%%%%%%%%
\begin{definition}[Disrupted elliptic pre-complex]
\label{def:disrupted_pre_complex}
A diagram $(A_{\bullet})$ as in \eqref{eq:elliptic_complex_diagram} is called a disrupted $\alpha_0$-elliptic pre-complex (based on Dirichlet conditions) if all requirements of \defref{def:elliptic_pre_complex} hold, with the following differences:
\begin{enumerate}
\item \emph{(Finiteness)} $A_{\alpha} = 0$ and $B_{\alpha}=0$ for all $\alpha \geq \alpha_0 \in \Nzero$.
\vspace{0.4em}
\item \emph{(Disruption)} Overdetermined ellipticity fails at the penultimate level:
\[
A_{\alpha_0-1} \oplus A_{\alpha_0-2}^* \oplus B_{\alpha_0-1}
\qquad \text{is not overdetermined elliptic}.
\]
\end{enumerate}
\end{definition}
%%%%%%%%%%%%%%%%%
\begin{proposition}
\label{prop:disrupted_valid}
All the results above hold verbatim in the disrupted case, except for the finite dimensionality of $\module^{\alpha_0-1}_{\D}$ when $\alpha = \alpha_0-1$.
\end{proposition}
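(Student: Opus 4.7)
The plan is to revisit each of the structural results in \secref{sec:Hodge_intro}---the lifted complex (\thmref{thm:lifted_complex}), the Hodge decomposition (\thmref{thm:Hodge_decomposition}), and the cohomological formulation (\thmref{thm:cohomology_dirichlet})---and pinpoint the precise role played by the overdetermined ellipticity of $A_{\alpha_0}\oplus A_{\alpha_0-1}^{*}\oplus B_{\alpha_0}$. The aim is to show that this role is confined to the finite-dimensionality of $\module^{\alpha_0}_{\D}$ and the refinement of harmonics \eqref{eq:cohomology_expression} at the level $\alpha+1=\alpha_0$, while every other conclusion of the elliptic-pre-complex framework persists.

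First I would handle the lifted complex. The recursion producing $\mathpzc{A}_{\alpha}$ for $\alpha\leq \alpha_0-1$ invokes only overdetermined ellipticity at levels $\leq\alpha_0-1$, so it proceeds verbatim. For $\alpha>\alpha_0$, the trivial choice $\mathpzc{A}_{\alpha}=0$ satisfies both defining relations in \thmref{thm:lifted_complex}, since $A_{\alpha}=0$ there. At the disrupted level, $\mathpzc{A}_{\alpha_0}$ is determined by correcting $A_{\alpha_0}$ so that it coincides with it on $\ker\mathpzc{A}^{*}_{\alpha_0-1}$; since $\mathpzc{A}_{\alpha_0+1}=0$, the cocycle condition $\mathpzc{A}_{\alpha_0+1}\mathpzc{A}_{\alpha_0}=0$ on $\ker B_{\alpha_0}$ is automatic, so only the second defining relation must be enforced. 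I would verify that the enforcement requires only the Hodge decomposition at level $\alpha_0-1$---available from overdetermined ellipticity there---not at the disrupted one.

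Next I would address the Hodge decomposition. For $\alpha+1\neq \alpha_0$ the classical argument applies unchanged. At $\alpha+1=\alpha_0$ I would establish closedness of $\image\mathpzc{A}_{\alpha_0-1}|_{\ker B_{\alpha_0-1}}$ from the overdetermined ellipticity holding at level $\alpha_0-1$, and closedness of $\image\mathpzc{A}^{*}_{\alpha_0}$ via duality---using the Green formula \eqref{eq:Green_forumula_correction} together with the closedness of the primal image, by the Banach closed range theorem. Once both images are closed and $L^{2}$-orthogonal, the topologically direct decomposition \eqref{eq:Hodge_intro} follows formally; the only property that genuinely fails is compactness of the associated $L^{2}$-projector onto $\module^{\alpha_0}_{\D}$, so that space can be infinite-dimensional, and the iterated Fredholm-alternative argument from \cite[Ch.~3.3.9]{Led25B} refining $\module^{\alpha_0}_{\D}$ into $\ker(A_{\alpha_0},A^{*}_{\alpha_0-1},B_{\alpha_0})$ ceases to apply.

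Finally, the cohomological formulation is a formal consequence of the Hodge decomposition together with the Green formula \eqref{eq:Green_forumula_correction}; since both ingredients remain valid at every level in the disrupted case, the statement of \thmref{thm:cohomology_dirichlet} carries over verbatim. I expect the main technical hurdle to be the closedness of $\image\mathpzc{A}^{*}_{\alpha_0}$ in the absence of overdetermined ellipticity at $\alpha_0$; resolving it will require a careful tracking of the a priori estimates used in the elliptic case to isolate which of them survive when only the neighboring levels are overdetermined elliptic.
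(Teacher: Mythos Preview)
The paper does not prove this proposition. \secref{sec:Hodge} is explicitly a survey of results imported from \cite{Led25B}, and \propref{prop:disrupted_valid} is stated without any accompanying argument; the proof lives entirely in the cited reference. There is therefore no in-paper proof against which to compare your proposal.

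That said, your outline is a sensible roadmap for how such a verification would go, and it correctly isolates the two casualties (finite-dimensionality of $\module^{\alpha_0}_{\D}$ and the refinement \eqref{eq:cohomology_expression} at $\alpha+1=\alpha_0$) as the only places where the missing overdetermined ellipticity is genuinely needed. One remark: the ``main technical hurdle'' you anticipate---closedness of $\image\mathpzc{A}^{*}_{\alpha_0}$---is less severe than you suggest. By \defref{def:disrupted_pre_complex} only the \emph{penultimate} instance of condition (c) fails; the one governing $A_{\alpha_0}^{*}$ (condition (c) at index $\alpha_0$, which with $A_{\alpha_0+1}=0$ reduces to overdetermined ellipticity of $A_{\alpha_0}^{*}\oplus B_{\alpha_0+1}$) remains in force. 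Semi-Fredholmness of $A_{\alpha_0}^{*}$, and hence closedness of its range, is therefore available directly. Your appeal to ``duality with the primal image'' via the closed range theorem is off-target: that theorem relates $\image T$ and $\image T^{*}$ for the \emph{same} operator $T$, so it cannot transfer closedness from $\mathpzc{A}_{\alpha_0-1}$ to $\mathpzc{A}^{*}_{\alpha_0}$ across levels of the complex.
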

%%%%%%
In particular, the Hodge decompositions \eqref{eq:Hodge_intro} remain valid as stated; namely, the summands are closed subspaces in the smooth Fréchet topology, and the projections onto them belong to the calculus. In broad terms, this holds because the ranges of the operators in the decomposition remain closed, even without overdetermined ellipticity at the penultimate level, thereby preserving the $L^{2}$-splitting and the associated regularity. We do, however, lose the finite dimensionality of the kernel. A proof of \propref{prop:disrupted_valid} is given in the appendix (\secref{app:disrupted_proof}).
\section{Setup for the Hodge theory}
\label{sec:TecSetup} 
%%%%%%%%%%%%%%
%In this section, we therefore cast the {Einstein equations with sources}~\cite{Hin24} into an elliptic pre-complex, rather than the Ricci curvature equations. We show that the main results presented in \secref{sec:main_results_intro} follow directly from the analysis of the linearized Einstein equations, once translated back to the Ricci picture (cf.~\secref{sec:main_proof}).
%%%%%%%%%%%%%%%%%%%%%%
\subsection{Linearized Ricci curvature}
\label{sec:setup}
As premised in \secref{sec:cohomology}, the starting point of our study is to view the Ricci tensor as a nonlinear differential operator:
\[
g \mapsto \Ric_g : \scrM(M) \rightarrow \SM.
\]
As discussed in \secref{sec:main_results_intro} and \secref{sec:outlook}, we then fix an arbitrary tensorial endomorphism $\tGamma:\SM\rightarrow\SM$, which we will write for short as $\tGamma\in\End(\SM)$. We recall that a prime example is provided by endomorphisms arising from connections~\cite[pp.~93, 214]{Ham82} on the space of Riemannian metrics that are tensorial in their $\SM$-arguments, defined by
\beq 
\tGamma\sigma := \tGamma(\Ric_{g}, \sigma), \qquad
\tGamma \colon \scrM(M) \times \SM \times \SM \to \SM.
\label{eq:connection}
\eeq 
This makes the linear map $\D_{\Gamma}\Ric_{g} \colon \SM \rightarrow \SM$, 
\[
\D_{\Gamma}\Ric_{g}\sigma := \dertZero\Ric_{g+t\sigma} + \tGamma\sigma, 
\]
the covariant derivative of $g \mapsto \Ric_{g}$, viewed as a vector field on $\scrM(M)$. Despite this geometric interpretation, we will not rely on the assumption that $\tGamma$ arises from a connection; the fact that it is a tensorial endomorphism is the only property we require.

We also recall the explicit formula for the linearized Ricci tensor (see, e.g., \cite[p.~4]{And08} and \cite{FM75,PW21}), on which we will rely:
\beq
\D \Ric_g \sigma = \tfrac{1}{2} {\nabg}^* \nabg \sigma - \Def \delBianchi \mathrm{B}_{g} \sigma + \tfrac{1}{2} \calR_g \sigma,
\label{eq:Ricci_variation}
\eeq
where $\calR_{g} \colon \SM \rightarrow \SM$ denotes the Lichnerowicz curvature operator, and $\mathrm{B}_g \colon \SM \rightarrow \SM$ is the Bianchi contraction
\[
\mathrm{B}_{g}\sigma = \sigma - \tfrac{1}{2}\trace_{g}\sigma\,g.
\]
The explicit expression for $\calR_{g}$ shows that when $g$ is viewed as an element of $\SM$ and the trace as a map $\trace_{g} \colon \SM \rightarrow C^{\infty}(M)$, we have $\trace_{g} \calR_{g} = 0$ and $\calR_{g} g = 0$; thus:
\beq 
\mathrm{B}_{g}\calR_{g} = \calR_{g}\mathrm{B}_{g} = \calR_{g}. 
\label{eq:Lic_prop}
\eeq
We also recall the Killing operator $\Def \colon \XM \rightarrow \SM$ and its adjoint, the tensor divergence $\delBianchi \colon \SM \rightarrow \XM$, defined respectively by
\[
\Def{X} = \tfrac{1}{2}\calL_{X}g, \qquad (\delBianchi\sigma,X)_{g} = -\sum_{i} (\nabg_{E_i}\sigma)(E_i, X),
\]
where $\{E_i\}$ is any local orthonormal frame.
%%%%%%%%%%%
\subsection{Double forms and Green's formulae} 
%%%%%%%%%%%%%%%%%
\label{sec:double_forms}
To cast $\D_{\tGamma}\Ric_{g}$ and its associated operators $\Def$ and $\deltag$ within the generalized Hodge theory developed in \secref{sec:Hodge}, we must first identify the Green's formulae satisfied by these operators, together with the boundary systems naturally associated to them. This becomes clear from the requirements of the theory, as well as from the structure of the diagram \eqref{eq:elliptic_complex_diagram}.

For this purpose, and for several of the computations to follow—not only the derivation of Green's formulae, but also the manipulation of linearized curvature and its constraints, and later the Bochner formulae discussed in \secref{sec:main_results_intro}—we find that it is both convenient and conceptually clarifying to work in the language of vector-valued forms taking values in the exterior algebra. These are known in the literature as \emph{double forms}.

The notion of double forms traces back to de Rham's double currents, and was introduced into the Riemannian setting by Calabi \cite{Cal61} in the study of the Killing operator, before being developed extensively by Kulkarni \cite{Kul72} in the study of curvature symmetries. We recall here the part of this framework needed for our purposes, following our previous work~\cite{KL21a}.

On a Riemannian manifold with boundary $(M,g)$ of dimension $n=\dim M$, and for $k,m\in\Nzero$, let
\[
\Lkm{k}{m} := \Lambda^k T^*M \otimes \Lambda^m T^*M
\]
denote the graded algebra of $(k,m)$-covectors. This structure naturally inherits several operations from the exterior algebra, namely the wedge product and Hodge-dual isomorphisms, and has a natural involution:
\[
\begin{aligned}
&(\cdot)^T : \Lkm{k}{m} \to \Lkm{m}{k}, \quad 
&&\wedge : \Lkm{k}{m} \times \Lkm{\ell}{n} \to \Lkm{k+\ell}{m+n} \\ 
&\starG : \Lkm{k}{m}\to \Lkm{d-k}{m}, \quad 
&&\starGV : \Lkm{k}{m} \to \Lkm{k}{d-m},
\end{aligned}
\]
where $\starGV \psi = (\starG \psi^T)^T$. We denote by $\Wkm{k}{m}$ the corresponding space of sections, namely the module of $(k,m)$-double forms. In this language, the space $\SM$ of symmetric tensor fields is identified with the subspace
\[
\SM=\{\sigma\in\Wkm{1}{1} : \sigma^T=\sigma\}.
\]
The interior products are likewise inherited from the exterior algebra:
\[
i_X \colon \Lkm{k}{m} \to \Lkm{k-1}{m} \Textand i_X^V \colon \Lkm{k}{m} \to \Lkm{k}{m-1},
\]
with the analogous definitions. Thus, given an orthonormal frame $\{E_i\}$, the metric trace operator is expressed as
\beq 
\trace_g \colon \Lkm{k}{m}\to \Lkm{k-1}{m-1}
\qquad\qquad
\trace_g \psi = \sum_{i=1}^n i_{E_i} i^V_{E_i} \psi.
\label{eq:trace}
\eeq 

As Calabi and Kulkarni pointed out, the natural symmetries of the Riemann curvature tensor allow it to be viewed, in this framework, as a nonlinear differential operator
\[
\Rm \colon \scrM(M)\rightarrow \Wkm{2}{2}, \qquad g\mapsto\Rm_g.
\]
Its linearization at a reference metric $g\in\scrM(M)$ is then the second-order differential operator~\cite[p.~560]{Tay11b}
\[
\D\Rm_g \colon \SM \rightarrow \Wkm{2}{2}, \qquad \D\Rm_g\sigma:=\dertZero\Rm_{g+t\sigma}.
\]
We next record a more explicit form of this operator. Since double forms are particular instances of vector-valued forms, the exterior covariant derivatives and their adjoints~\cite[Ch.~9]{Pet16} apply to them as well, giving the operations
\beq 
\dg \colon \Wkm{k}{m} \to \Wkm{k+1}{m}
\quad\text{and}\quad
\deltag \colon \Wkm{k+1}{m} \to \Wkm{k}{m},
\label{eq:exterior1}
\eeq 
where the background connection is induced by the Riemannian metric. Acting through the involution, these in turn define the operators
\beq 
\dgV \colon \Wkm{k}{m} \to \Wkm{k}{m+1}
\quad\text{and}\quad
\deltagV \colon \Wkm{k}{m+1} \to \Wkm{k}{m},
\label{eq:exterior2}
\eeq 
where $\dgV \psi = (\dg \psi^T)^T$ and $\deltagV \psi = (\deltag \psi^T)^T$. 

In particular, the Killing operator takes the form
\beq 
\Def{X}=\dgV\omega+(\dgV \omega)^{T}, \qquad \text{where } \omega=\tfrac{1}{2}X^{\flat_{g}}\in\Wkm{1}{0}.
\label{eq:Killing_exterior_relations}  
\eeq 
The adjoint of this operator, the tensor divergence, can be identified with $\delBianchi\sigma\in\Wkm{0}{1}\simeq\XM$ up to the musical isomorphism; we shall therefore use this identification whenever no ambiguity can arise. For the linearized curvature, a direct computation gives~\cite[p.~560]{Tay11b}
\beq 
2\,\D\Rm_g=\Hg-D_g,
\label{eq:DRm}
\eeq 
where $D_g$ is a tensorial term depending on $\Rm_g$, and $\Hg \colon \SM \rightarrow\Wkm{2}{2}$ is the second-order differential operator
\beq 
\Hg:=\tfrac{1}{2}(\dg\dgV+\dgV\dg).
\label{eq:curlcurl} 
\eeq

We now introduce boundary operators on double forms, so we can formulate the relevant Green's formulae for the geometric operators above. Recall first the second fundamental form and the shape operator of the boundary~\cite[Ch.~3]{Pet16}; in the convention of \defref{def:normal_system}, these are
\[
\Ah_{g}= (\nabg\dr)^{\flat}=\mathrm{Hess}_g r, \qquad \Sh_{g}=\nabg\dr.
\]
Let $\jmath \colon \dM\hookrightarrow M$ be the boundary inclusion. For a double form $\eta\in\Wkm{k}{m}$, we define its mixed boundary projections by
\beq 
\begin{aligned}
&\PttG\eta = \jmath^*\eta = \jmath^* \Dng{0}\eta\in\plWkm{k}{m}, 
\\&\PnnG\eta=\jmath^*i_{\dr}i^{V}_{\dr}\eta = \jmath^* i_{\dr}i^{V}_{\dr}\Dng{0}\eta\in\plWkm{k-1}{m-1}
\\&\PntG\eta=\jmath^*i^{V}_{\dr}\eta= \jmath^* i^{V}_{\dr}\Dng{0}\eta \in\plWkm{k}{m-1}
\\ &\PtnG\eta=\jmath^*i_{\dr}\eta = \jmath^* i_{\dr}\Dng{0}\eta \in\plWkm{k-1}{m},
\end{aligned}
\label{eq:boundary_tn}
\eeq 
where the subscripts $\frakn\frakn$, $\frakt\frakn$, $\frakn\frakt$, and $\frakt\frakt$ denote the normal-normal, tangential-normal, normal-tangential, and tangential-tangential projections, respectively. The absence of a subscript $g$ in $\PttD$ reflects the fact that this projection is metric-independent, whereas the remaining projections depend on $g$ through the choice of the inward normal $\dr$. For a symmetric tensor $\sigma \in \SM$, the tensor $\PttG \sigma\in\SdM$ is precisely its boundary pullback. 

We can use these to express the boundary terms in the standard Green's formula for the exterior covariant derivative and its adjoint~\cite[p.~180]{Tay11a}. In the notation of \eqref{eq:Green_elliptic_intro}, for any $\omega\in\Wkm{k}{m}$ and $\eta\in\Wkm{k+1}{m}$, the formula reads
\beq  
\bra \dg\omega,\eta\ket=\bra \omega,\deltag\eta\ket-\bra \PttD\omega,\PntG\eta\ket-\bra \PtnG\omega,\PnnG\eta\ket.
\label{eq:Greens_exterior}  
\eeq

We next use this structure to derive a Green's formula for $\D\Rm_{g}$ in \eqref{eq:DRm}. We find that the natural boundary operator in this context is the linearized second fundamental form. Indeed, in analogy with the maps $g\mapsto\Ric_g$ and $g\mapsto\Rm_{g}$, the second fundamental form may be regarded as a map
\[
\Ah \colon \scrM(M)\rightarrow S^2(\dM), \qquad g\mapsto\Ah_{g},
\]
whose linearization at $g\in\scrM(M)$ is the first-order differential operator
\[
\D\Ah_{g} \colon \SM \rightarrow S^2(\dM), \qquad \D\Ah_{g}\sigma:=\dertZero\Ah_{g+t\sigma}. 
\]
To express this operator in a form compatible with \eqref{eq:DRm}, we introduce the boundary operators \cite[Ch.~3]{KL21a}
\beq 
\begin{aligned}
\frakT_g\sigma &= \tfrac{1}{2} \brk{\PntG \dg\sigma-\dg \PntG\sigma} + \tfrac{1}{2} \brk{\PtnG\dgV\sigma- \dgV \PtnG\sigma}, \\
\frakT_g^*\eta &= -\tfrac{1}{2}\brk{ \PtnG\deltag\eta+\deltag \PtnG\eta} - \tfrac{1}{2}\brk{\PntG\deltagV\eta+\deltagV \PntG\eta}.
\end{aligned}
\label{eq:frakT}
\eeq 
An explicit computation~\cite[p.~755]{KL21a} shows that
\beq
\frakT_{g}\sigma=-d_{\gamma}^{V}\PtnG\sigma-(d_{\gamma}^{V}\PtnG\sigma)^{T}-(\PnnG\sigma)\Ah_{g}+\Sh_{g}\PttD\sigma+\PttD\nabg_{\dr}\sigma.
\label{eq:frakT_explicit} 
\eeq 
Here $\Sh_{g}\colon S^2(\dM)\rightarrow S^2(\dM)$ denotes the symmetrized contraction
\[
(\Sh_{g}\mu)(X;Y)=\tfrac{1}{2}\mu(\Sh_{g}(X);Y)+\tfrac{1}{2}\mu(\Sh_{g}(Y);X).
\]
Comparing this formula with the known expression for $\D\Ah_{g}$ (e.g., \cite[pp.~5--6]{And08}) and with the expression for $\Defd$ in \eqref{eq:Killing_exterior_relations}, we obtain
\beq 
2\,\D\Ah_{g}=\frakT_g+\Sh_{g}\PttG.
\label{eq:DA}
\eeq

The definition of $\frakT_{g}$ is tailored precisely to the leading second-order term in \eqref{eq:DRm}. Consequently, a direct iteration of Green's formula \eqref{eq:Greens_exterior} yields
\beq 
\bra \Hg\psi,\eta\ket=\bra\psi,\Hg^*\eta\ket+\bra\PttG\psi,\frakT_g^*\eta\ket-\bra\frakT_g\psi,\PnnG\eta\ket,
\label{eq:Hg_Green}
\eeq 
where $\Hg^* \colon \Wkm{k+1}{m+1} \rightarrow \Wkm{k}{m}$ is defined by
\beq 
\Hg^*=\tfrac{1}{2}(\deltag\deltagV+\deltagV\deltag).
\label{eq:divdiv} 
\eeq

We now apply these identities to obtain the Green's formulae associated with the operators appearing in the boundary value problem \eqref{eq:Einstein_boundary_opening}:
\[
\begin{aligned}
& \D_{\Gamma}\Ric_{g}\sigma = T, \qquad
&& \delBianchi \sigma = 0
\qquad && \text{in } M, \\[3pt]
& \PttD \sigma = 0, \qquad
&& \D\Ah_{g}\sigma = 0
\qquad && \text{on } \dM.
\end{aligned}
\]
For the Killing operator and tensor divergence, \eqref{eq:Greens_exterior} gives, for every $X \in \XM$ and $\sigma \in \SM$,
\beq 
\bra \Def X, \sigma \ket
= \bra X, \delBianchi \sigma \ket
- \bra X|_{\dM}, \PnD_{g}\sigma \ket,
\label{eq:Green_forumla_killing} 
\eeq 
where the normal contraction $\PnD_{g} \colon \SM \to \XM|_{\dM}$ and the restriction $(\cdot)|_{\dM}:\XM\rightarrow \XM|_{\dM}$ are identified as
\beq 
\begin{split} 
&\PnD_{g}\sigma = (i_{\dr}\Dng{0}\sigma)^{\sharp_{g}}\simeq(\PnnG\sigma\oplus\PntG\sigma), 
\\& \sigma|_{\dM}=\Dng{0}\sigma\simeq (\PttG\sigma\oplus\PtnG\sigma).
\end{split} 
\label{eq:Pn_res_iso} 
\eeq 

To state the Green's formula for $\D\Ric_{g}$, we introduce the tensorial operations
\beq 
\begin{gathered} 
\mathrm{E}_{g} \psi = -\trace_{g} \psi + \tfrac{1}{2} (\trace_{g} \trace_{g} \psi)\,g, 
\qquad 
\mathrm{C}_{g} \sigma = -\sigma + \trace_{g} \sigma\, g.
\end{gathered} 
\label{eq:E_C_contractions}
\eeq
The map $\mathrm{C}_{g} \colon \SM\rightarrow \SM$ is a symmetric isomorphism whenever $\dim M \geq 2$, while $\mathrm{E}_{g}\colon \Wkm{2}{2}\rightarrow \Wkm{1}{1}$ is nontrivial whenever $\dim{M}\geq 3$. Moreover, these operations give the relation
\beq
\begin{aligned} 
&\Ein_{g} := \mathrm{B}_{g}\Ric_{g}=\mathrm{E}_{g}\Rm_{g}, \qquad \dim{M}\geq 3,
\end{aligned} 
\label{eq:Ein_B_Ric}
\eeq
where $\Ein_{g}\in\SM$ is the Einstein tensor. 

With this notation in place, \eqref{eq:Hg_Green}, together with the relation \eqref{eq:Ein_B_Ric}, gives the desired Green's formula for Ricci curvature:
%%%%%%%%%%%%%%%%%%
\begin{proposition}
\label{prop:normal_derivative_DA}
Let $\gamma:=\PttD g$ be the pullback metric. Then for every $\sigma,\eta\in \SM$, the following Green's formula holds: 
\beq 
\bra \mathrm{B}_{g}\D\Ric_{g}\sigma, \eta \ket
= \bra \sigma, \mathrm{B}_{g}\D\Ric_{g}\eta \ket
+ \bra \PttD \sigma, \mathrm{C}_{\gD}\frakT_{g}\eta \ket
- \bra \frakT_{g}\sigma, \mathrm{C}_{\gD}\PttD \eta \ket.
\label{eq:Green_formula_Einstein} 
\eeq
The leading normal derivative of $\frakT_g \colon \SM \rightarrow S^2(\dM)$ is $\jmath^*\Dng{1}$. 
\end{proposition}
%%%%%%%%%%%%%%%%%%%%%%%

The Green's formula \eqref{eq:Green_formula_Einstein} appears in the literature in several notations (see, e.g., \cite[Prop.~2.4]{AH24}). In its classical form, it expresses the fact that the linearized Einstein tensor is formally self-adjoint up to lower-order terms, as also noted in \cite{Hin24} in the case $\Ein_{g}=0$. We defer the proof of the formula to \secref{sec:Einstein_green}, where it will be derived from \eqref{eq:Ein_B_Ric} through a duality between the Riemann curvature tensor and the Einstein tensor, which we take a moment to state here.

For the statement, observe that $g\in\SM$, and hence may be wedged with itself to define the operator
\[
g^{N}=\underbrace{g\wedge\dots\wedge g}_{N\text{ times}}:\Wkm{k}{m}\rightarrow \Wkm{k+N}{m+N}.
\]
%%%%%%%
\begin{theorem} 
\label{thm:duality_Riem}
The following identity holds when $\dim{M}:=n\geq 3$:
\[
\Ein_{g}=\frac{1}{(n-3)!}\starG \starGV(g^{n-3}\Rm_{g}).
\]
\end{theorem}
%%%%%%%%%%%%%%%%%%%
We shall prove this theorem in \secref{sec:duality_for_curvature}, as we believe it is not only new but of independent interest, and present further applications there.

Finally, incorporating the fixed tensorial endomorphism term $\Gamma\in\End(\SM)$ from \secref{sec:setup} into \eqref{eq:Green_formula_Einstein}, and observing that such tensorial terms integrate by parts without producing boundary contributions, we obtain
\beq 
\bra \mathrm{B}_{g}\D_{\tGamma}\Ric_{g}\sigma, \eta \ket
= \bra \sigma, \mathrm{B}_{g}\D_{\ttGamma}\Ric_{g}\eta \ket
+ \bra \PttD \sigma, \mathrm{C}_{\gD}\frakT_{g}\eta \ket
- \bra \frakT_{g}\sigma, \mathrm{C}_{\gD}\PttD \eta \ket,
\label{eq:Green_formula_Einstein2} 
\eeq
where
\[
\D_{\ttGamma}\Ric_{g}\sigma:=\D\Ric_{g}\sigma+\ttGamma\sigma, \qquad \ttGamma:=\mathrm{B}_{g}^{-1}\tGamma^*\mathrm{B}_{g}.
\]
Indeed, since $\mathrm{B}_{g}$ is a symmetric isomorphism, we have
\[
(\mathrm{B}_{g}\D_{\tGamma}\Ric_{g})^*=\mathrm{B}_{g}\D\Ric_{g}+\tGamma^*\mathrm{B}_{g},
\]
and, setting $\ttGamma:=\mathrm{B}_{g}^{-1}\Gamma^*\mathrm{B}_{g}$, this becomes
\[
(\mathrm{B}_{g}\D_{\tGamma}\Ric_{g})^*=\mathrm{B}_{g}\D_{\ttGamma}\Ric_{g}=\mathrm{B}_{g}\D\Ric_{g}+\ttGamma.
\]
%%%%%%%%%
\subsection{Einstein constraint equations}
\label{sec:Einstein_Constraint} 
%
%Consider now the mixed normal--tangential and purely normal projections
%\[
%\PntG:=\jmath^* i_{\dr}\Dng{0} : \SM \to \XM|_{\dM}, \qquad 
%\PnnG:=\jmath^* i_{\dr} i^{V}_{\dr}\Dng{0} : \SM \to C^{\infty}(\dM).
%\]
%Here, $i^{V}_{\dr}=(i_{\dr}(\cdot)^{T})^{T}$, where $(\cdot)^{T}$ denotes transposition (cf.~\secref{sec:BianchiAppendix}). These can be extended to act on $\Ckm{2}{2}$ in a similar fashion (cf.~\secref{sec:BianchiAppendix}).

We next use the framework of double forms, and in particular the boundary projections in \eqref{eq:boundary_tn} together with the operations in \eqref{eq:E_C_contractions}, to formulate an extended system of Einstein constraint equations on the boundary. In the present notation, the standard expressions from the literature (see, e.g., \cite{CK93}) take the form
\beq
\begin{aligned}
&\PnnG\Ein_{g}
= \mathrm{Sc}_{\gD} - |\Ah_{g}|_{\gD}^{2} + (\trace_{\gD} \Ah_{g})^2, \\[0.5em]
&\PtnG\Ein_{g}
= \delta^{V}_{\gD} \Ah_{g} + d\trace_{\gD}\Ah_{g}, \\[0.5em]
&\PttD\Ein_{g}
= \Ein_{\gD} + \mathrm{C}_{\gD}\PnnG\Rm_{g}
+ \tfrac{1}{2} \mathrm{E}_{\gD}(\Ah_{g} \wedge \Ah_{g}).
\end{aligned}
\label{eq:Einstein_constraints_general}
\eeq
When $n>3$, the third constraint in \eqref{eq:Einstein_constraints_general} can equivalently be expressed in terms of the electric part of the Weyl tensor, $\PnnG\mathrm{Wey}_{g}$, as
\beq
(\tfrac{n-3}{n-2})\PttD\Ein_{g}
= \Ein_{\gD} - \PnnG\mathrm{Wey}_{g}
+ \tfrac{1}{2} \mathrm{E}_{\gD}(\Ah_{g} \wedge \Ah_{g}).
\label{eq:electric_part} 
\eeq
In \secref{sec:constraint_duality}, we derive the third constraint in \eqref{eq:Einstein_constraints_general}, as well as its equivalence with \eqref{eq:electric_part}, by means of the duality in \thmref{thm:duality_Riem}. 

These constraint equations will play an important role at several points in the analysis. In particular, they show that an extended Cauchy data set of the metric $g$ on the boundary determine $\Ric_{g}|_{\dM}$. We now take a moment to make this precise. For the analysis, consider the variation of $g\mapsto\PnnG\Rm_{g}$:
\[
\D(\PnnG\Rm_{g})\sigma:=\dertZero\PnnD_{g+t\sigma}\Rm_{g+t\sigma}. 
\]
This defines a second-order trace operator $\D(\PnnG\Rm_{g}):\SM\rightarrow \SdM$. 
%%%%%%%%%%%%%%%%%
\begin{proposition}
\label{prop:normal_trace_weyl}
For every $g \in \scrM(M)$, the boundary operator
\beq 
\PttD \oplus \D\Ah_{g} \oplus \D(\PnnG\Rm_{g})
: \SM \to \SdM \times \SdM \times \SdM
\label{eq:boundary_operator} 
\eeq 
is a normal system of trace operators.
\end{proposition}
%%%%%%%%%%%%%%%%%
\begin{proof}
We show that the leading normal derivatives of \eqref{eq:boundary_operator} are, up to factors, $
\jmath^*\Dng{0}\oplus\jmath^*\Dng{1}\oplus\jmath^*\Dng{2}$.
Since the tensor pullback $\jmath^* \colon \SM|_{\dM} \rightarrow \SdM$ is surjective, this proves that \eqref{eq:boundary_operator} satisfies the criteria of \defref{def:normal_system}.

By \propref{prop:normal_derivative_DA}, the leading normal derivatives of $\PttD \oplus \D\Ah_{g}$ are $\jmath^* \Dng{0} \oplus \jmath^* \Dng{1}$. It remains to identify the leading normal derivative of $\D(\PnnG\Rm_{g})$. By \eqref{eq:DRm} and the chain rule, this is the same as that of $\PnnG\Hg$; a computation shows that its leading normal derivative is $\jmath^*\Dng{2}=\jmath^*\nabg_{\dr}\nabg_{\dr}$ (\cite[p.~756]{KL21a}).
\end{proof}
%%%%%%%%%%%%%%%%%

Now fix a metric $g \in \scrM(M)$, and consider the level set
\[
\scrM^{00}(M,g) := \{h \in \scrM(M) : \PttD h = \gD, \quad \Ah_{h} = \Ah_g, \quad \PnnD_{h}\Rm_{h} = \PnnG\Rm_{g}\}.
\]
%Then, since $\mathrm{B}_{g}$ is a tensorial isomorphism, \eqref{eq:Einstein_constraints_general} implies that, if we set
%\[
%\gamma:=\gD, \qquad \mathrm{K}:=\Ah_{g}, \qquad \mathrm{N}:=\PnnD_{g}\Rm_{g},
%\]
%then: 

%where $\mathrm{K}^2=\trace_{\gamma}(\mathrm{K}\otimes\mathrm{K})$, with the trace taken over the middle entries. 
By comparing with \eqref{eq:Einstein_constraints_general}, and using the facts that $\Ein_{g}=\mathrm{B}_{g}\Ric_{g}$ and that $\mathrm{B}_{g}$ is a tensorial isomorphism, we obtain
\[
h \in \scrM^{00}(M,g) \Longrightarrow 
\Ric_{h}|_{\dM} = \Ric_{g}|_{\dM}. 
\label{eq:equive_M0_Ric}
\]
We next show that $\scrM^{00}(M,g)$ is a smooth infinite-dimensional submanifold of $\scrM(M)$. For later use, we also record the analogous statement for:
\[
\begin{aligned} 
\scrM^{0}(M,g)
&:= \{h \in \scrM(M)~:~\PttD h = \gD,\quad \Ah_{h} = \Ah_g\}.
\end{aligned} 
\]
%and note that, for every $h\in\scrM^{0}(M,g)$, the constraint equations \eqref{eq:Einstein_constraints_general} give
%\[
%\begin{aligned} 
%&\PnnD_{h}\Ein_{h}
%= \mathrm{Sc}_{\gD} - |\Ah_{g}|_{\gD}^{2} + (\trace_{\gD} \Ah_{g})^2, \\[0.5em]
%&\PtnD_{h}\Ein_{h}
%= \delta^{V}_{\gD} \Ah_{g} + d\trace_{\gD}\Ah_{g}.
%\end{aligned} 
%\]
%%%%%%%%%%%%%%%%%%%%%
\begin{proposition}
\label{prop:submanifolds}
The inclusions $\scrM^{00}(M,g) \subset \scrM^{0}(M,g) \subset \scrM(M)$ define infinite-dimensional smooth Fr\'echet submanifolds, with tangent spaces at each point $h\in \scrM^{00}(M,g)$ given by
\[
\ker(\PttD, \D\Ah_{h}, \D\PnnD_{h}\Rm_{h})
\subset
\ker(\PttD, \D\Ah_{h})
\subset
\SM.
\]
\end{proposition}
%%%%%%%%%%%%%%%%%%%
\begin{proof}
For conciseness, we invoke the Nash--Moser implicit function theorem rather than its elementary Banach-space counterpart. The spaces $\scrM^{0}(M,g)$ and $\scrM^{00}(M,g)$ are level sets of the maps
\[
h\mapsto(\PttD h,\Ah_{h}), \qquad h \mapsto (\PttD h, \Ah_{h}, \PnnD_{h}\Rm_{h}).
\]
These are nonlinear differential operators between spaces of sections of vector bundles, and hence are tame smooth maps~\cite[Cor.~2.2.7]{Ham82}. By \propref{prop:normal_trace_weyl}, their linearizations are normal systems of trace operators. Therefore, by \propref{prop:normality}, they are surjective at every $h \in \scrM(M)$ and admit smooth tame right inverses. It follows from \cite[Thm.~1.1.3]{Ham82} that the maps above are submersions, and hence their level sets are smooth submanifolds, with tangent spaces given by the associated kernels.

Finally, these submanifolds are clearly infinite-dimensional. Indeed, by \propref{prop:normality}, the kernels above are $L^{2}$-dense in $L^{2}\SM$; and the level sets are nonempty, since they contain $g$.
\end{proof}
%%%%%%%%%%%%%%%%%%%%
\section{Hodge theory for linearized Ricci curvature}
%%%%%%%%%%%%%%%%%%%%
\label{sec:Hodge_Einstein}
\subsection{The elliptic pre-complex}
For every $g\in\scrM(M)$ and $\tGamma\in\End(\SM)$, consider now the following diagram:
\beq
\begin{xy}
(-30,0)*+{0}="Em1";
(-15,0)*+{\XM}="E0";
(15,0)*+{\SM}="E1";
(45,0)*+{\SM}="E2";
(75,0)*+{\XM}="E3";
(90,0)*+{0}="E4";
(-15,-25)*+{\XM|_{\dM}}="G0";
(15,-25)*+{\SdM\oplus\SdM}="G1";
(45,-25)*+{\XM|_{\dM}}="G2";
(75,-25)*+{0}="G3";
{\ar@{->}@/^{1pc}/^{\Def}"E0";"E1"};
{\ar@{->}@/^{1pc}/^{\delBianchi}"E1";"E0"};
{\ar@{->}@/^{1pc}/^{\mathrm{B}_{g}\D_{\tGamma}\Ric_{g}}"E1";"E2"};
{\ar@{->}@/^{1pc}/^{\mathrm{B}_{g}\D_{\ttGamma}\Ric_{g}}"E2";"E1"};
{\ar@{->}@/^{1pc}/^{\delBianchi}"E2";"E3"};
{\ar@{->}@/^{1pc}/^{\Def}"E3";"E2"};
{\ar@{->}@/^{0.5pc}/^{0}"E3";"E4"};
{\ar@{->}@/^{0.5pc}/^{0}"E4";"E3"};
{\ar@{->}@/^{0.5pc}/^{0}"Em1";"E0"};
{\ar@{->}@/^{0.5pc}/^{0}"E0";"Em1"};
{\ar@{->}@/_{0pc}/^{|_{\dM}}"E0";"G0"};
{\ar@{->}@/_{0pc}/^{\PttD \oplus \frakT_{g}}"E1";"G1"};
{\ar@{->}@/_{0pc}/^{\PnD_{g}}"E2";"G2"};
{\ar@{->}@/^{0.7pc}/^{\PnD_{g}}"E1";"G0"};
{\ar@{->}@/^{0.7pc}/^{\cdots}"E2";"G1"};
{\ar@{->}@/^{1pc}/^{|_{\dM}}"E3";"G2"};
{\ar@{->}@/_{0pc}/^{0}"E3";"G3"};
\end{xy}
\label{eq:disrupted_complex_diagram}
\eeq
%We remark that in the closed case $\dM=\emptyset$, the same sequence (with $\tGamma=0$, so that $(\mathrm{B}_{g}\D\Ric_{g})^*=\mathrm{B}_{g}\D\Ric_{g}$) was considered in \cite[Sec.~5]{Ham84}. 
%%It is worth In view of \eqref{eq:Killing_exterior_relations}, we choose to work at the lowest level with vector fields rather than 1-forms, as this allows us to reduce certain claims to geometric arguments using flows.

In view of the preceding analysis, this diagram fits into the framework of \eqref{eq:elliptic_complex_diagram}. Our main result concerning it is the following.
%%%%%%%%%%%%%%%%
\begin{proposition}
\label{prop:Eisntein_disrupted}
Let $(M,g)$ be a compact Riemannian manifold with boundary and let $\tGamma\in\End(\SM)$ be a tensorial endomorphism. Then:
\begin{enumerate}[label=(\alph*), itemsep=2pt]
\item The diagram \eqref{eq:disrupted_complex_diagram} is a $1$-elliptic pre-complex in the sense of \defref{def:elliptic_pre_complex}. 
\item If, in addition, $\tGamma\in\End(\SM)$ satisfies the condition
\beq 
\PnD_{g}\mathrm{B}_{g}\D_{\tGamma}\Ric_{g}\sigma=0, \qquad \text{for } \qquad \sigma\in\ker(\PttD,\D\Ah_{g}),
\label{eq:vanishing_RicBody} 
\eeq 
then the diagram \eqref{eq:disrupted_complex_diagram} is a disrupted $3$-elliptic pre-complex in the sense of \defref{def:disrupted_pre_complex}.
\item If, in addition to (b), $\dim M = 3$ or $\dM=\emptyset$, the diagram \eqref{eq:disrupted_complex_diagram} is a $3$-elliptic pre-complex in the sense of \defref{def:elliptic_pre_complex}.
\end{enumerate}
\end{proposition}
%%%%%%%%%%%%%%%%
We take a moment to note that, as discussed in the introduction around \eqref{eq:compatibilityRic}, by applying the chain rule to the constraint equations \eqref{eq:Einstein_constraints_general}, one finds that \eqref{eq:vanishing_RicBody} reduces to the following conditions for every $\sigma\in\SM$: 
\beq
\begin{aligned}
\PnnD_{g}\mathrm{B}_{g}\tGamma\sigma
&= \dertZero \PnnD_{g+t\sigma}\mathrm{B}_{g+t\sigma}(\Ric_{g}|_{\dM}), \\
\PtnD_{g}\mathrm{B}_{g}\tGamma\sigma
&= \dertZero \PtnD_{g+t\sigma}\mathrm{B}_{g+t\sigma}(\Ric_{g}|_{\dM})
= \dertZero \PtnD_{g+t\sigma}(\Ric_{g}|_{\dM}).
\end{aligned}
\label{eq:vanishing_RicBody2} 
\eeq 
In the latter identity, we used the fact that  $\PtnD_{g+t\sigma}\mathrm{B}_{g+t\sigma}=\PtnD_{g+t\sigma}$, which follows by expanding $\mathrm{B}_{g+t\sigma}=\id-\tfrac{1}{2}\trace_{g+t\sigma}(\cdot)(g+t\sigma)$ and using $\PtnD_{g+t\sigma}(g+t\sigma)=0$, as the metric has no mixed tangential-normal components. 

Since all operators here are tensorial, this condition indeed depends only on $\Ric_{g}|_{\dM}$ and $\sigma|_{\dM}$. We remark that these expressions can be further expanded and simplified by opening the variation formulae for the trace and normal vector field, but this is not required for our purposes.

We postpone the proof to \secref{sec:proof_pre_complex} and first discuss the consequences. Since the diagram \eqref{eq:disrupted_complex_diagram} has only three nontrivial segments, the lifted complex provided by \thmref{thm:lifted_complex} has the same form in all cases covered by \propref{prop:Eisntein_disrupted}. Thus, for every $g \in \scrM(M)$, and independently of the case under consideration, the lifted complex is well defined and given by
\beq
\begin{aligned}
&\bA_{0}=\Def \colon \XM \rightarrow \SM, \\
&\bA_{1}=\mathpzc{T}_{g} \colon \SM \rightarrow \SM, \\
&\bA_2=\pzcdel_{g} \colon \SM \rightarrow \XM.
\end{aligned}
\label{eq:lifted_eind}
\eeq
The defining relations for the lifted operators in \thmref{thm:lifted_complex} then become
\beq
\begin{aligned}
\text{for } \alpha = 1:\qquad
& \mathpzc{T}_{g} \Def = 0
&&\qquad \text{on } \ker(\cdot|_{\dM}), \\
&
\mathpzc{T}_{g}
= \mathrm{B}_{g}\D_{\tGamma}\Ric_{g}
&& \qquad\text{on } \ker (\delBianchi), \\[0.2em]
\text{for } \alpha = 2:\qquad
& \pzcdel_{g} \mathpzc{T}_{g} = 0
&&\qquad \text{on } \ker(\PttD, \D \Ah_{g}), \\
&
\pzcdel_{g}
= \delBianchi
&& \qquad\text{on } \ker (\mathpzc{T}_{g}^{*}).
\end{aligned}
\label{eq:lifted_ricci_relations}
\eeq
Although we omit a subscript $\Gamma$ for conciseness, we emphasize that these operators also depend on the choice of $\Gamma$. 

%The adjoint relations \eqref{eq:lifted_adjoint_relations} then reduce to
%\beq
%\begin{aligned}
%\text{for } \alpha = 1:\qquad
%& \delBianchi\mathpzc{T}_{g}^{*} = 0,
%\\[0.2em]
%\text{for } \alpha = 2:\qquad
%& \mathpzc{T}_{g}^{*}\pzcdel_{g}^{*} = 0.
%\end{aligned}
%\label{eq:lifted_adjoints_ricci}
%\eeq
%
%and the lifted Green's formula \eqref{eq:Green_forumula_correction}, associated with \eqref{eq:Green_formula_Einstein}, become: 
%\beq
%\begin{split}
%&\bra \mathpzc{T}_{g}\,\sigma,\eta\ket
%= \bra \sigma,\mathpzc{T}_{g}^*\,\eta\ket
%+ \bra \PttD\sigma,\mathrm{C}_{\gD}\frakT_{g}\eta\ket
%- \bra \frakT_{g}\sigma,\mathrm{C}_{\gD}\PttD\eta\ket,
%\\&\bra\pzcdel_{g}^*X,\sigma\ket=\bra X,\pzcdel_{g}\sigma\ket-\bra X|_{\dM},\PnD_{g}\sigma\ket. 
%\end{split} 
%\label{eq:Green_formula_Ric_lifted}
%\eeq

The Hodge decompositions furnished by \thmref{thm:Hodge_decomposition} now take the following explicit forms, according to the three settings in \propref{prop:Eisntein_disrupted}. 

\textbf{Case $(a)$: Hodge theory for the $1$-elliptic pre-complex.}

For every $g \in \scrM(M)$, the $1$-elliptic pre-complex yields the decompositions
\beq
\begin{aligned}
&\alpha = -1:\quad
&& \XM
= \scrE^{0}_{M}(g,\tGamma)\oplus\image{(\delBianchi)},
\\[0.2em]
&\alpha = 0:\quad
&& \SM
=
\lefteqn{\overbrace{\phantom{\image{\Def|_{\ker (\cdot)|_{\dM}}}\oplus \scrE^{1}(M,g,\tGamma)}}^{\ker(\mathpzc{T}_{g}|_{\ker(\PttD,\frakT_{g})})}}
\image{(\Def|_{\ker (\cdot)|_{\dM}})}
\oplus
\underbrace{\scrE^{1}(M,g,\tGamma) \oplus \image{(\mathpzc{T}_{g}^*})}_{\ker(\delBianchi)} .
\end{aligned}
\label{eq:Hodge_1} 
\eeq
Here we denote the cohomology modules by the script letter $\scrE$, rather than by $\module$, to indicate the Einstein setting. We also include the parameters $(M,g,\tGamma)$ in the notation, to emphasize their dependence on $M$ and on the choices of $g$ and $\tGamma$.

Since these are the first two segments of the complex, the corresponding cohomology modules in \eqref{eq:cohomology_expression} then reduce, by \eqref{eq:lower_cohomolgies}, to
\beq
\begin{aligned}
\scrE^{0}(M,g,\tGamma) &= \ker(\delta_g^*,(\cdot)|_{\dM}),
\\\scrE^{1}(M,g,\tGamma) &= \ker(\mathrm{B}_{g}\D_{\Gamma}\Ric_{g}, \delBianchi, \PttD, \frakT_{g}) \\
&= \ker(\D_{\Gamma}\Ric_{g}, \delBianchi, \PttD, \D\Ah_{g}).
\end{aligned}
\label{eq:cohomology_Einstein_bianchi1}
\eeq
In the second equality we have used that $\mathrm{B}_{g}$ is an isomorphism, and that $\ker(\PttD,\frakT_{g})=\ker(\PttD,\D\Ah_{g})$ by \eqref{eq:DA}; hence the two kernels coincide.

\textbf{Case $(b)$: Hodge theory for the disrupted $3$-elliptic pre-complex.}

If, in addition, $\tGamma$ satisfies \eqref{eq:vanishing_RicBody}, then the disrupted $3$-elliptic pre-complex yields, by \propref{prop:disrupted_valid}, the further decompositions
\beq
\begin{aligned}
&\alpha = 1:\quad
&& \SM
=
\lefteqn{\overbrace{\phantom{\image{\mathpzc{T}_{g}|_{\ker(\PttD,\D\Ah_{g})}}\oplus \scrE^{2}(M,g,\tGamma)}}^{\ker(\pzcdel_{g}|_{\ker\PnD_{g}})}}
\image{(\mathpzc{T}_{g}|_{\ker(\PttD,\D\Ah_{g})})}
\oplus
\underbrace{\scrE^{2}(M,g,\tGamma) \oplus \image{(\pzcdel_{g}^*})}_{\ker (\mathpzc{T}_{g}^*)},
\\[0.6em]
&\alpha = 2:\quad
&& \XM
=
\image{(\pzcdel_{g}|_{\ker(\PnD_{g})})}
\oplus
\scrE^{3}(M,g,\tGamma).
\end{aligned}
\label{eq:Hodge_decompostions_Ricci}
\eeq
By \propref{prop:disrupted_valid}, the module
\[
\scrE^{3}(M,g,\tGamma) = \ker (\pzcdel_{g}^*)
\]
is finite-dimensional. By contrast, the module
\beq 
\scrE^{2}(M,g,\tGamma)=\ker(\mathpzc{T}_{g}^*,\pzcdel_{g},\PnD_{g})
\label{eq:cohomology_2} 
\eeq 
need not be finite-dimensional in general, since it lies at the penultimate level of the disrupted complex. Nevertheless, let
\[
\tbP_{\g}:\SM\rightarrow \SM
\] 
denote the $L^{2}$-orthogonal projection onto $\ker\delBianchi$ in the Hodge decomposition \eqref{eq:Hodge_1}. By \thmref{thm:Hodge_decomposition}, this projection is a Green operator of order and class zero. Then \propref{prop:coadjoint} gives the reduced expression
\beq
\begin{split}
\scrE^{2}(M,g,\tGamma)&= \ker(\delBianchi,\tbP_{g}\mathrm{B}_{g}\D_{\ttGamma}\Ric_{g}, \PnD_{g}). 
\end{split}
\label{eq:cohomology_Einstein_bianchi2}
\eeq

\medskip
\noindent
\textbf{Case $(c)$: Hodge theory for the $3$-elliptic pre-complex.}

If, in addition, $\dim M = 3$ or $\dM=\emptyset$, then \propref{prop:Eisntein_disrupted} shows that the diagram defines a genuine elliptic pre-complex, since the conditions of \defref{def:elliptic_pre_complex} are fully satisfied. Consequently, the decompositions in \eqref{eq:Hodge_decompostions_Ricci} remain valid verbatim, and the module $\scrE^{2}(M,g,\tGamma)$ is finite-dimensional.
%%%%%%%%%%%%%%
\subsection{Proofs of main results in \secref{sec:main_results_intro}}
\label{sec:main_proof}
By means of the isomorphism $\mathrm{B}_{g}^{-1} \colon \SM \to \SM$ (we remind that $\dim M \geq 3$), we can now define
\beq
\begin{aligned}
\pzcD_{\tGamma} \mathpzc{Ric}_{g}
:= \mathrm{B}_{g}^{-1}\mathpzc{T}_{g}
\colon \SM \to \SM.
\end{aligned}
\label{eq:lifted_ricci}
\eeq
We retain $\pzcdel_g \colon \SM \to \XM$ as defined in \eqref{eq:lifted_operators}. We show that these definitions of $\pzcD_{\tGamma} \mathpzc{Ric}_{g}$ and $\pzcdel_{g}$ now yield \thmref{thm:lifting_intro}.
%%%%%%%%%%%%%%%%%
\begin{PROOF}{\thmref{thm:lifting_intro}}
The definition in \eqref{eq:lifted_ricci}, together with the relations in \eqref{eq:lifted_ricci_relations}, translates directly into the required properties of $\pzcD_{\tGamma}\mathpzc{Ric}_{g}$ and $\pzcdel_{g}$ stated in \thmref{thm:lifting_intro}, namely
\[
\begin{aligned}
&\;\pzcD_{\Gamma}\pzcDRic \Def = 0
&& \quad \text{on } \quad
&& \ker(\cdot|_{\dM}), \\[1pt]
&\;\pzcD_{\Gamma}\pzcDRic := \D_{\Gamma}\Ric_{g}
&& \quad \text{on } \quad
&& \ker (\delBianchi), \\[3pt]
&\;\pzcdel_{g} \pzcD_{\Gamma}\pzcDRic = 0
&& \quad\text{on }\quad
&& \ker(\PttD, \D \Ah_{g}), \\[1pt]
&\;\pzcdel_{g}\mathrm{B}_{g} := \delBianchi\mathrm{B}_{g}
&& \quad\text{on }\quad
&& \ker(\pzcD_{\Gamma}\pzcDRic^{*}).
\end{aligned}
\]
The uniqueness clause is inherited from the corresponding uniqueness statement in \thmref{thm:lifted_complex} for $\mathpzc{T}_{g}$ and $\pzcdel_{g}$ due to the fact that $\mathrm{B}_{g}$ is an isomorphism, and $\ker(\PttD,\frakT_{g})=\ker(\PttD,\D\Ah_{g})$ by \eqref{eq:DA}. 
%Finally, because $\mathrm{B}_{g}$ is tensorial and $\mathpzc{T}_{g}-\mathrm{B}_{g}\D_{\Gamma}\Ric_{g}$ is a term of order and class zero, the definitions in \eqref{eq:lifted_ricci} imply that $\pzcD_{\tGamma} \mathpzc{Ric}_{g}-\D_{\tGamma}\Ric_{g}$ is likewise of order and class zero, completing the proof. 
\end{PROOF}
%%%%%%

The remaining proofs are now consequences of the Hodge decompositions produced in \secref{sec:Hodge_Einstein}.
%%%%%%%%%%%%%%%%%%%
\begin{PROOF}{\thmref{thm:Saint_Venant_opening}}
The Hodge decomposition \eqref{eq:Hodge_1} implies that
\[
\ker(\mathpzc{T}_{g}|_{\ker(\PttD, \frakT_{g})})=\image\bigl(\Def|_{\ker(\cdot|_{\dM})}\bigr)\oplus \scrE^{1}(M,g,\tGamma).
\]
Since $\mathrm{B}_{g}$ is an isomorphism, the equation $\mathpzc{T}_{g}\sigma = 0$ is equivalent, by \eqref{eq:lifted_ricci}, to $\pzcD_{\tGamma}\mathpzc{Ric}_{g}\sigma = 0$. Moreover, \eqref{eq:DA} gives $\ker(\PttD,\frakT_{g})=\ker(\PttD,\D\Ah_{g})$. Hence
\[
\ker(\mathpzc{T}_{g}|_{\ker(\PttD, \frakT_{g})}) = \ker(\pzcD_{\tGamma} \mathpzc{Ric}_{g}|_{\ker(\PttD, \D\Ah_{g})}). 
\]
Comparing this identity with the expression in \eqref{eq:cohomology_Einstein_bianchi1} completes the proof.

\end{PROOF}
%%%%%%%%

\begin{PROOF}{\thmref{thm:Saint_Venant_opening2}}
Applying \thmref{thm:cohomology_dirichlet} with $\alpha = 0$ to the $1$-elliptic pre-complex shows that, for a given $\sigma \in \SM$, the system
\[
\begin{aligned}
&\Def X = \sigma \qquad  &&\text{in } M, \\
&X|_{\dM} = 0 
\qquad &&\text{on } \dM
\end{aligned}
\]
admits a solution if and only if
\[
\mathpzc{T}_{g}\sigma = 0, \qquad
\PttD\sigma = 0, \qquad \frakT_{g}\sigma=0, \qquad
\sigma \perp_{L^2} \scrE^{1}(M,g,\tGamma).
\]
As above, the condition $\mathpzc{T}_{g}\sigma = 0$ is equivalent to $\pzcD_{\tGamma}\mathpzc{Ric}_{g}\sigma = 0$ by \eqref{eq:lifted_ricci}, while $\ker(\PttD,\frakT_{g})=\ker(\PttD,\D\Ah_{g})$ by \eqref{eq:DA}, yielding the asserted criterion.

\end{PROOF}
%%%%%%%%%%%%

\begin{PROOF}{\thmref{thm:Einstein_boundary_opening}} 
Substituting the definition of the lifted Ricci operator \eqref{eq:lifted_ricci}, the Hodge decomposition in \eqref{eq:Hodge_decompostions_Ricci} gives
\[
\begin{split} 
\ker(\pzcdel_{g}|_{\ker \PnD_{g}})=\mathrm{im}\bigl(\mathrm{B}_{g}\pzcD_{\tGamma}\pzcDRic|_{\ker(\PttD,\D\Ah_{g})}) \oplus\scrE^{2}(M,g,\tGamma). 
\end{split} 
\]
The required explicit expression for $\scrE^{2}(M,g,\tGamma)$ is the one obtained in \eqref{eq:cohomology_Einstein_bianchi2}. 

%The finite dimensionality for the cases $\dim M = 3$ or $\partial M = \emptyset$ was established in the discussion of case (c) above.
TThe solvability statement then follows directly, since \thmref{thm:cohomology_dirichlet} remains applicable to the disrupted elliptic pre-complex by \propref{prop:disrupted_valid}. In terms of the lifted operators in \eqref{eq:lifted_ricci_relations}, given $\tilde{T} \in \SM$, the system
\beq
\begin{aligned}
&\mathrm{B}_{g}\D_{\tGamma} \Ric_{g} \sigma = \tilde{T}, \qquad
&&\delBianchi\sigma = 0
\qquad &&\text{in } M, \\[0.3em]
&\PttD \sigma = 0, \qquad
&&\frakT_g \sigma = 0,
\qquad &&\text{on } \dM,
\end{aligned}
\label{eq:Einstein_boundary_Covarian_proof}
\eeq
admits a solution $\sigma \in \SM$ if and only if
\[
\pzcdel_{g}\tilde{T} = 0, \qquad
\PnD_{g}\tilde{T} = 0, \qquad
\tilde{T} \perp_{L^2} \scrE^{2}(M,g,\tGamma).
\]
Since $\ker(\PttD,\frakT_{g})=\ker(\PttD,\D\Ah_{g})$, substituting $\tilde{T}=\mathrm{B}_{g}T$ for $T\in \SM$, as in the statement of the theorem, gives the desired result.
\end{PROOF} 
%%%%%%%%%%%%
\subsection{Proof of \propref{prop:Eisntein_disrupted}} \label{sec:proof_pre_complex}
%%%%%%%%%%%%%%%%%%%  
The proof that the diagram \eqref{eq:disrupted_complex_diagram} is an elliptic pre-complex, in the sense specified by \propref{prop:Eisntein_disrupted}, is organized in several steps, verifying the ingredients required by \defref{def:elliptic_pre_complex} and \defref{def:disrupted_pre_complex} one by one. 
%%%%%%%%%%%%%%%%%%%%%%%%%%%
\begin{proposition}
The operators in the diagram \eqref{eq:disrupted_complex_diagram} satisfy the Green's formulae and the normality conditions required in part (a) of \defref{def:elliptic_pre_complex}.
\end{proposition}
%%%%%%%%%%%%%%%%%%%%
\begin{proof}
By the definition of a normal system of trace operators in \defref{def:normal_system}, it suffices, for the verification of normality, to identify the leading normal derivative of each summand in the relevant boundary systems.

We begin with the level $\alpha=0$. Reading from the diagram \eqref{eq:disrupted_complex_diagram}, the corresponding boundary operators are
\[
B_0 = (\cdot)|_{\dM} = \Dng{0} 
\Textand
B_0^* = \PnD_{g} = (i_{\dr}\Dng{0})^{\sharp_g}.
\]
The relevant Green's formula is \eqref{eq:Green_forumla_killing}. These boundary systems are normal of class $r=1$, since $\id$ and $i_{\dr}$ is surjective.

At the level $\alpha=1$, the boundary operators appearing in \eqref{eq:disrupted_complex_diagram} are
\[
B_1 = \PttG \oplus \frakT_g
\Textand
B_1^* = (-\mathrm{C}_{\gD}\frakT_g) \oplus \mathrm{C}_{\gD}\PttG.
\]
By \propref{prop:normal_derivative_DA}, their leading normal terms are
\[
\jmath^*\Dng{0} \oplus
\jmath^*\Dng{1} 
\Textand
\mathrm{C}_{\gD}\jmath^*\Dng{0} \oplus
\mathrm{C}_{\gD}\jmath^*\Dng{1}. 
\]
Both systems are normal of class $r=2$, again by the surjectivity of the boundary pullback, together with the fact that $\mathrm{C}_{\gD}$ is an isomorphism.

Finally, we note that the normality conditions and Green's formula required at the level $\alpha=2$ are identical to the those at $\alpha=0$, with the roles of $B_0$ and $B_0^*$ interchanged.
\end{proof} 
%%%%%%%%%%%%%%%%%%%%

\begin{proposition}
\label{prop:order_reduction_Einstein}
In the setting of \propref{prop:Eisntein_disrupted}, the diagram \eqref{eq:disrupted_complex_diagram} satisfies the order-reduction properties required in part (b) of \defref{def:elliptic_pre_complex}. 
\end{proposition}
%%%%%%%%%%%%%%%
\begin{proof} 
The only nontrivial levels are $\alpha = 1$ and $\alpha = 2$; the remaining levels are padded by zero operators on one side or the other, so the claim there holds trivially.

\textbf{Order reductions for case (a) in \propref{prop:Eisntein_disrupted}.} 

For every $g\in\scrM(M)$, the required order-reduction property at the level $\alpha=1$ amounts to
\[
\ord(\mathrm{B}_{g}\D_{\tGamma}\Ric_{g}\Def) \leq \ord(\Def), 
\qquad 
\ker(\cdot|_{\dM})\subseteq \ker((\PttD\oplus\frakT_{g})\Def). 
\]
By \eqref{eq:DA}, the inclusion on the right is equivalent to
\beq 
\ker(\cdot|_{\dM})\subseteq \ker((\PttD\oplus\D\Ah_{g})\Def),
\label{eq:second_fudnemental_Lie} 
\eeq 
which follows from the linearized symmetries of the boundary data. Although this can be found in the literature, e.g., \cite[pp.~5--6]{And08}, we recall the argument for completeness. Consider the nonlinear map
\[
g\mapsto(\PttG g,\Ah_{g}).
\]
This map is invariant under diffeomorphisms $\varphi: M \to M$ satisfying $\varphi|_{\dM} = \id$, namely
\beq
(\PttG\varphi^*g,\Ah_{\varphi^*g}) = (\PttG g,\Ah_{g}).
\label{eq:boundary_symmetries_intro} 
\eeq
The infinitesimal generator of a family $\varphi_t:M\rightarrow M$ of such diffeomorphisms is a vector field $X\in\XM$ satisfying $X|_{\dM}=0$. Linearizing the invariance relation \eqref{eq:boundary_symmetries_intro} at $t=0$ gives
\[
(\PttG \oplus \D\Ah_{g})(\dertZero \varphi_{t}^* g) = 0
\]
for every such vector field. Since
\[
\dertZero \varphi_t^* g  = \calL_X g = 2\Def{X}
\]
by the definition of the Killing operator, this proves the inclusion
\[
\ker((\cdot)|_{\dM})\subseteq\ker((\PttG\oplus\D\Ah_{g})\Def).
\]

It remains to verify the order inequality. Since $\tGamma$ and $\mathrm{B}_{g}$ are tensorial, it suffices to show that
\[
\ord(\D\Ric_{g}\Def)\leq \ord(\Def).
\]
This is also known (e.g., \cite[Sec.~5]{Ham84}), but we again recall the argument for completion. The Ricci curvature is equivariant under pullback by local diffeomorphisms $\phi : M \to M$:
\[
\Ric_{\phi^*g}=\phi^*\Ric_{g}.
\]
Since $\Def{X} = \tfrac{1}{2} \calL_{X} g$, linearizing this identity along a one-parameter family of local diffeomorphisms $\phi_{t}$ generated by $X$ gives
\[
\D\Ric_{g} \Def{X} = \tfrac{1}{2} \calL_{X} \Ric_{g}.
\]
Thus, although the operator $X\mapsto\D\Ric_g\Def{X}$ is a priori of third order, it agrees with the first-order operator $X\mapsto\tfrac{1}{2}\calL_{X} \Ric_g$. Hence
\[
\ord(\D\Ric_g\Def)\leq \ord(\Def),
\]
as required.

\textbf{Order reductions for cases (b)--(c) in \propref{prop:Eisntein_disrupted}.} 

At the level $\alpha=2$, the required order-reduction property becomes
\[
\ord(\delBianchi\mathrm{B}_{g}\D_{\tGamma}\Ric_{g}) \leq \ord(\mathrm{B}_{g}\D_{\tGamma}\Ric_{g}), 
\qquad 
\ker(\PttD,\frakT_{g}) \subseteq \ker(\PnD_{g}\mathrm{B}_{g}\D_{\tGamma}\Ric_{g}).
\]
Again, since $\tGamma$ and $\mathrm{B}_{g}$ are tensorial, the order inequality reduces to proving
\[
\ord(\delBianchi\mathrm{B}_{g}\D\Ric_{g}) \leq \ord(\D\Ric_{g}).
\]
This follows directly by linearizing the differential Bianchi identity \eqref{eq:gauge_equivariance} and applying the chain rule; see again \cite[Sec.~5]{Ham84}. Indeed, if $g_{t}\in\scrM(M)$ is a family of metrics with $g_0=g$ and $\dertZero g_{t}=\sigma$, then linearizing
\[
\delta_{g_{t}}\mathrm{B}_{g_t}\Ric_{g_{t}}=0
\]
gives
\[
\delBianchi\mathrm{B}_{g}\D\Ric_{g}\sigma
= -(\dertZero\delta_{g_{t}}\mathrm{B}_{g_t}\Ric_g).
\]
The right-hand side is a first-order differential operator in $\sigma$, whereas $\D\Ric_{g}$ is of second order.

We now turn to the boundary inclusion. By \eqref{eq:DA}, it is enough to prove
\[
\ker(\PttD,\D\Ah_{g}) \subseteq \ker(\PnD_{g}\mathrm{B}_{g}\D_{\tGamma}\Ric_{g})
= \ker(\PnnG\mathrm{B}_{g}\D_{\tGamma}\Ric_{g}, \PntG\mathrm{B}_{g}\D_{\tGamma}\Ric_{g}),
\]
where, as in \eqref{eq:Pn_res_iso}, we have identified $\PnD_{g}$ with $\PnnG\oplus\PntG$. We prove this under the assumption \eqref{eq:vanishing_RicBody} on $\tGamma$.

Recall the submanifold $\scrM^{0}(M,g)$ from \propref{prop:submanifolds}, and let $g_{t} \in \scrM^{0}(M,g)$ be a smooth family such that $g_{0}=g \in \scrM^{00}(M,g)\subset \scrM^{0}(M,g)$ and $\dertZero g_{t}=\sigma \in \ker(\PttD, \D\Ah_{g})$. Expanding
\[
\D_{\tGamma}\Ric_{g}=\D\Ric_g+\tGamma
\]
and applying the chain rule, we obtain
\[
\begin{split}
\PnnD_{g}\mathrm{B}_{g}\D_{\tGamma}\Ric_{g}\sigma
&= \PnnD_{g}\mathrm{B}_{g}\D\Ric_{g}\sigma+\PnD_{g}\mathrm{B}_{g}\tGamma\sigma \\
&= \dertZero(\PnnD_{g_{t}}\Ein_{g_{t}})
- \dertZero(\PnnD_{g+t\sigma}\mathrm{B}_{g+t\sigma}\Ric_{g})
+\PnnD_{g+t\sigma}\mathrm{B}_{g+t\sigma}\tGamma\sigma \\
&= 0.
\end{split}
\]
Here we have used the assumption \eqref{eq:vanishing_RicBody}, which gives
\[
\dertZero(\PnnD_{g+t\sigma}\mathrm{B}_{g+t\sigma}\Ric_{g})
= \PnnD_{g+t\sigma}\mathrm{B}_{g+t\sigma}\tGamma\sigma,
\]
and also the fact that, since $g_{t}\in\scrM^{0}(M,g)$, the Einstein constraint equations \eqref{eq:Einstein_constraints_general} imply
\[
\dertZero\PnnD_{g_{t}}\Ein_{g_{t}}
= \dertZero (\mathrm{Sc}_{\gD} - |\Ah_{g}|_{\gD}^{2} + (\trace_{\gD} \Ah_{g})^2)=0.
\]
The proof that $\PtnG\mathrm{B}_{g}\D_{\tGamma}\Ric_{g}\sigma=0$ is identical, using the corresponding $\PtnG$-condition. This proves the required inclusion.
%%%%%%%%%%%%%%%%%%%%%
\end{proof}
%%%%%%%%%%%%%%%%%%%%

We next establish the required overdetermined ellipticities.
%%%%%%%%%%%%%%%%%%%%
\begin{proposition}
\label{prop:overdetermined_elliptiicty_einstein}
For every $g \in \scrM(M)$, the overdetermined ellipticities required in part {(c)} of \defref{def:elliptic_pre_complex} and in \propref{prop:Eisntein_disrupted} are satisfied for the diagram \eqref{eq:disrupted_complex_diagram}, possibly except at the level $\alpha=2$.
\end{proposition}
%%%%%%%%%%%%%%%%%%%%
\begin{proof}
For $\alpha=0$ and $\alpha = 3$, the would-be overdetermined elliptic systems are
\[
A_{0}\oplus B_{0}=\Def\oplus(\cdot)|_{\dM}, \qquad A_{2}^*=\Def,
\]
where $A_{2}^*$ is not supplemented by additional adjoints or boundary terms, since $A_{\alpha}=0$ when either $\alpha \geq 3$ or $\alpha=-1$. These systems are overdetermined elliptic because the Killing operator is overdetermined elliptic even without boundary conditions. More concretely, following the equivalence of overdetermined ellipticity to a-priori estimates \secref{sec:OD}, this is precisely the content of Korn's inequality:
\[
\|X\|_{s+1} \leq C \bigl(\|\Def X\|_{s} + \|X\|_{0} \bigr), \qquad s\in\Nzero.
\]

Since $\alpha=2$ is the disrupted level, it remains only to address $\alpha = 1$. There, the would-be overdetermined elliptic system is
\[
A_{1}\oplus A_{0}^*\oplus B_{1}
=
\mathrm{B}_{g}\D_{\tGamma}\Ric_{g}\oplus\delBianchi\oplus(\PttD\oplus\frakT_{g}).
\]
Because $\mathrm{B}_{g}$ and $\Gamma$ are tensorial, this system is equivalent, for ellipticity, to
\beq 
\mathrm{B}_{g}\D\Ric_{g}\oplus\delBianchi\oplus(\PttD\oplus\frakT_{g}).
\label{eq:overdetermined_Ricci} 
\eeq
The overdetermined ellipticity of \eqref{eq:overdetermined_Ricci} can be established in several ways. One may verify the Lopatinskii--Shapiro condition directly, following the technical analysis outlined in \cite[Sec.~5.5]{KL23} and \cite[Ch.~4.1]{Led25B}; see also the computations in \cite[Prop.~3.1]{And08}.

Here, instead, we use the duality between the Einstein tensor and the Riemann tensor stated in \thmref{thm:duality_Riem}. Since the verification is rather technical, and is most naturally carried out in the language of double forms, we defer the computation to \secref{sec:ellipticity}.
\end{proof}
%%%%%%%%%%%%%%%%%%%%

The remaining case in part (c) of \propref{prop:Eisntein_disrupted} is addressed by the following.
%%%%%%%%%%%%%%%%%%%%%%%%%
\begin{proposition}
\label{prop:disrupted_OD_einstein}
When either $\dim M = 3$ or $\dM=\emptyset$, the system
\[
A_{2}\oplus A_{1}^*\oplus B_{2}=\delBianchi\oplus\mathrm{B}_{g}\D_{\ttGamma}\Ric_{g} \oplus \PnD_g
\]
is overdetermined elliptic.
\end{proposition}
%%%%%%%%%%%%%

Unless $\dim M = 3$, a count of degrees of freedom shows that this system does not supply enough boundary equations for the problem to be determined, let alone overdetermined: the operator $\PnD_{g}$ contributes only $n:=\dim M$ degrees of freedom, whereas the unknown lies in $\SM$ and so has $\tfrac{n(n+1)}{2}$ degrees of freedom. Consequently, in general, \eqref{eq:disrupted_complex_diagram} does not define an elliptic pre-complex in the strict sense, but rather a disrupted one.

%%%%%%%%%%%%%%%%%%%%%%%%
\begin{proof}
Since $\mathrm{B}_{g}$ and $\Gamma$ are tensorial and $\mathrm{B}_{g}\D\Ric_{g}$ is formally self-adjoint due to \eqref{eq:Green_formula_Einstein}, it suffices to check the overdetermined ellipticity of
\[
\delBianchi \oplus \mathrm{B}_{g}\D\Ric_{g} \oplus \PnD_g.
\]
When $\dM=\emptyset$, the boundary operators vanish and the statement reduces to the overdetermined ellipticity of $\delBianchi \oplus \mathrm{B}_{g}\D\Ric_{g}$ (since $\D\Ric_{g}$ is formally self-adjoint by \eqref{eq:Green_formula_Einstein}), which is established in the previous proposition.

When $\dim M = 3$, since $\D\Ric_g$ determines $\D\Rm_{g}$ up to lower-order terms, the required overdetermined ellipticity is equivalent to that of the system
\[
\delta_{g} \oplus \Hg \oplus \PnD_{g},
\]
where we expanded $\D\Rm_{g}$ as in \eqref{eq:DRm} and neglected lower order terms. A proof that this system is overdetermined elliptic is given in \cite[Sec.~5.5]{KL23}.
\end{proof} 
\section{Cohomology analysis} 
\label{sec:uniqueness_result}

\subsection{First vanishing results}

We begin by recording two vanishing theorems which can be retained from existing results in the literature. 

%%%%%%%%%%%%%%%%%%%%
\begin{proposition}
\label{prop:vanish_killing}
For $M$ with nonempty boundary and any $g \in \scrM(M)$,
\[
\scrE^{0}(M,g,\tGamma) = \ker(\Def,(\cdot)|_{\dM}) = \{0\}.
\]
\end{proposition}
%%%%%%%%%%%%%%%%%%%%
\begin{proof}
The claim follows from the standard fact that if $S \subset M$ is a codimension-one submanifold and $X \in \ker (\Def)$ satisfies $X|_{S} = 0$, then $X=0$. See, for example, \cite[Thm.~8.1.5]{Pet16}.
\end{proof}
%%%%%%%%%%%%%%%%%%%%

The second shows that our framework also recovers the existing uniqueness results for Einstein manifolds with boundary in the literature (cf.~\cite[Prop.~2.4]{And08} or \cite[Prop.~3.3]{AH22}).
%%%%%%%%%%%%
\begin{proposition}
\label{prop:vanish_Ricci}
Let $M$ be a manifold with nonempty boundary such that $\pi_{1}(M,\dM)=\{0\}$. Then, for every $g \in \scrM(M)$ satisfying $\Ric_{g}=\lambda g$, and for $\tGamma=-\lambda\id$ with $\lambda\in\mathbb{R}$,
\[
\scrE^{1}(M,g,\tGamma) =\ker(\D\Ric_{g}-\lambda\id,\delBianchi,\PttD,\D\Ah_{g})= \{0\}.
\]
\end{proposition}
%%%%%%%%%%%%%%%%%%%
\begin{proof}
By either \cite[Prop.~2.4]{And08} or \cite[Prop.~3.3]{AH22}, the assumptions $\pi_{1}(M,\dM)=\{0\}$ and $\Ric_{g}=\lambda g$ imply that if $\sigma \in \SM$ satisfies
\[
\begin{gathered}
\D\Ric_{g}\sigma-\lambda\sigma=0,\\
\PttD\sigma=0, \qquad \D\Ah_{g}\sigma=0,
\end{gathered}
\]
then $\sigma=\Def X$ for some $X \in \XM$ with $X|_{\dM}=0$. We apply this result to $\sigma \in \scrE^{1}(M,g,\tGamma)$ with $\tGamma=-\lambda\id$, which is justified by the explicit expression \eqref{eq:cohomology_Einstein_bianchi1}. The condition $\delBianchi\sigma=0$ then becomes $\delBianchi\Def X=0$. Together with $X|_{\dM}=0$ and the Green's formula \eqref{eq:Green_forumla_killing}, this implies that $\Def{X}=0$ and $X|_{\dM}=0$; hence $X=0$ by the previous proposition. 
\end{proof}

\subsection{Bochner technique for the first cohomology}

Throughout the next two sections, we use commutation formulae between the exterior covariant derivatives and their adjoints \eqref{eq:exterior1}--\eqref{eq:exterior2} with the boundary projections in \eqref{eq:boundary_tn}. These formulae were derived and detailed with care in \cite[Sec.~4]{KL21a}. Although somewhat technical in appearance, they arise from nothing but straightforward applications of the Gauss and Weingarten formulae, together with careful tracking of how covariant derivatives commute with boundary pullback. We therefore omit their statement and derivation here, and reference them directly when needed.

We shall also depart from the notation $\bra\cdot,\cdot\ket$ and write $L^{2}$-couplings of tensor fields explicitly, as is customary in Bochner formulae (cf.~\cite{Pet16,PW21}). In particular, we shall use the following Green's formula for the covariant derivative, valid for arbitrary tensor fields (cf.~\cite[Sec.~2.5, p.~694]{KL21a}):
\beq
\int_{M}({\nabg}^*\omega,\eta)_{g}\,d\Volume_{g}
=\int_{M}(\omega,\nabg\eta)_{g}\,d\Volume_{g}
+\int_{\dM}(\omega,dr\otimes\eta)_{g}\,d\Volume_{\gD}.
\label{eq:Greens_outward_normal}
\eeq

The first Bochner formula we derive is the following.
%%%%%%%%%%%%%%%%%%%%%
\begin{proposition}
\label{prop:Bochner1}
Let $g \in \scrM(M)$, $\tGamma\in\End(\SM)$, $\sigma\in\scrE^{1}(M,g,\tGamma)$, and 
\beq
\begin{gathered} 
v:=\PtnG\sigma\in\Omega^{1}(\dM), \qquad
\phi:=\PnnG\sigma\in C^{\infty}(\dM), \\
\tau:=\trace_{g}\sigma\in C^{\infty}(M).
\label{eq:v_phi_tau}
\end{gathered} 
\eeq
Then the following formula holds:
\[
\begin{split} 
&\int_M |\nabg\sigma|_{g}^2+|d\tau|_{g}^{2}+(\mathcal (\calR_{g}+\,\bar{\tGamma})\sigma,\sigma)_{g}\,d\Volume_{g}=
\\&\qquad\qquad
\int_{\dM}
\trace_{\gD}\mathrm{A}_{g}\,\phi^2
+2\,\trace_{\gD}\mathrm{A}_{g}\,|v|_{\gD}^2
+2\,(\Sh_{g}v,v)_{\gD}
\,d\Volume_{\gD}.
\end{split} 
\]
where $\bar{\tGamma}:\SM\rightarrow \SM$ is given by
\beq 
\bar{\tGamma}\sigma:=2\,\tGamma\sigma+\trace_{g}(\tGamma\sigma)g.
\label{eq:Gamma_bar}   
\eeq 
\end{proposition}
%%%%%%%%%%%%%%%%%%%%%%%%%%%%%%%%%%

As is classical in applications of Bochner formulae, this identity immediately yields a vanishing result whenever the left-hand side is nonnegative and the right-hand side is nonpositive, recovering \thmref{thm:vanishing_intro}:
\begin{PROOF}{\thmref{thm:vanishing_intro}}
Assume that $\mathrm{A}_{g} \leq 0$, so that the shape operator $\Sh_{g}:\plWkm{0}{1}\rightarrow \plWkm{0}{1}$ is non-positive. Then under $\calR_{g}+\tGamma\geq 0$, the two sides of the identity in \propref{prop:Bochner1} have opposite signs, hence must both vanish. Consequently, $\nabg\sigma=0$ and $(\calR_{g}+\,\bar{\tGamma})\sigma=0$ on $M$, while on $\dM$ 
\[
\trace_{\gD}\mathrm{A}_{g}\,\phi^{2}=0
\Textand
\trace_{\gD}\mathrm{A}_{g}\,|v|^{2}_{\gD}=0.
\]

If $(\calR_{g}+\,\bar{\tGamma})|_{p} > 0$ at some point $p \in M$, then $\sigma=0$ at $p$. Since $\nabg\sigma=0$ on all of $M$, parallel transport gives $\sigma=0$ identically. Alternatively, if $\trace_{\gD}\mathrm{A}_{g}|_{q}\neq 0$ at some point $q \in \dM$, then $\phi=0$ and $v=0$ at $q$. Together with the condition $\PttD\sigma=0$, which holds for every $\sigma \in \scrE^{1}(M,g,\tGamma)$, and by unraveling the definitions of $\phi,v$ in \eqref{eq:v_phi_tau}, this implies that $\sigma=0$ at $q$. Since $\sigma$ is parallel, it follows again that $\sigma=0$ throughout $M$.
\end{PROOF}
%%%%%%%%%%%%%%%%%%%%%%

\begin{PROOF}{\propref{prop:Bochner1}}
By the explicit expression for $\scrE^{1}(M,g,\tGamma)$ in \eqref{eq:cohomology_Einstein_bianchi1}, and since $\delta_{g}\sigma=0$, the Ricci variation formula \eqref{eq:Ricci_variation} shows that the equation $\D_{\Gamma}\Ric_{g}\sigma=0$ reduces to
\[
{\nabg}^*\nabg\sigma-\nabg{d}\trace_{g}\sigma+\calR_{g}\sigma+2\,\tGamma\sigma=0.
\]
Integrating this equation against $\sigma$ gives
\beq
\int_{M}({\nabg}^*\nabg\sigma-\nabg{d}\trace_{g}\sigma+\calR_{g}\sigma+2\,\tGamma\sigma,\sigma)_{g}\,d\Volume_{g}=0.
\label{eq:interior_Bochner_1}
\eeq
Using Green's formula \eqref{eq:Greens_outward_normal}, together with $\delta_{g}\sigma={\nabg}^*\sigma=0$, the interior term produced by integration by parts of $\nabg d\trace_{g}\sigma$ vanishes, leaving: 
\[
\begin{split} 
&\int_M |\nabg\sigma|_{g}^2+(\mathcal (\calR_{g}+2\,\tGamma)\sigma,\sigma)_{g}\,d\Volume_{g}=
\\&\qquad\qquad-\int_{\dM}(\nabg_{\dr}\sigma,\sigma)_{g}\,d\Volume_{\gD}-\int_{\dM}(d\trace_{g}\sigma,i_{\dr}\sigma)_{g}\,d\Volume_{\gD}.
\end{split} 
\]
We now analyze the boundary terms. We first note that, on the boundary, expanding the trace \eqref{eq:trace} in an orthonormal frame with $e_i=\dr$ gives
\beq 
(\trace_{g}\sigma)|_{\dM}
=\sum_{i} i^{V}_{e_i} i_{e_i}\sigma
= i^{V}_{\dr} i_{\dr}\sigma + \sum_{i\ne 1} i^{V}_{e_i} i_{e_i}\sigma
= \PnnG\sigma + \trace_{\gD}(\PttD\sigma).
\label{eq:trace_trick}
\eeq 
We remark that the latter computation will be used repeatedly applied on different tensors through the analysis. 

Now, by decomposing the boundary integrands orthogonally as in \eqref{eq:Pn_res_iso} into their tangential and normal components, fiberwise as in \cite[p.~704]{KL21a}, and using the symmetry of $\sigma$ together with the condition $\PttD\sigma=0$, we obtain
\beq 
\begin{split}
&(\nabg_{\dr}\sigma,\sigma)_{g}
=2(\PtnG\nabg_{\dr}\sigma,\PtnG\sigma)_{\gD}+(\PnnG\nabg_{\dr}\sigma)\cdot(\PnnG\sigma),
\\
&(d\trace_{g}\sigma,i_{\dr}\sigma)_{g}
=(\nabg_{\dr}\trace_{\g}\sigma)(\PnnG\sigma)+(d\PnnG\sigma,\PtnG\sigma)_{\gD}.
\end{split}
\label{eq:orthogonal_boundary_bochner} 
\eeq
Moreover, since $\trace_{g}$ commutes with covariant derivatives, the same computation above in \eqref{eq:trace_trick} applied to $\nabg_{\dr}\sigma$ gives
\beq 
\begin{split} 
\nabg_{\dr}\trace_{g}\sigma=\trace_{g}\nabg_{\dr}\sigma=\PnnG\nabg_{\dr}\sigma+\trace_{\gD}\PttD\nabg_{\dr}\sigma. 
\end{split} 
\label{eq:derivative_trace_boch} 
\eeq
Next, using \eqref{eq:DA}, we have on $\dM$
\[
\PttD\sigma=0, \qquad 
\frakT_{g}\sigma=0, \qquad 
(\delBianchi\sigma)|_{\dM}=0.
\]
Applying $\PttG$ and $\PtnG$ to the last boundary identity, and rewriting the resulting expressions in terms of the notation \eqref{eq:v_phi_tau} by means of the boundary commutation formulae in \cite[pp.~705--706]{KL21a} and the expression for $\frakT_{g}$ in \eqref{eq:frakT_explicit}, we obtain, after algebraic simplification,
\beq
\begin{gathered}
\PttD\nabg_{\dr}\sigma=d^{V}_{\gD}v+(d^{V}_{\gD}v)^{T}+\phi\,\mathrm{A}_{g}, \qquad
\PnnG\nabg_{\dr}\sigma=\,\delta_{\gD}v-(\trace_{\gD}\mathrm{A}_{g})\,\phi, \\
\PtnG\nabg_{\dr}\sigma=-(\trace_{\gD}\Ah_{g})\,v-\Sh_{g}v.
\end{gathered}
\label{eq:boundary_expressions}
\eeq
Recalling that $d^{V}_{\gD}v+(d^{V}_{\gD}v)^{T}=2\Defd{v^{\sharp_{\gD}}}$ by \eqref{eq:Killing_exterior_relations}, we obtain
\[
\trace_{\gD}\bigl(d^{V}_{\gD}v+(d^{V}_{\gD}v)^{T}\bigr) = -2\delta_{\gD}v.
\]
Comparing the first two identities in \eqref{eq:boundary_expressions} with \eqref{eq:derivative_trace_boch} therefore gives
\beq 
(\nabg_{\dr}\trace_{g}\sigma)|_{\dM}=-\delta_{\gD}v.
\label{eq:normal_refined} 
\eeq

Substituting \eqref{eq:boundary_expressions} and \eqref{eq:normal_refined} into the boundary integral above, and using the boundary decompositions in \eqref{eq:orthogonal_boundary_bochner}, we obtain
\[
\begin{split}
&\int_M |\nabg\sigma|_{g}^2+(\mathcal (\calR_{g}+2\,\tGamma)\sigma,\sigma)_{g}\,d\Volume_{g}
=
\\&\qquad\qquad\int_{\dM}
(\trace_{\gD}\mathrm{A}_{g})\,\phi^2
-\phi\,\delta_{\gD}v
+2\,(\trace_{\gD}\mathrm{A}_{g})\,|v|_{\gD}^2
+2\,(\Sh_{g}v,v)_{\gD}
\,d\Volume_{\gD}
\\&\qquad\qquad+\int_{\dM}
\phi\,\delta_{\gD}v-(d\phi,v)_{\gD}
\,d\Volume_{\gD}.
\end{split}
\]
The last integral vanishes by integrating $d$ by parts against $\delta_{\gD}$. Thus,
\[
\begin{split}
&\int_M |\nabg\sigma|_{g}^2+(\mathcal (\calR_{g}+2\,\tGamma)\sigma,\sigma)_{g}\,d\Volume_{g}
=
\\&\qquad\qquad\int_{\dM}
(\trace_{\gD}\mathrm{A}_{g})\,\phi^2
+2\,(\trace_{\gD}\mathrm{A}_{g})\,|v|_{\gD}^2
+2\,(\Sh_{g}v,v)_{\gD}
\,d\Volume_{\gD}-\int_{\dM}
\phi\,\delta_{\gD}v
\,d\Volume_{\gD}.
\end{split}
\]
It remains to eliminate the coupled term $\phi\,\delta_{\gD}v$. For this, we take the trace of the interior equation in \eqref{eq:interior_Bochner_1} and use $\trace_{g}\calR_{g}\sigma=0$, as in \eqref{eq:Lic_prop}. With $\tau:=\trace_{g}\sigma$, this yields
\[
{\nabg}^*d\tau+\trace_{g}(\tGamma\sigma)=0.
\]
Applying Green's formula \eqref{eq:Greens_outward_normal} to this identity gives
\[
\int_{M}|d\tau|^{2}_{g}+\tau\,\trace_{g}(\tGamma\sigma)\,d\Volume_{g}
=
-\int_{\dM}\tau(\nabg_{\dr}\tau)\,d\Volume_{\gD}.
\]
Using $\tau=\trace_{g}\sigma$ and comparing with \eqref{eq:normal_refined} and \eqref{eq:trace_trick}, we find
\[
\int_{M}|d\tau|^{2}_{g}+\tau\,\trace_{g}(\tGamma\sigma)\,d\Volume_{g}
=
\int_{\dM}\phi\,\delta_{\gD}v\,d\Volume_{\gD}.
\]
Adding this identity to the preceding integral cancels the coupled boundary term. Finally, since
\[
\tau\,\trace_{g}(\tGamma\sigma)
=(\trace_{g}(\tGamma\sigma)g, \sigma)_{g},
\]
we have
\[
(2\,\tGamma\sigma,\sigma)_{g}+\tau\,\trace_{g}(\tGamma\sigma)
=(\overbar{\tGamma}\sigma,\sigma)_{g}, 
\]
yielding the required formula in \propref{prop:Bochner1}.
\end{PROOF}
%%%%%%%%%%%%%%%%%
\subsection{Bochner technique for the second cohomology}

As discussed in \secref{sec:main_results_intro}, although the explicit expression of $\scrE^{2}(M,g,\Gamma)$ contains non-local terms, it still yields identities making the application of the Bochner technique possible. We begin by recording these identities in full generality.
%%%%%%%%
\begin{proposition}
\label{prop:E2basic}
Under the assumptions of part (b) in \propref{prop:Eisntein_disrupted}, let $\eta \in \scrE^{2}(M, g, \tGamma)$. Then the following identities hold:
\beq 
\begin{gathered} 
\delBianchi\eta=0, \qquad \PnnG\eta=0, \qquad \PtnG\eta=0, 
%\\\PttD\mathrm{B}_{g}\D_{\ttGamma}\Ric_{g}\eta=0,
\\\PttD\delBianchi\mathrm{B}_{g}\D\Ric_{g}\eta=-\dertZero\PttD\delta_{g+t\eta}\mathrm{B}_{g+t\eta}(\Ric_{g}|_{\dM}). 
\end{gathered}
\label{eq:boundaryE2}   
\eeq 
Here the notation on the right-hand side of the final identity emphasizes that this term depends only on $\Ric_{g}|_{\dM}$. 

Moreover, the following integral identity is satisfied:
\beq 
\int_{M} ( {\nabg}^*\nabg\eta-\nabg d\trace_{g}\eta-\Delta_{g}\trace_{g}\eta\,g + (\calR_{g}+2\tGamma^*\mathrm{B}_{g})\eta, \eta )_{g}\,d\Volume_{g}=0.
\label{eq:integral}
\eeq
\end{proposition}
%%%%%%%%%%%%%% 

\begin{proof}
The first three identities in \eqref{eq:boundaryE2} are just the conditions $\delta_{g}\eta=0$ and $\PnD_{g}\eta=0$, with the latter rewritten as in \eqref{eq:Pn_res_iso}.  

For the boundary identity in the second line, we use the linearization of the differential Bianchi identity and the chain rule to write
\[
\begin{split} 
\delBianchi\mathrm{B}_{g}\D\Ric_{g}\eta
&=\dertZero(\delta_{g+t\eta}\mathrm{B}_{g+t\eta}\Ric_{g+t\eta}-\delta_{g+t\eta}\mathrm{B}_{g+t\eta}\Ric_{g})
\\
&=-\dertZero\delta_{g+t\eta}\mathrm{B}_{g+t\eta}\Ric_{g}
\\
&=\dertZero\trace_{g+t\eta}\nabla^{g+t\eta}\mathrm{B}_{g+t\eta}\Ric_{g}
\\
&=-\sum_{i}(\nabla^{g}_{\eta^{\sharp_{g}}(e_i)}\mathrm{B}_{g}\Ric_{g})(e_i;\cdot)+\dertZero(\nabla^{g+t\eta}_{e_i}\mathrm{B}_{g+t\eta}\Ric_{g}(e_i;\cdot)). 
\end{split} 
\]
Here we used the chain rule, the identity $\delta_{g+t\eta}\mathrm{B}_{g+t\eta}\Ric_{g+t\eta}=0$, and the variation formula for the trace. We also write $\eta^{\sharp_{g}}:TM\rightarrow TM$ for the symmetric endomorphism obtained by raising one index of $\eta$. 

Choose now an orthonormal frame with $e_1=\dr$. The condition $\PnD_{g}\eta=0$ is equivalent to $\eta^{\sharp_{g}}(\dr)=0$ on the boundary. Hence the first term involves only tangential derivatives, and therefore depends only on $\Ric_{g}|_{\dM}$. For the second term, the variation formula for the Levi-Civita connection (cf.~\cite[p.~560]{Tay11b}), the fact that $\eta$ has no normal boundary components, and the tensoriality of $\mathrm{B}_{g+t\eta}$ imply that it too depends only on the boundary value $\Ric_{g}|_{\dM}$. This proves the stated boundary identity.

%to deduce after some manipulation that it becomes tangential as well:
%\[
%\sum_{i= 1}\dertZero(\nabla^{g+t\eta}_{e_i}\mathrm{B}_{g}\Ric_{g}(e_i;Y))=\sum_{i\ne 1}\dertZero(\nabla^{\gamma+t\mu}_{e_i}\mathrm{B}_{g}\Ric_{g}(e_i;Y))
%\]
%For the last term,
%\[
%\begin{split}
%\sum_{i\ne 1} \dertZero(\nabla^{g}_{e_i}\mathrm{B}_{g+t\eta}\Ric_{g}(e_i;Y))&=\sum_{i\ne1} \dertZero\nabla^{g}_{e_i}(\Ric_{g}-\tfrac{1}{2}(\trace_{g+t\eta}\Ric_{g})g-\tfrac{1}{2}(\trace_{g}\Ric_{g})\eta)(e_i;Y)
%\\& =\sum_{i\ne1} \dertZero\nabla^{\gamma}_{e_i}(\PttD\Ric_{g}-\tfrac{1}{2}(\trace_{g+t\mu}\PttD\Ric_{g})g-\tfrac{1}{2}(\trace_{\gamma}\PttD\Ric_{g})\mu)(e_i;Y)
%\end{split} 
%\]
%where we again used the fact that $\eta$ has no normal components, both 
%But since $\mathrm{B}_{g}\Ric_{g}$ 
% and using the facts that the variation of the connection is tensorial in its differentiating argument  and that $\Ric_{g}|_{\dM}=0$, we are left with:
%\[
%(\delBianchi\mathrm{B}_{g}\D\Ric_{g}\eta)|_{\dM}=-\sum_{i}(\nabla^{g}_{\eta^{\sharp_{g}}(e_i)}\mathrm{B}_{g}\Ric_{g})(e_i;\cdot)|_{\dM}.
%\]
%Now, t and , we see that this sum involves only tangential covariant derivatives. 

It remains to prove the integral identity \eqref{eq:integral}. Taking the $L^{2}$-inner product of $\tbP_{g}\mathrm{B}_{g}\D_{\ttGamma}\Ric_{g}\eta=0$ with $\eta$, and using that $\tbP_{g}$ is the $L^{2}$-orthogonal projection onto $\ker\delta_{g}$ while $\eta\in\ker\delta_{g}$, we may remove the projection inside the $L^{2}$-coupling, obtaining: 
\[
\int_{M}(\mathrm{B}_{g}\D_{\ttGamma}\Ric_{g}\eta, \eta)_{g}\,d\Volume_{g}
=
\int_{M}(\tbP_{g}\mathrm{B}_{g}\D_{\ttGamma}\Ric_{g}\eta, \eta)_{g}\,d\Volume_{g}
=0.
\]
Expanding $\D_{\ttGamma}\Ric_{g}\eta$ by the Ricci variation formula \eqref{eq:Ricci_variation}, as in the first step of the proof of \propref{prop:Bochner1}, and using that $\mathrm{B}_{g}{{\nabg}^*\nabg\eta}={\nabg}^*\nabg\mathrm{B}_{g}\eta$ since $\mathrm{B}_{g}$ is parallel, we obtain, using $\mathrm{B}_{g}\calR_{g}=0$ (cf.~\eqref{eq:Lic_prop}) and the definition $\ttGamma=\mathrm{B}_{g}^{-1}\Gamma^*\mathrm{B}_{g}$,
\[
\mathrm{B}_{g}\D_{\ttGamma}\Ric_{g}\eta=\tfrac{1}{2}{\nabg}^*\nabg\mathrm{B}_{g}\eta-\tfrac{1}{2}\mathrm{B}_{g}\nabg d\trace_{g}\eta+\tfrac{1}{2}\calR_{g}\eta+\tGamma^*\mathrm{B}_{g}\eta. 
\]
The expansion $\mathrm{B}_{g}\eta=\eta-\tfrac{1}{2}\trace_{g}\eta\,g$ then gives the required integral identity, where we use the identities: 
\[
\Delta_{g}\trace_g\eta={\nabg}^*\nabg\trace_{g}\eta=-\trace_{g}\nabg d\trace_{g}\eta={\nabg}^*\nabg(\trace_{g}\eta\,g).
\]
\end{proof}
%%%%%%%%

The next proposition is perhaps the heart of the Bochner-technique argument. To carry it out, we impose an additional condition on $\tGamma$, equivalent by the chain rule to the condition presented in \eqref{eq:tgamma_condition}. Under this assumption, by comparing Weitzenb\"ock identities with the vector potential provided by the nonlocal constraint, the preceding identities yield a further boundary condition that will be key to the rest of the analysis.
%%%%%%%%%%%
\begin{proposition}
\label{prop:Sptn}
In the previous setting, let $\eta\in\scrE^{2}(M,g,\tGamma)$ and suppose that, in comparison with \eqref{eq:boundaryE2}, $\tGamma$ satisfies
\beq 
\PttD\delta_{g}\ttGamma\eta=\dertZero\PttD\delta_{g+t\eta}\mathrm{B}_{g+t\eta}(\Ric_{g}|_{\dM}).
\label{eq:divGammacond} 
\eeq 
Then
\[
\Sh_{g}\PtnG\D_{\ttGamma}\Ric_{g}\eta=0.
\]
\end{proposition}
%%%%%%%%%

\begin{proof}
Set the quantity: 
\[
\upsilon:=\mathrm{B}_{g}\D_{\ttGamma}\Ric_{g}\eta\in\SM.
\]
Since $\PtnG\mathrm{B}_{g}=\PtnG$, as $\PtnG g=0$ and $\mathrm{B}_{g}=\id-\tfrac{1}{2}\trace(\cdot)g$, the claim is reduced to showing that $\Sh_{g}\PtnG\upsilon$ vanishes. Since $\eta\in\scrE^{2}(M,g,\tGamma)$, we have in particular that $\tbP_{g}\upsilon=0$, which is to say: 
\beq 
(\id-\tbP_{g})\upsilon=\upsilon.
\label{eq:PttDRic}
\eeq
The operator $\id-\tbP_{g} \colon \SM \rightarrow \SM$ is the $L^{2}$-orthogonal projection onto $\mathrm{im}(\Def|_{\ker(\cdot)|_{\partial M}})$. Hence, using \eqref{eq:Killing_exterior_relations}, we may write
\[
\upsilon=\Def{X}=\dgV\omega+(\dgV\omega)^{T},
\qquad
\omega:=\tfrac{1}{2}X^{\flat_{g}}\in\Wkm{1}{0}.
\]
The identity $\PttD\Def{X}=0$ for such vector fields, proved in \eqref{eq:second_fudnemental_Lie}, gives
\[
\PttD\upsilon=0.
\]
The last two boundary identities in \eqref{eq:boundaryE2}, combined with \eqref{eq:divGammacond}, then yield in addition
\[
\PttD\delta_{g}\upsilon=0.
\]
Since $X|_{\dM}=0$ is equivalent to $\omega|_{\dM}=0$, the condition $\PttD\upsilon=0$, together with the commutation formula in \cite[p.~706]{KL21a}, then gives
\beq
\label{eq:Ptn-upsilon-general}
\begin{gathered} 
\PtnG\upsilon=\PttG\nabg_{\dr}\omega, \qquad -\PttD\deltag\omega=\PnnG\upsilon=(\nabg_{\dr}\omega)(\dr)|_{\dM}.
\end{gathered} 
\eeq
From this point on, it is useful to keep track of the function $\phi\in C^{\infty}(\dM)$ defined by
\beq
\label{eq:phi-def-general}
\phi:=-\PttD\deltag\omega=\PnnG\upsilon=(\nabg_{\dr}\omega)(\dr)|_{\dM}.
\eeq
We proceed by expanding the relation $\PttD\delta_{g}\upsilon=0$. Since $\PttD\upsilon=0$, the boundary commutation formulae in \cite[pp~705--706]{KL21a} gives
\beq
\label{eq:delta-upsilon-expanded-general}
\PtnG\nabg_{\dr}\upsilon
=
-(\trace_{\gD}\Ah_{g})\,\PtnG\upsilon-\Sh_{g}\PtnG\upsilon.
\eeq

We now expand the left-hand side. Let $Y\in\frakX(M)|_{\dM}$ be a tangential vector field extended so that $\nabg_{\dr}Y=0$. Then, on $\dM$,
\[
\begin{split}
(\PtnG\nabg_{\dr}\upsilon)(Y)
&=(\nabg_{\dr}\upsilon)(\dr;Y)
=\dr(\upsilon(\dr;Y))=\dr(\nabg_{\dr}\omega(Y)+\nabg_{Y}\omega(\dr))
\\
&=(\nabg_{\dr}\nabg_{\dr}\omega)(Y)+\nabg_{\dr}\nabg_{Y}\omega(\dr)
\\
&=(\nabg_{\dr}\nabg_{\dr}\omega)(Y)+\nabg_{Y}\nabg_{\dr}\omega(\dr)
\\
&=(\nabg_{\dr}\nabg_{\dr}\omega)(Y)+Y(\nabg_{\dr}\omega(\dr))-(\PttG\nabg_{\dr}\omega,\nabg_{Y}\dr)_{\gD}
\\
&=(\nabg_{\dr}\nabg_{\dr}\omega)(Y)+d\phi(Y)-(\Sh_{g}\PtnG\upsilon)(Y),
\end{split}
\]
where we used $\omega|_{\dM}=0$ to commute derivatives on the boundary, then \eqref{eq:phi-def-general}, and finally \eqref{eq:Ptn-upsilon-general}. Comparing this identity with \eqref{eq:delta-upsilon-expanded-general}, we obtain
\beq
\label{eq:Ptt-nablan2-omega-general}
d\phi=-\PttD\nabg_{\dr}\nabg_{\dr}\omega-(\trace_{\gD}\Ah_{g})\,\PtnG\upsilon.
\eeq

We next compare this with the Weitzenb\"ock formula for the $1$-form $\omega$; see \cite[Ch.~9.4.1]{Pet16}. Since $\omega|_{\dM}=0$, we have
\beq
\label{eq:weitzenbock-omega-general}
({\nabg}^*\nabg \omega)|_{\dM}
=
(\deltag d\omega+d\deltag\omega)|_{\dM}.
\eeq

For the left-hand side, choose an orthonormal frame with $e_1=\dr$ and $\nabla^{\gD}e_i=0$ at the point under consideration. Then
\[
{\nabg}^*\nabg\omega
=
-\nabg_{\dr}\nabg_{\dr}\omega
-\sum_{i\ne 1}(\nabg_{e_{i}}\nabg_{e_{i}}\omega-\nabg_{\nabg_{e_{i}}e_{i}}\omega).
\]
Since $\omega|_{\dM}=0$, we have $\nabg_{e_{i}}\omega=0$ on $\dM$ for $i\ne 1$, and hence the terms $\nabg_{e_{i}}\nabg_{e_{i}}\omega$ vanish. Moreover, since $\nabg_{e_i}e_i=-\Ah_{g}(e_i,e_i)\dr$ on the boundary,
\[
(\nabg_{\nabg_{e_{i}}e_{i}}\omega)|_{\dM}
=
-\Ah_{g}(e_{i},e_{i})\,\PttG\nabg_{\dr}\omega
=
-\Ah_{g}(e_{i},e_{i})\,\PtnG\upsilon.
\]
Thus,
\[
\PttD{\nabg}^*\nabg\omega
=
-\PttD\nabg_{\dr}\nabg_{\dr}\omega-(\trace_{\gD}\Ah_{g})\,\PtnG\upsilon.
\]
Using \eqref{eq:Ptt-nablan2-omega-general}, we conclude that
\beq
\label{eq:Ptt-rough-laplacian-general}
\PttD{\nabg}^*\nabg\omega= d\phi.
\eeq

Now apply $\PttD$ to \eqref{eq:weitzenbock-omega-general}. Since $\phi=-\PttD\deltag\omega$ and $d$ commutes with pullback, this gives
\beq
\label{eq:weitzenbock-boundary-general}
\PttD{\nabg}^*\nabg\omega
=
\PttD\deltag d\omega- d\phi.
\eeq
Comparing \eqref{eq:Ptt-rough-laplacian-general} with \eqref{eq:weitzenbock-boundary-general}, we find
\beq
\label{eq:Ptt-delta-domega-two-dphi}
\PttD\deltag d\omega=2d\phi.
\eeq

It remains to compute $\PttD\deltag d\omega$ directly. Since $\omega|_{\dM}=0$, the commutation formulae in \cite[p.~706]{KL21a} give
\[
\PttD d\omega=0,
\qquad
\PntG d\omega=\PttG\nabg_{\dr}\omega=\PtnG\upsilon.
\]
Hence
\beq
\label{eq:Ptt-delta-domega-general}
\PttD\deltag d\omega
=
-(\trace_{\gD}\Ah_{g})\,\PtnG\upsilon-\PntG\nabg_{\dr}d\omega.
\eeq

To compute the last term, let again $Y$ be tangential with $\nabg_{\dr}Y=0$. Then
\[
\begin{split}
(\PntG\nabg_{\dr}d\omega)(Y)
&=
(\nabg_{\dr}d\omega)(\dr,Y)
=
\dr(d\omega(\dr,Y))
\\
&=
\dr(\nabg_{\dr}\omega(Y)-\nabg_{Y}\omega(\dr))
\\
&=
(\nabg_{\dr}\nabg_{\dr}\omega)(Y)-\nabg_{Y}\nabg_{\dr}\omega(\dr)
\\
&=
(\nabg_{\dr}\nabg_{\dr}\omega)(Y)-Y(\phi)+(\PttG\nabg_{\dr}\omega,\nabg_{Y}\dr)_{\gD}
\\
&=
(\nabg_{\dr}\nabg_{\dr}\omega)(Y)-d\phi(Y)+(\Sh_{g}\PtnG\upsilon)(Y),
\end{split}
\]
where we again used $\omega|_{\dM}=0$ to commute derivatives on the boundary, then \eqref{eq:phi-def-general}, and finally \eqref{eq:Ptn-upsilon-general}. Therefore, by \eqref{eq:Ptt-nablan2-omega-general},
\[
\PntG\nabg_{\dr}d\omega
=
-(\trace_{\gD}\Ah_{g})\,\PtnG\upsilon-2d\phi+\Sh_{g}\PtnG\upsilon.
\]
Substituting this into \eqref{eq:Ptt-delta-domega-general}, we obtain
\beq
\label{eq:Ptt-delta-domega-final-general}
\PttD\deltag d\omega
=
2d\phi-\Sh_{g}\PtnG\upsilon.
\eeq

Finally, comparing \eqref{eq:Ptt-delta-domega-two-dphi} with \eqref{eq:Ptt-delta-domega-final-general}, we conclude that
\[
\Sh_{g}\PtnG\upsilon=0,
\]
which completes the proof.
\end{proof}
%%%%%%%%%%%

With these preparations, by iterating Green's formula \eqref{eq:Greens_outward_normal} upon the integral identity \eqref{eq:integral}, and using the conditions $\PnD_{g}\eta=0$ and $\deltag\eta={\nabg}^*\eta=0$, we find that all boundary terms cancel except the purely tangential ones:
\[
\begin{split} 
\int_{M} |\nabg\eta|_{g}^2 - &|d\trace_{g}\eta|_{g}^2 + ((\calR_{g}+2\tGamma^*\mathrm{B}_{g})\eta, \eta )_{g} \,d\Volume_{g}
\\&= -\int_{\dM}(\xi,\mu)_{\gD}\,d\Volume_{\gD}+\int_{\dM}(\PnnG\nabg_{\dr}\eta+\trace_{\gamma}\xi)(\trace_{\gamma}\mu)\,d\Volume_{\gD},
\end{split} 
\]
where we have defined
\[
\mu := \PttD \eta\in\SdM,
\qquad
\xi := \PttD \nabg_{\dr} \eta\in\SdM.
\]
This identity shows that, even when $\dM=\emptyset$, positivity of the interior expression cannot be secured merely by imposing positivity assumptions on $\calR_{g}+2\tGamma^*\mathrm{B}_{g}$, unless $d\trace_{g}\eta=0$. 

For this reason, as discussed before \thmref{thm:vanishing_intro2}, we restrict to elements of $\scrE^{2}(M,g,\tGamma)$ satisfying $d\trace_{g}\eta=0$, so the integral identity \eqref{eq:integral} reduces to
\beq 
\begin{split} 
&\int_{M} |\nabg\eta|_{g}^2+ ((\calR_{g}+2\tGamma^*)\eta, \eta )_{g} \,d\Volume_{g}=-\int_{\dM}(\xi,\mu)_{\gD}\,d\Volume_{\gD}.
\end{split} 
\label{eq:integral2} 
\eeq  

In order to evaluate the resulting boundary term, we first derive a set of identities in a slightly more general form. This will make clear where the full specialization to the setting of \thmref{thm:vanishing_intro2} is needed.

%%%%%%%%%%%%%%%
\begin{proposition}
\label{prop:relations_E2}
Let
$\eta \in \scrE^{2}(M,g,\tGamma)\cap \ker d\trace_{g}$, so $\trace_{g}\eta=c\in\bbR$.
Then
\beq
\begin{gathered}
\trace_{\gD}\mu=c, \qquad \trace_{\gD}\xi=-\PnnG\nabg_{\dr}\eta=-(\mathrm{A}_{g}, \mu)_{\gD}, 
\\ H_{\gD}^*\mu+(\Ah_{g}, \xi)_{\gD}+(\Ah^{2}_{g}, \mu)_{\gD}=-\trace_{\gD}\PttD(2\tGamma^*\eta+\tfrac{1}{2}\calR_{g}\eta).
\end{gathered}
\label{eq:relations_E2}
\eeq
Moreover, if the assumptions of \propref{prop:Sptn} are satisfied and the shape operator $\Sh_{g}$ is non-singular, then
\beq 
\tfrac{1}{2}d_{\gD}\xi
=  -\dertZero d_{\gD+t\mu}\Ah_{g}-d_{\gD}\Sh_{g}\mu-\tfrac{1}{n-2}\gD\wedge\PtnG\tGamma^*\eta+\PtnG\D\mathrm{Wey}_{g}\eta.
\label{eq:relation_E22}
\eeq 
\end{proposition}
%%%%%%%%%%%%%%%%%%%%%%%%%%%%
The proof relies on Weitzenb\"ock identities and linearized constraints; namely, on comparing the Ricci decomposition of the curvature tensor with the Codazzi equations on the boundary.
%%%%%
\begin{proof}
The identity $\trace_{\gD}\mu=c$ in the first row of \eqref{eq:relations_E2} follows directly from $\trace_{g}\eta=c$, $\PnnG\eta=0$, and the same computation as in \eqref{eq:trace_trick}:
\[
c=(\trace_{g}\eta)|_{\dM}=\PnnG\eta+\trace_{\gD}\PttD\eta=\trace_{\gD}\mu.
\]
We next prove that
\[
\trace_{\gD}\xi=-\PnnG\nabg_{\dr}\eta=-(\mathrm{A}_{g}, \mu)_{\gD}.
\]
To observe this, on the one hand, since the trace and the covariant derivative commute, the same computation as in \eqref{eq:trace_trick} gives
\[
\begin{split}
\trace_{\gD}\xi=\trace_{\gD}\PttD\nabg_{\dr}\eta
&=(\trace_{g}\nabg_{\dr}\eta)|_{\dM}-\PnnG\nabg_{\dr}\eta\\
&=(\nabg_{\dr}\trace_{g}\eta)|_{\dM}-\PnnG\nabg_{\dr}\eta
=-\PnnG\nabg_{\dr}\eta.
\end{split}
\]
On the other hand, applying $\PtnG$ to $\delBianchi\eta=0$, and using $\PnnG\eta=0$ and $\PtnG\eta=0$ together with the commutation formulae in \cite[pp.~705--706]{KL21a}, yields
\[
\PnnG\nabg_{\dr}\eta=(\mathrm{A}_{g}, \mu)_{\gD}.
\]
By comparing, this proves the identities on the first row of \eqref{eq:relations_E2}.

For the identity in the second row, we apply $\PttG$ to $\delBianchi\eta=0$. Again using the commutation formulae in \cite[pp.~705--706]{KL21a}, we obtain
\[
\delta_{\gD}\mu=\PtnD\nabg_{\dr}\eta=\PnnG\dgV\eta.
\]
Applying $\delta_{\gD}^{V}$ to this identity, and using the commutation formulae once more together with $\PnnG\eta=0$ and $\PntG\eta=0$, gives after simplification
\[
\begin{split} 
H_{\gD}^*\mu
&= -\PnnG\deltagV\dgV\eta-(\Ah_g,\PtnG\dgV\eta)_{\gD}
\\
&= -\PnnG\deltagV\dgV\eta-(\Ah^{2}_g,\mu)_{\gD}-(\Ah_g,\xi)_{\gD}.
\end{split} 
\]
It remains to evaluate $\PnnG\deltagV\dgV\eta$. Since $\trace_{g}\eta=c$ implies $\mathrm{B}_{g}\eta=\eta-\tfrac{1}{2}cg$, the Ricci variation formula \eqref{eq:Ricci_variation}, together with $\delta_{g}\eta=0$, gives
\[
(\mathrm{B}_{g}\D_{\ttGamma}\Ric_{g}\eta)|_{\dM}
=(\tfrac{1}{2}{\nabg}^*\nabg\eta+\tfrac{1}{2}\calR_{g}\eta+\tGamma^*\eta)|_{\dM}.
\]
Thus, the boundary condition in the second row of \eqref{eq:boundaryE2} reads
\[
\PttD{\nabg}^*\nabg\eta+\PttD\calR_{g}\eta+2\PttD\tGamma^*\eta=0.
\]
Using the Weitzenb\"ock formula \cite[Ch.~9.4.1]{Pet16} to relate ${\nabg}^*\nabg$ to $\deltag\dg+\dg\deltag$ and its transposed version, and using again $\deltag\eta=0$, we obtain
\[
{\nabg}^*\nabg\eta=\deltagV\dgV\eta-\tfrac{1}{2}\mathcal{R}_{g}\eta.
\]
Substitution into the previous expression gives, after rearranging,
\[
\PttD\deltagV\dgV\eta=-\tfrac{1}{2}\PttD\calR_{g}\eta-2\PttD\tGamma^*\eta.
\]
Applying $\trace_{\gD}$, and using $\trace_{g}{\nabg}^*\nabg\eta={\nabg}^*\nabg\trace_{g}\eta=0$ and $\trace_{g}\calR_{g}\eta=0$, together with the computation in \eqref{eq:trace_trick}, we find
\[
\PnnG\deltagV\dgV\eta
=\trace_{\gD}\PttD(2\tGamma^*\eta+\tfrac{1}{2}\calR_{g}\eta).
\]
Combining this with the expression for $H_{\gD}^*\mu$ proves the second row of \eqref{eq:relations_E2}.

We now prove \eqref{eq:relation_E22}. Under the assumptions of \propref{prop:Sptn}, and assuming that the shape operator is non-singular, we have
\[
0=\PtnG\D_{\ttGamma}\Ric_{g}\eta=\PtnG\D\Ric_{g}\eta+\PtnG\tGamma^*.
\]
Here we used that $\ttGamma\eta=\mathrm{B}_{g}^{-1}(\tGamma^*\eta-\tfrac{1}{2}cg)$ since $\trace_{g}\eta=c$, and hence $\PtnG\mathrm{B}_{g}^{-1}\tGamma^*=\PtnG\tGamma^*$ because $\PtnG g=0$.

We next write the Ricci decomposition for $\Rm_{g}$, which is the defining relation for $\mathrm{Wey}_{g}$ (cf.~\cite[p.~109]{Pet16}; note that the wedge product differs from the Kulkarni--Nomizu product by a sign):
\[
\Rm_{g}=-\tfrac{1}{n-2}\,g\wedge\Ric_{g}+\tfrac{1}{2(n-2)(n-1)}\,\mathrm{Sc}_{g}\,g\wedge g+\mathrm{Wey}_{g}.
\]
Applying $\PtnG$ to both sides and using $\PtnG g=0$, we obtain
\[
\PtnG\Rm_{g}=\tfrac{1}{n-2}\gD\wedge\PtnG\Ric_{g}+\PtnG\mathrm{Wey}_{g}.
\]
This identity remains true with $g$ replaced by $g+t\eta$. Linearizing it, and using that $\dertZero \PntD_{g+t\eta}=0$ because $\eta$ has no normal components, gives
\[
\PtnG\D\Rm_{g}\eta=\tfrac{1}{n-2}\gD\wedge\PtnG\D\Ric_{g}\eta+\tfrac{1}{n-2}\mu\wedge\PtnG\Ric_{g}+\PtnG\D\mathrm{Wey}_{g}\eta.
\]
Substituting $\PtnG\D\Ric_{g}\eta=-\PtnG\tGamma^*$ and linearizing the Codazzi equations, which can be written as (\cite[Ch.~4]{KL21a}):
\[
\PtnG\D\Rm_{g}=d_{\gD}\mathrm{A}_{g},
\]
we obtain
\[
\dertZero d_{\gD+t\mu}\mathrm{A}_{g}+d_{\gD}\D\Ah_{g}\eta
=-\tfrac{1}{n-2}\gD\wedge\PtnG\tGamma^*+\PtnG\D\mathrm{Wey}_{g}\eta.
\]
Finally, expanding $\D\Ah_{g}\eta$ by \eqref{eq:DA}, and using the explicit expression in \cite[p.~755]{KL21a} for $\frakT_{g}\eta$ together with $\PnnG\eta=0$ and $\PntG\eta=0$, gives
\[
d_{\gD}\D\Ah_{g}\eta=\tfrac{1}{2}d_{\gD}\xi+d_{\gD}\Sh_{g}\mu.
\]
By substituting this into the preceding identity and rearranging we are done. 
\end{proof}
%%%%%
The identities in \eqref{eq:relations_E2} do not, by themselves, appear sufficient to control the boundary term in \eqref{eq:integral2}, even when $\Ah_{g}$ is definite. We therefore specialize to the setting of \thmref{thm:vanishing_intro2}: namely, $\mathrm{A}_{g} = -\sqrt{\kappa}\,\gD$ for some constant $\kappa > 0$, the boundary $(\dM,\gD)$ is a round sphere, and $\PtnG\D\mathrm{Wey}_{g}\eta=0$. This completes the setup needed for the proof of \thmref{thm:vanishing_intro2}.

\begin{PROOF}{\thmref{thm:vanishing_intro2}}
Since $\tGamma^*=\tGamma$, we have $\PtnG\tGamma^*\eta=\PtnG\mathrm{B}_{g}\tGamma\eta0$ by the assumption that $\tGamma$ satisfies \eqref{eq:vanishing_RicBody2}, together with the fact that $\dertZero\PtnD_{g+t\eta}=0$, since $\eta$ has no normal components. We also have $\trace_{\gamma}\PttD(2\tGamma+\tfrac{1}{2}\calR_{g})=0$ by the second condition in \eqref{eq:vanishingCon2}. 

Now, under the assumption that $\Ah_{g}=-\sqrt\kappa \gamma$, we readily find, by the well-known variation formula for the connection \cite[p.~559]{Tay11b}, that
\[
-\dertZero d_{\gD+t\mu}\mathrm{A}_{g}-d_{\gamma}\Sh_{g}\mu=-\sqrt{\kappa}d_{\gamma}\mu+\sqrt{\kappa}d_{\gamma}\mu=0.
\]
We also have $\trace_{\gD}\xi=-(\Ah_{g},\mu)_{\gD}=\sqrt{\kappa}\trace_{\gD}\mu=\sqrt{\kappa}c$, along with
\[
(\Ah_{g},\xi)_\gD+(\Ah_{g}^{2},\mu)_{\gD}=-\sqrt{\kappa}\trace_{\gamma}\xi+ \kappa\trace_{\gamma}\mu=0.
\]
Thus, under these assumptions, the relations \eqref{eq:relations_E2} and \eqref{eq:relation_E22}, together with the already imposed condition \eqref{eq:vanishing_RicBody}, reduce to
\beq 
\begin{gathered}
\trace_{\gD}\xi=\sqrt{\kappa}c, \qquad d_{\gD}\xi = 0, \qquad H_{\gD}^*\mu=0.
\end{gathered} 
\label{eq:refined_sphere} 
\eeq
To exploit these identities, we apply the classical fact (e.g., \cite{Cal61} or \cite[Sec.~5]{BE69}) that the condition $d_{\gD}\xi=0$ on a sphere of constant curvature is equivalent to the existence of a scalar potential $\chi \in C^{\infty}(\dM)$ such that
\[
\xi=H_{\gD}\chi+\kappa\,\chi\gD.
\]
Taking $\trace_{\gD}$ gives
\[
\sqrt{\kappa}c=-\Delta_{\gamma}\chi+(n-1)\kappa\,\chi.
\]
Integrating over $\dM$, it follows that
\[
\tfrac{c}{(n-1)\sqrt{\kappa}}\Volume_{\gD}(\dM)=\int_{\dM}\chi\,d\Volume_{\gamma}.
\]
With this in hand, substituting the representation of $\xi$ into \eqref{eq:integral2} yields
\[
\begin{split}
\int_{M} |\nabg\eta|_{g}^2+ ((\calR_{g}+2\tGamma)\eta, \eta )_{g} \,d\Volume_{g}
&=-\int_{\dM}(H_{\gD}\chi+\kappa\,\chi\gD,\mu)_{\gD}  \,d\Volume_{\gD}
\\
&=-\int_{\dM}\chi(H_{\gD}^*\mu+\kappa\trace_{\gD}\mu)\,d\Volume_{\gD}
\\
&=-\int_{\dM}\chi\,\kappa c\,d\Volume_{\gD}
=-\tfrac{c^2\sqrt{\kappa}}{n-1}\Volume_{\gD}(\dM),
\end{split}
\]
where we used Green's formula \eqref{eq:Hg_Green} on the closed boundary manifold, the fact that the adjoint of multiplication by $\gD$ is $\trace_{\gD}$, and finally \eqref{eq:refined_sphere} together with the derived expression for the integral of $\chi$.

Thus, under the assumption $\calR_{g}+2\tGamma\geq 0$, one side is non-positive while the other is non-negative, and hence we must have $\nabg\eta = 0$ everywhere and $c=0$. If the boundary is empty, the additional assumption that $\calR_{g}+2\tGamma>0$ at some point implies $\eta=0$. If the boundary is nonempty, then since $\eta$ has no normal components, it follows that $\nabla^{\gD}\mu = 0$. Since the sphere is irreducible, by holonomy, the only parallel symmetric tensor fields are of the form $\mu=\lambda\,\gD$ for some constant $\lambda \in \mathbb{R}$ (cf.~\cite[p.~386]{Pet16}). Since $\trace_{\gD}\mu=c=0$, we obtain $\lambda=0$, and therefore $\mu=\PttD\eta=0$. Together with $\PnD_{g}\eta=0$, this implies $\eta|_{\dM}=0$. Since $\eta$ is parallel, we conclude that $\eta=0$.

\end{PROOF}
\section{Duality for curvature}
\label{sec:duality_for_curvature} 
%%%%%%%%%%%%
\subsection{Duality between the Einstein and Riemann curvatures}
\label{sec:duality}
In this section, we prove \thmref{thm:duality_Riem} and derive several applications. To this end, we shall use a further refinement of the notion of double forms, namely Bianchi forms \cite{Kul72}, which we recall; see \cite{KL21a,KL23} for detailed surveys.

In the setting of \secref{sec:double_forms}, if $(\vartheta^{i})$ is the dual orthonormal co-frame for $(E_i)$, the {Bianchi sum} is the map
\[
\frakG\psi = \sum_{i=1}^n \vartheta^i \wedge i^V_{E_i}\psi, \qquad \frakG:\Lkm{k}{m} \to \Lkm{k+1}{m-1}.
\]
For $k \leq m$, this allows us to define the bundle of {Bianchi $(k,m)$-covectors}:
\[
\Gkm{k}{m} = \Lkm{k}{m} \cap \ker \frakG.
\]
In particular, it is shown in \cite{Kul72} that $\Gkm{1}{1}$ coincides with the bundle of symmetric $(2,0)$-covectors, while $\Gkm{2}{2}$ is the bundle of $(4,0)$-covectors satisfying the {algebraic Bianchi identities}, i.e., algebraic curvature tensors. 

We denote by $\Ckm{k}{m} := \tGamma(\Gkm{k}{m})$ the associated space of section, which is the module of {Bianchi forms}. In this framework, one has $\Ein_{g} \in \Ckm{1}{1} = \SM$ and $\Rm_{g} \in \Ckm{2}{2}$, related by
\beq 
\Ein_{g} = \mathrm{E}_{g} \Rm_{g},
\label{eq:Einstein_Rm}
\eeq 
where $\mathrm{E}_{g}:\Ckm{2}{2} \to \Ckm{1}{1}$ and $\mathrm{C}_{g}:\Ckm{1}{1} \to \Ckm{1}{1}$ were introduced in \eqref{eq:E_C_contractions}. In the notation of \thmref{thm:duality_Riem}, we shall prove the following:

%%%%%%%%%%%%%%%%
\begin{proposition}
\label{prop:Eisntein_duality}
Let $\psi \in \Ckm{2}{2}$ and $\sigma \in \Ckm{1}{1}$. Then,
\beq
\begin{split}
&\starG \starGV (g^{\,n-3} \psi) = (n-3)! \, \mathrm{E}_{g}\psi, \qquad n \geq 3, \\[0.3em]
&\starG \starGV (g^{\,n-2}  \sigma) = (n-2)! \, \mathrm{C}_{g}\sigma, \qquad n \geq 2.
\end{split}
\label{eq:Einstein_relation}
\eeq
\end{proposition}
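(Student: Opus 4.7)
The plan is to work pointwise in a local orthonormal coframe $(\vartheta^i)$ with dual frame $(E_i)$, and reduce both identities to a combinatorial computation on the basis $\{\vartheta^I\otimes\vartheta^J\}_{|I|=k,\,|J|=m}$ of $\Lkm{k}{m}$. In such a frame, $g=\sum_i\vartheta^i\otimes\vartheta^i$, and a short induction gives $g^N = N!\sum_{|I|=N}\vartheta^I\otimes\vartheta^I$; since $\star\vartheta^I=\epsilon_I\vartheta^{I^c}$ with $\epsilon_I^2=1$ cancelling between the two slots, a slotwise application of $\starG\starGV$ immediately produces the scalar identity
\[
\starG\starGV(g^N) \;=\; \tfrac{N!}{(d-N)!}\,g^{d-N}.
\]
Specializing to $N=d-1$ and $N=d-2$ already settles both proposition identities on the test elements $\sigma=g$ and $\psi=g^{2}$, since a direct expansion from the definitions of $\mathrm{C}_g$ and $\mathrm{E}_g$ yields $(d-2)!\,\mathrm{C}_g g=(d-1)!\,g$ and $(d-3)!\,\mathrm{E}_g g^{2}=(d-1)!\,g$.

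For arbitrary $\sigma\in\Ckm{1}{1}$ and $\psi\in\Ckm{2}{2}$, the key intermediate step is a recursion that trades an interior factor of $g\,\wedge$ for an exterior $\trace_g$. Starting from the classical exterior-algebra identity $\star(\vartheta\wedge\alpha)=(-1)^{|\alpha|}i_{\vartheta^\sharp}\star\alpha$, applied in each slot of $(\vartheta^i\wedge_1)(\vartheta^i\wedge_2)\psi$ and summed over $i$, I expect to derive
\[
\starG\starGV(g\wedge\psi) \;=\; \varepsilon_{k,m}\,\trace_g\bigl(\starG\starGV\psi\bigr), \qquad \psi\in\Lkm{k}{m},
\]
for an explicit sign $\varepsilon_{k,m}\in\{\pm1\}$. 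Iterating $N$ times gives $\starG\starGV(g^N\wedge\psi)=\varepsilon_{k,m}^{\,N}\,\trace_g^{\,N}(\starG\starGV\psi)$, reducing the proposition to explicit formulas for $\trace_g^{\,d-2}(\starG\starGV\sigma)$ and $\trace_g^{\,d-3}(\starG\starGV\psi)$. For the $(1,1)$-case I would split $\sigma=\sigma_0+\tfrac{1}{d}(\trace_g\sigma)\,g$: the trace part is handled by the scalar identity above, and for $\sigma_0$ traceless a direct evaluation on the basis elements $\vartheta^a\otimes\vartheta^b+\vartheta^b\otimes\vartheta^a$ and $\vartheta^a\otimes\vartheta^a-\vartheta^b\otimes\vartheta^b$ ($a\neq b$) yields the required scalar multiple of $\sigma_0$. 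For the $(2,2)$-case I would use the analogous Bianchi-algebra decomposition into Weyl-, Ricci- and scalar-like pieces (so that the Ricci-like piece has the form $g\wedge\tau$ for $\tau\in\Ckm{1}{1}$) and reduce to the $(1,1)$-case just handled.

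The main obstacle, and the one requiring the most care, is sign bookkeeping: the Koszul sign inherent in the $\Lkm{\bullet}{\bullet}$-wedge, the $(-1)^{|\alpha|}$ contributed by the Hodge-star identity in each slot, and the $N$-fold accumulation of both, interact non-trivially, and each convention subtly affects the value of $\varepsilon_{k,m}$. My plan is to pin these signs and combinatorial constants down by first verifying both identities on the explicit test cases $\sigma=g$ and $\psi=g^{2}$, where every quantity is computable in closed form from the scalar formula $\starG\starGV(g^N)=\tfrac{N!}{(d-N)!}g^{d-N}$ above; the general recursion then determines the sign on the remaining summands of the decomposition unambiguously.
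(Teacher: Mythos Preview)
Your recursion $\starG\starGV(g\wedge\psi)=\trace_g(\starG\starGV\psi)$ is exactly the engine the paper uses, and your treatment of the $(1,1)$ case via the trace/traceless split would go through. The gap is in the $(2,2)$ case: the Weyl component does \emph{not} reduce to the $(1,1)$ case. Only the Ricci-like piece $g\wedge\tau$ and the scalar piece do; for a Weyl-type $W$ the right-hand side $(d-3)!\,\mathrm{E}_gW$ vanishes, so you would have to show separately that $\starG\starGV(g^{d-3}W)=0$, equivalently that $g^{d-3}\wedge W=0$ in $\Lkm{d-1}{d-1}$. This is true but not a formality: it requires an explicit use of the tracelessness $\sum_kW_{ikjk}=0$, essentially an index computation of the same flavour as the one you are trying to bypass. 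Your plan as written skips this entirely.

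The paper sidesteps the Weyl issue by not decomposing into irreducibles at all. It invokes Kulkarni's result that $\Gkm{k}{k}$ is spanned fibrewise by squares $(\omega^1\wedge\cdots\wedge\omega^k)\wedge(\omega^1\wedge\cdots\wedge\omega^k)^T$; after Gram--Schmidt these become $(\vartheta^{1}\wedge\cdots\wedge\vartheta^{k})^2$, on which $\starG\starGV$ acts by $I\mapsto I^c$, and both sides of the identity collapse to a single combinatorial sum. Because these simple squares mix all irreducible components, no separate Weyl verification is needed. If you keep your route you must add a proof of $g^{d-3}\wedge W=0$ for traceless $W\in\Ckm{2}{2}$; otherwise the Kulkarni spanning set gives a shorter and more sign-robust argument.
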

%%%%%%%%%%%%%%%%%%%
Taking $\psi=\Rm_{g}$ then gives the proof of \thmref{thm:duality_Riem}.

\begin{proof}
Since the identities in \eqref{eq:Einstein_relation} are tensorial, it suffices to verify them fiber-wise. By \cite[Thm.~3.1]{Kul72}, $\Gkm{k}{k}$ is spanned fiber-wise by elements of the form
\[
\omega^{1} \wedge \cdots \wedge \omega^{k} \wedge (\omega^{1} \wedge \cdots \wedge \omega^{k})^{T}, 
\qquad \omega^{i} \in \Lkm{1}{0}.
\]
For $k=1$, this is the polarization identity for symmetric bilinear forms, and for $k=2$ it is its analogue for algebraic curvature tensors (\cite[~3.4.29]{Pet16}).

Assume that $(\omega^{i})_{i=1}^{k}$ are independent covectors. After Gram--Schmidt, we may take $\omega^{i} = \vartheta^{i}$, the first $k$ elements of an orthonormal co-frame $(\vartheta^{j})_{j=1}^{n}$ with dual frame $(E_{j})_{j=1}^{n}$. For brevity, write
\[
(\vartheta^{i_{1}} \wedge \cdots \wedge \vartheta^{i_{k}}) \wedge (\vartheta^{i_{1}} \wedge \cdots \wedge \vartheta^{i_{k}})^{T} 
= (\vartheta^{i_{1}} \wedge \cdots \wedge \vartheta^{i_{k}})^{2}.
\]

Now take $\psi = (\vartheta^{1} \wedge \cdots \wedge \vartheta^{k})^{2}$ and assume $n \geq k+1$. Using \cite[Lem.~3.4]{KL21a}, together with the action of the Hodge star on an oriented orthonormal basis, we obtain
\[
\begin{split}
\starG \starGV \bigl(g^{\,n-k-1} (\vartheta^{1} \wedge \cdots \wedge \vartheta^{k})^{2}\bigr) 
&= \trace_{g}^{\,n-k-1}\bigl(\starG \starGV (\vartheta^{1} \wedge \cdots \wedge \vartheta^{k})^{2}\bigr) \\
&= \trace_{g}^{\,n-k-1}(\vartheta^{k+1} \wedge \cdots \wedge \vartheta^{n})^{2}.
\end{split}
\]
Since $\vartheta^{j}(E_{i}) = \delta_{i}^{j}$, a combinatorial argument using \eqref{eq:trace} gives
\beq 
\trace_{g}^{\,n-k-1}(\vartheta^{k+1} \wedge \cdots \wedge \vartheta^{d})^{2} 
= (n-k-1)! \sum_{j=k+1}^{n} (\vartheta^{j})^{2}.
\label{eq:duality_1} 
\eeq 

On the other hand, for $k=2$, applying $\mathrm{E}_{g}:\Ckm{2}{2}\to\Ckm{1}{1}$ yields
\beq 
\mathrm{E}_{g}\big((\vartheta^{1} \wedge \vartheta^{2})^{2}\big) 
= -(\vartheta^{1})^{2} - (\vartheta^{2})^{2} + g 
= \sum_{j=3}^{n} (\vartheta^{j})^{2},
\label{eq:duality_2} 
\eeq 
where we used $g = \sum_{j=1}^{n} (\vartheta^{j})^{2}$. Comparing \eqref{eq:duality_1} with \eqref{eq:duality_2} proves the first identity in \eqref{eq:Einstein_relation}.

Similarly, for $k=1$, applying $\mathrm{C}_{g}:\Ckm{1}{1}\to\Ckm{1}{1}$ gives
\beq 
\mathrm{C}_{g}\big((\vartheta^{1})^{2}\big) 
= -(\vartheta^{1})^{2} + \trace_{g} (\vartheta^{1})^{2}\, g
= -(\vartheta^{1})^{2} + g 
= \sum_{j=2}^{n} (\vartheta^{j})^{2}.
\label{eq:duality_3} 
\eeq 
Comparing \eqref{eq:duality_1} with \eqref{eq:duality_3} proves the second identity in \eqref{eq:Einstein_relation}.
\end{proof}
%%%%%%%%%%%%%%%%%%%%%%%%
%%%%%%%%%%%%%%%%%%%%%%%%
\subsection{Application: Einstein's Green's formula}
\label{sec:Einstein_green} 
As a first application of the duality outlined above, we provide a proof for \propref{prop:normal_derivative_DA}. 

Recall again  $\Hg : \SM = \Ckm{1}{1} \to \Ckm{2}{2}$, the leading-order differential term in the expression linearized curvature tensor $\D\Rm_{g}$ in \eqref{eq:DRm}. By applying the chain rule to \eqref{eq:Einstein_Rm}, we may therefore write
\beq
\D\Ein_{g} = \tfrac{1}{2}\bigl(\mathrm{E}_{g} \Hg + E D_{g}\bigr),
\label{eq:Ein_variation}
\eeq
where $E D_{g}$ denotes a tensorial operation.
%%%%%%%%%%%%%%%
\begin{PROOF}{\propref{prop:normal_derivative_DA}}
Note that $\mathrm{B}_{g}\D\Ric_{g}$ is indeed formally self-adjoint by virtue of $\mathrm{B}_{g}$ being parallel and the properties of the Lichnerowicz curvature operator in \eqref{eq:Lic_prop}. Since $\mathrm{B}_{g}\D\Ric_{g}$ differs from $\D\Ein_{g}$ only by lower-order terms (by the chain rule applied to \eqref{eq:Ein_B_Ric}), the result follows—once $(\mathrm{E}_{g} \Hg)^* = \mathrm{E}_{g} \Hg$ has been established—by iterating the Green's formula for $\Hg$ in \eqref{eq:Hg_Green} and the identities \eqref{eq:Einstein_relation}. We note that the tensorial term integrates without a boundary contribution; hence, all these operators share the same boundary integral in their corresponding Green's formulas.

To show that $(\mathrm{E}_{g} \Hg)^* = \mathrm{E}_{g} \Hg$, observe that by \eqref{eq:Einstein_relation} and the dualities between $\Hg$ and $\Hg^*$, between $\starG \starGV$ and itself, and between $\trace_{g}$ and $g \wedge$, 
\[
(\mathrm{E}_{g}\Hg)^* = \tfrac{1}{(n-3)!} (\starG \starGV \, g^{n-3} \Hg )^* = \tfrac{1}{(n-3)!} \Hg^* \, \trace_{g}^{\,n-3} \, \starG \starGV.
\]
Since $\Hg^*$ commutes with $\trace_{g}$ \cite[Prop.~3.10]{KL21a}, and $\starG \starGV \Hg^* = \Hg \starG \starGV$:
\[
(\mathrm{E}_{g}\Hg)^* = \tfrac{1}{(n-3)!} \trace_{g}^{\,n-3} \, \starG \starGV \, \Hg.
\]
The proof is completed by invoking the relation $
\trace_{g}^{\,n-3} \, \starG \starGV = \starG \starGV \, g^{n-3}$. 

\end{PROOF}
\subsection{Application: Einstein's constraint equation}
\label{sec:constraint_duality} 
%%%%%%%%%%%%%%%%%%%%
\begin{proposition}
\label{prop:electric_idenity} 
The following constraint equations hold,
\beq
\begin{aligned} 
&\PttD\Ein_{g} = \Ein_{\gD} + \mathrm{C}_{\gD} \PnnG \Rm_{g} + \tfrac{1}{2}\mathrm{E}_{\gD}(\Ah_{g} \wedge \Ah_{g}), \qquad &&\text{for } n\geq 3, \\[0.5em]
&\PttD\Ein_{g} = \mathrm{C}_{\gD} \PnnG \Rm_{g} + \mathrm{E}_{\gD}\PttD\Rm_{g}, \qquad &&\text{for } n\geq 3, \\[0.5em]
&(\tfrac{n-3}{n-2}) \PttD \Ein_{g} = \Ein_{\gD} - \PnnG \mathrm{Wey}_{g} + \tfrac{1}{2}\mathrm{E}_{\gD}(\Ah_{g} \wedge \Ah_{g}),  \qquad &&\text{for } n>3.
\end{aligned}
\label{eq:electric_constraints}
\eeq
\end{proposition}
%%%%%%%%%%%%%%%%%%%
\begin{proof}
Applying $\PttD$ on \eqref{eq:Einstein_Rm}, and expanding $\PttD \Ein_{g}$ together with the commutation relation $\PttD \trace_{g} - \trace_{\gD}\PttD = \PnnG$ and the Gauss equations \cite[p.~704]{KL21a}, yields after some manipulation:
\[
\begin{split}
\PttD \Ein_{g} 
&=-\PnnG \Rm_{g} + \trace_{\gD} \PnnG \Rm_{g}\,\gD - \trace_{\gD} \Rm_{\gD} + \tfrac{1}{2}\trace_{\gD}\trace_{\gD}\Rm_{\gD}\,\gD \\
&\qquad - \tfrac{1}{2}\trace_{\gD}(\Ah_{g}\wedge\Ah_{g}) + \tfrac{1}{4}\trace_{\gD}\trace_{\gD}(\Ah_{g}\wedge\Ah_{g})\,\gD.
\end{split}
\]
Regrouping terms, we obtain the first and second identities in \eqref{eq:electric_constraints}. 

For the third identity, let $\mathrm{P}_{g} \in \Ckm{1}{1}$ be the {Schouten tensor} of $g$, and recall the Kulkarni–Nomizu product and the Weyl tensor \cite[p.~109]{Pet16}. Since the Kulkarni-Nomizu product differs from the wedge by a sign \cite{Kul72}:  
\beq
\Rm_{g} = -g \wedge \mathrm{P}_{g} + \mathrm{Wey}_{g}.
\label{eq:orthogonal_decompostion_Rm} 
\eeq
Thus,
\[
\begin{aligned}
\mathrm{C}_{\gD} \PnnG \Rm_{g} 
&= -\mathrm{C}_{\gD}(\PttD \mathrm{P}_{g} + \PnnG \mathrm{P}_{g}\,\gD) + \mathrm{C}_{\gD}\PnnG \mathrm{Wey}_{g}.
\end{aligned}
\]
On the other hand, combining \eqref{eq:orthogonal_decompostion_Rm},\eqref{eq:E_C_contractions},\eqref{eq:Einstein_relation} and $\trace_{g}\mathrm{Wey}_{g}=0$, gives
\[
\Ein_{g} = -(n-2)\,\mathrm{C}_{g}\mathrm{P}_{g}, \qquad \mathrm{C}_{\gD}\PnnG \mathrm{Wey}_{g}=-\PnnG \mathrm{Wey}_{g}.
\]
Hence,
\[
\PttD \Ein_{g} = -(n-2)\,\mathrm{C}_{\gD}(\PttD \mathrm{P}_{g} + \PnnG \mathrm{P}_{g}\,\gD).
\]

Comparing this with the expression for $\mathrm{C}_{\gD}\PnnG \Rm_{g}$ above and using the first identity in \eqref{eq:electric_constraints}, we obtain the second.
\end{proof} 
%%%%%%%%
\subsection{Ellipticity of divergence-free gauged Einstein equations}
\label{sec:ellipticity}
%%%%%%%%%%
We now establish the overdetermined ellipticity referenced in the proof of \propref{prop:overdetermined_elliptiicty_einstein}, specifically that of the system \eqref{eq:overdetermined_Ricci}:
\[
\mathrm{B}_{g}\D\Ric_{g}\oplus\delBianchi\oplus\PttD\oplus\frakT_{g}.
\]
By the equivalence between overdetermined ellipticity and semi-Fredholmness (\secref{sec:OD}), and since $\mathrm{E}_{g}\Hg$ is the leading-order term in \eqref{eq:Ein_variation} (and thus in $\mathrm{B}_{g}\D\Ric_{g}$ via \eqref{eq:Ein_B_Ric}), it suffices to prove
\beq
\begin{split}
\|\sigma\|_{3} &\lesssim  
\|\mathrm{E}_{g}\Hg\sigma\|_{1} + \|\delBianchi\sigma\|_{2} + \|\PttD\sigma\|_{5/2}
+\|\frakT_{g}\sigma\|_{3/2}+ \|\sigma\|_{2}
\end{split}
\label{eq:Einstien_estimate} 
\eeq
for all $\sigma\in \SM$, with ``$\lesssim$'' denoting inequality up to a $\sigma$-independent constant. We establish this by composing semi-Fredholm operators and discarding compact perturbations, under which semi-Fredholmness is stable. 

By \propref{prop:normal_derivative_DA}, $\PttD \oplus \frakT_{g}$ is a normal system of trace operators, admitting a right inverse $I : W^{5/2,2}\plCkm{1}{1} \oplus W^{3/2,2}\plCkm{1}{1} \rightarrow W^{3,2}\Ckm{1}{1}$ (\propref{prop:normality}). Replacing $\sigma$ with $\sigma - I(\PttD\sigma, \frakT_{g}\sigma)$ in \eqref{eq:Einstien_estimate}, we therefore may, instead of \eqref{eq:Einstien_estimate}, demonstrate that for all $\sigma \in \ker(\PttD, \frakT_{g})$: 
\beq
\begin{split}
\|\sigma\|_{3} &\lesssim  
\|\mathrm{E}_{g}\Hg\sigma\|_{1} + \|\delBianchi\sigma\|_{2} + \|\sigma\|_{2}. 
\end{split}
\label{eq:Einstien_estimate1} 
\eeq
(See the procedure in \cite[Thm. 5.13]{KL21a} for an example of how adding back the extension yields the non-homogeneous bound.)

Since the Hodge star $\starG\starGV$ is a Sobolev isometry \cite[p.~40]{Sch95b}, \propref{prop:Eisntein_duality} and the continuity of
\[
\begin{gathered} 
\delBianchi: W^{1,2} \Ckm{1}{1} \to L^{2} \Ckm{0}{1},
\\
\PnnG: W^{1,2} \Ckm{1}{1} \to W^{1/2,2} \scrC_{\dM}^{0,0}, \qquad 
\PntG: W^{1,2} \Ckm{1}{1} \to W^{1/2,2} \scrC_{\dM}^{0,1},
\end{gathered}
\]
yield the lower bound
\[
\|\mathrm{E}_{g}\Hg\sigma\|_{1} \gtrsim \|g^{n-3}\Hg\sigma\|_{1}
\gtrsim \|\delBianchi g^{n-3}\Hg\sigma\|_{0}
+ \|\PnnG g^{n-3}\Hg\sigma\|_{1/2}
+ \|\PntG g^{n-3}\Hg\sigma\|_{1/2}.
\]

Iterating the commutation relations \cite[Sec.~5.2]{KL23},
\[
\delBianchi g^{1} + g^{1}\delBianchi = -\dBianchiV, \qquad 
\PnnG g^{1} = \PttD + \gD^{1}\PnnG, \qquad 
\PntG g^{1} = -g^{1}\PntG,
\]
provides that this becomes, after adding $\|\delta_{g}\sigma\|_{2}+\|\sigma\|_{2}$: 
\beq
\begin{split}
\|\mathrm{E}_{g}\Hg\sigma\|_{1}+\|\delta_{g}\sigma\|_{2}+\|\sigma\|_{2}
&\gtrsim \|g^{n-3}\delBianchi \Hg\sigma +(n-3)! g^{n-4}\dBianchiV\Hg\sigma\|_{0} \\
&\quad + \|\gD^{n-4}\PttD\Hg\sigma +(n-3)! \gD^{n-3}\PnnG\Hg\sigma\|_{1/2} 
\\
&\quad + \|\gD^{n-3}\PntG \Hg\sigma\|_{1/2}+\|\delta_{g}\sigma\|_{2}+\|\sigma\|_{2}.
\end{split}
\label{eq:Eg_lower_bound} 
\eeq

Now, as $\ord(\dBianchiV\Hg) \leq 1$ \cite[Sec.~5.5]{KL23}, $\dBianchiV\Hg : W^{3}\Ckm{1}{1} \to W^{0}\Ckm{2}{3}$ is compact and thus negligible for semi-Fredholmness. Similarly, applying the commutation formulae \cite[p.~706--707]{KL21a} for $\sigma\in\ker(\PttD, \frakT_{g})$, the mappings $\sigma\mapsto\PttD\Hg\sigma:W^{3,2}\cap\ker(\PttD, \frakT_{g})\rightarrow W^{5/2,2}$ and $\sigma\mapsto\PntG \Hg\sigma:W^{3,2}\cap\ker(\PttD, \frakT_{g})\rightarrow W^{3/2,2}$ reduce to compact operations.  

Consequently, returning to \eqref{eq:Eg_lower_bound}, it suffices to show that the system
\beq 
g^{n-3} \delBianchi \Hg \oplus \delBianchi \oplus \PttD \oplus \frakT_{g} \oplus g^{n-3} \PnnG \Hg
\label{eq:effective_system}
\eeq 
is overdetermined elliptic. Indeed, appending the negligible compact terms 
\[
g^{n-4} \dBianchiV \Hg, \qquad \gD^{n-4} \PttD \Hg, \qquad \gD^{n-3} \PntG \Hg,
\]
leaves overdetermined ellipticity unaltered and yields the extended system
\[
(g^{n-3} \delBianchi \Hg + g^{n-4} \dBianchiV \Hg)
\oplus \delBianchi
\oplus \PttD
\oplus \frakT_{g}
\oplus (g^{n-3} \PnnG \Hg + \gD^{n-4} \PttD \Hg)
\oplus \gD^{n-3} \PntG \Hg.
\]
The a priori estimate for this extended system corresponds to the right-hand side of \eqref{eq:Eg_lower_bound} for $\sigma \in \ker(\PttD, \frakT_{g})$, exactly yielding \eqref{eq:Einstien_estimate1} as it controls $\|\sigma\|_{3}$.

It therefore remains to verify the overdetermined ellipticity of \eqref{eq:effective_system}. We first consider the simpler system
\beq 
g^{n-3} \delBianchi \oplus \dBianchi \oplus \PnnG \oplus \gD^{n-4} \PntG,
\label{eq:simpler_system} 
\eeq 
which is clearly overdetermined elliptic: the injective bundle maps $g^{n-3}$ and $\gD^{n-4}$ (cf.~\cite[Prop.~2.5]{Kul72}) define semi-Fredholm mappings, and here they are composed upon the standard covariant de-Rham system, classically known to be overdetermined elliptic (cf.~\cite[Prop.~4.15]{Led25B}):
\[
\delBianchi\oplus \dBianchi\oplus \PnD_{g},
\]
Thus, \eqref{eq:simpler_system} is a composition of semi-Fredholm operators, hence semi-Fredholm itself. Composing then \eqref{eq:simpler_system} on the left with the overdetermined elliptic system from \cite[Sec.~5.5]{KL23},
\[
\Hg \oplus \delBianchi \oplus \PttD \oplus \frakT_{g},
\]
yields a system equivalent to \eqref{eq:effective_system}; the compactness of $\dBianchi \Hg$ and negligibility of $\PntG \Hg$ relative to $\PttD$ and $\frakT_{g}$ ensure these lower-order terms do not alter the semi-Fredholm analysis. Retracing these steps completes the proof.
\appendix

%%%%%%%%%%%%%%%%%%
\section{Construction of the lifted complex}
\label{app:Construction}
%%%%%%%%%%%%%%%%%%%%%%

This appendix aims to distill the technical machinery required for the Hodge theory presented in \secref{sec:Hodge_intro} to take form; namely, to provide the proofs of \thmref{thm:lifted_complex}, \thmref{thm:Hodge_decomposition}, \thmref{thm:cohomology_dirichlet}, and \propref{prop:disrupted_valid}. We do not discuss the theory or its various definitions here, and emphasize that this appendix is included with the purpose of making the paper and the theory as self-contained and clear as possible. 

As explained in \secref{sec:Hodge_intro}, the developments here are essentially an adaptation of \cite[Sec.~4]{KL23} and \cite[Ch.~3.3]{Led25B} to the specific setting of \secref{sec:Hodge_intro}. However, since \cite{Led25B} includes much more than is required here, we deemed it appropriate to assemble this appendix.
%%%%%%%%%%%%
\subsection{Adapted Green operators} 
%%%%%%%%%%%%
We rely on \cite[Sec.~2]{KL23} for the introduction of Green operators, the Boutet de Monvel calculus, and the notion overdetermined ellipticity. We deviate slightly from the notation used there, and, for simplicity, let $\OP(m,r)$ denote the class of Green operators of order $m\in\bbR$ and class $r\in\bbZ$ between sections of vector bundles $\tGamma(\bbE)\to\tGamma(\bbF)$ (usually implied by context) (cf.~\cite[Sec.~2]{KL23} or \cite[Ch.~3.1]{Led25B}).

The operators of the lifted complex in \thmref{thm:lifted_complex} will be shown to be {adapted Green operators} in the sense of \cite[Def.~3.1]{KL23}. For this purpose, we briefly recall the relevant notions and constructions from \cite[Sec.~3.1]{KL23}. We remind the reader that the theory in \cite{KL23} is formulated for {Neumann} conditions, although this terminology is not used there; cf.~\cite[Ch.~1.2]{Led25B} and \cite[Ch.~3.1]{Led25B} for full generality.

We recall from \cite[Def.~3.1]{KL23} that an \emph{adapted Green operator} $\fbD\in\OP(m,0)$ is an operator $\fbD:\Gamma(\bbE)\rightarrow\Gamma(\bbF)$ in the calculus that is related by Green's formula to its adjoint $\fbD^*\in\OP(m,0)$ (which is also an adapted Green operator), with corresponding normal systems of boundary operators $B,B^*$:
\[
\bra \fbD\psi,\theta\ket=\bra \psi,\fbD^*\theta\ket+\bra B\psi,B^*\theta\ket.  
\]
Given an adapted Green operator $\fbD$, we set
\[
\begin{aligned} 
&\scrR(\fbD):=\image\fbD, \quad &&\scrR(\fbD;\bB):=\image(\fbD|_{\ker\bB}), 
\\
&\scrN(\fbD):=\ker\fbD, \quad &&\scrN(\fbD,\bB):=\ker(\fbD,\bB). 
\end{aligned} 
\]
We define the Sobolev versions as in \cite[Def.~3.2]{KL23} for every $s\geq 0$, restricting here to the case $p=2$, which suffices for our purposes:
\[
\scrR^{s,2}(\fbD), \qquad \scrR^{s,2}(\fbD;\bB), \qquad \scrN^{s,2}(\fbD), \qquad \scrN^{s,2}(\fbD,\bB). 
\]
For the definition of $\scrN^{0,2}(\fbD,\bB)$, one must address the fact the boundary operator does not extend continuously to $L^2$-sections; see \cite[Def.~3.2]{KL23}. In general, we have the $L^{2}$-orthogonal decompositions (\cite[Prop.~3.4]{KL23}):
\beq
\begin{split}
L^{2}(\bbF)=\overline{\scrR^{0,2}(\fbD)}\oplus\scrN^{0,2}(\fbD^*,\bB^*), \qquad 
L^{2}(\bbF)=\overline{\scrR^{0,2}(\fbD;\bB)}\oplus\scrN^{0,2}(\fbD^*).
\end{split}
\label{eq:L2_decom} 
\eeq

An adapted Green operator $\fbD$ is said to induce a {Dirichlet auxiliary decomposition} (to distinguish from the {Neumann} case, \cite[Def.~3.3]{KL23}) if there is a topologically direct, $L^{2}$-orthogonal decomposition of Fréchet spaces
\beq 
\tGamma(\bbF)=\scrR(\fbD;\bB)\oplus\scrN(\fbD^*),
\label{eq:auixllary_decompostion} 
\eeq 
together with an operator $\bP\in\OP(-m,0)$ such that $\tbP:=\fbD\bP\in\OP(0,0)$ is the projection onto $\scrR(\fbD;\bB)$ in this decomposition. We refer to \cite[pp.~32--33]{KL23} and \cite[Ch.~3.2]{Led25B} for further discussion of this construction.

\subsection{Setup of the proof} 
The proof of \thmref{thm:lifted_complex} proceeds by induction on $\alpha\leq \alpha_0$, following the general lines of \cite[Thm.~3.6]{KL23} as presented in \cite[Sec.~4]{KL23}. Again, the argument is contained in the more general treatment of \cite[Ch.~3.3]{Led25B} and, as there, is organized into five stages:
\begin{enumerate}
\item Stage 1: establishing the base of the induction and setting up the induction step, namely the lifted operator $\fbD_{\alpha}$ and the properties it must satisfy.
\item Stages 2--3: proving the required estimates, overdetermined ellipticity, and closed-range properties for the lifted operator.
\item Stage 4: showing that each lifted operator $\fbD_{\alpha}$ induces an auxiliary decomposition.
\item Stage 5: defining the next operator $\fbD_{\alpha+1}$, as characterized by \thmref{thm:lifted_complex}, and proving that it lies in the calculus and differs from $\bD_{\alpha+1}$ by an $\OP(0,0)$ term. 
\end{enumerate}
Afterwards, in \appref{app:Hodge_proof}, once all relevant operators have been lifted as required by \thmref{thm:lifted_complex}, we establish the Hodge decompositions for Dirichlet conditions (\thmref{thm:Hodge_decomposition}), which in the notation above reads as:
\beq 
\tGamma(\bbE_{\alpha+1}) =
\lefteqn{\overbrace{\phantom{\scrR{(\bA_{\alpha};\ker B_{\alpha})}\oplus \module_{\D}^{\alpha+1}}}^{\scrN(\bA_{\alpha+1},B_{\alpha+1})}}
\scrR{(\bA_{\alpha}; B_{\alpha})}
\oplus
\underbrace{\module_{\D}^{\alpha+1}\oplus \scrR{(\bA^*_{\alpha+1}})}_{\scrN (\bA_{\alpha}^*)}.
\label{eq:Hodge_smooth_proof} 
\eeq 

In \appref{app:disrupted_proof}, we treat the disrupted case and prove \propref{prop:disrupted_valid}. 

%%%%%%%%%%%%%%%%%%%%%%%%%%%%%%%%%%%%%%%%%
\subsection{Stage 1: Base and setup of induction step}
%%%%%%%%%%%%%%%%%%%%%%%%%%%%%%%%%
\label{sec:stage1}
\subsubsection{Induction base} 
For the base of the induction, it is convenient to set
\[
\fbD_{-1}:=0, \qquad \fbD_{0}:=\bD_{0},
\]
and to start at the level $\alpha=-1$. At this initial level, the induction base requires the following conditions:

\begin{enumerate}[itemsep=0pt,label=(\alph*)]
\item $\fbD_{-1}$ induces an auxiliary decomposition:
\[
\tGamma(\bbE_0) = \scrR(\fbD_{-1};\bB_{-1}) \oplus \scrN(\fbD_{-1}).
\]
\item It holds that
\[
\scrR(\fbD_{-1};\bB_{-1}) \subseteq \scrN(\fbD_0).
\]
\item The following relation holds:
\[
\fbD_0 = \bD_0 \quad \text{ on } \quad \scrN(\fbD_{-1}^*).
\]
\item $\fbD_0 = \bD_0 + \bC_0$ is an adapted Green operator, with
\[
\bC_0=-A_{0}A_{-1}\bP_{-1}.
\]
\end{enumerate}

Since $\fbD_{-1}=0$, these requirements are satisfied trivially: indeed, $\scrR(\fbD_{-1};\bB_{-1})=\{0\}$ while $\scrN(\fbD_{-1}^*)=\tGamma(\bbE_{0})$ as $\fbD_{-1}^*=0$ and $\bB_{-1}^*=0$; condition (c) then implies that $\fbD_0=\bD_0$ identically, which is indeed the case. Condition (d) is also satisfied trivially, since $\fbD_0=\bD_0$ forces $\bC_0=0$.

\subsubsection{Induction Hypothesis} 
Throughout, let $\alpha\leq \alpha_0$ and set
\[
m_{\alpha}:=\ord(\bD_{\alpha}).
\] 
Since $\fbD_{\alpha}$ is a lifting of $\bD_{\alpha}$ in the sense of \thmref{thm:lifted_complex}, then, as the difference is in $\OP(0,0)$, we have $\fbD_{\alpha}\in\OP(m_{\alpha},0)$.

With this understood, interpreting the conditions in \thmref{thm:lifted_complex} and the ingredients of an auxiliary decomposition \eqref{eq:auixllary_decompostion}, the induction hypothesis is that $\fbD_{\alpha}$ and $\fbD_{\alpha-1}$ have been defined so that the following hold:

%%%%%%%%%%%%%%%%%%%%%%%%%%%
\begin{enumerate}[itemsep=0pt,label=(\alph*)]
\item $\fbD_{\alpha-1}$ induces a Dirichlet auxiliary decomposition as in \eqref{eq:auixllary_decompostion}. We emphasis that this corresponds to the existence of $\bP_{\alpha-1}\in\OP(-m_{\alpha-1},0)$ such that $\tbP_{\alpha-1}=\fbD_{\alpha-1}\bP_{\alpha-1}\in\OP(0,0)$ is the continuous projection onto the range in the following $L^2$-orthogonal, topologically direct decompositions:
\beq
\begin{aligned}
\tGamma(\bbE_{\alpha}) = \scrR(\fbD_{\alpha-1}; \bB_{\alpha-1}) \oplus \scrN(\fbD^*_{\alpha-1}).
\end{aligned}
\label{eq:aux_induction}
\eeq
For convenience, we also include in the induction hypotheses that by intersecting with $\ker\bB_{\alpha}$ we have: 
\beq
\begin{aligned}
\tGamma(\bbE_{\alpha}) \cap \ker \bB_{\alpha}
= \scrR(\fbD_{\alpha-1}; \bB_{\alpha-1})
\oplus (\scrN(\fbD_{\alpha-1}^*) \cap \ker \bB_{\alpha}).
\label{eq:aux_induction_Refined}
\end{aligned}
\eeq
\item The following containment holds:
\beq
\begin{aligned}
\scrR(\fbD_{\alpha-1}; \bB_{\alpha-1}) \subseteq \scrN(\fbD_\alpha),
\end{aligned} 
\label{eq:containments_induction} 
\eeq
with the further identity:
\beq
\scrR(\fbD_{\alpha-1}; \bB_{\alpha-1}) \subseteq \ker\bB_{\alpha}.
\label{eq:containments_inductionM} 
\eeq

\item The following relations hold:
\beq
\begin{aligned}
\fbD_\alpha = \bD_\alpha \text{ on } \scrN(\fbD_{\alpha-1}^*),
\end{aligned} 
\label{eq:relations_induction}
\eeq
\item $\fbD_{\alpha} = \bD_{\alpha} + \bC_{\alpha}$ is an adapted Green operator, with boundary operators $B_{\alpha},B_{\alpha}^*$, $\bC_{\alpha} \in \OP(0,0)$ and
\beq
\bC_{\alpha} = -\bD_{\alpha} \bD_{\alpha-1} \bP_{\alpha-1} = -\bD_{\alpha} \tbP_{\alpha-1}.
\label{eq:correction_induction}
\eeq
\end{enumerate}

\subsubsection{Induction step} Under the induction hypothesis, the remainder of the next four subsections is devoted to proving the following:
\begin{enumerate}[itemsep=0pt,label=(\alph*)]
\item The operator $\fbD_{\alpha}$ induces an auxiliary decomposition as in \eqref{eq:auixllary_decompostion}: specifically, there exists a map $\bP_{\alpha}\in\OP(-m_{\alpha},0)$ such that the operator $\tbP_{\alpha}=\fbD_{\alpha}\bP_{\alpha}\in\OP(0,0)$ is the continuous projection onto the range in the following $L^2$-orthogonal, topologically direct decomposition:
\beq
\begin{aligned}
\tGamma(\bbE_{\alpha+1}) = \scrR(\fbD_{\alpha}; \bB_{\alpha}) \oplus \scrN(\fbD^*_{\alpha}).
\end{aligned}
\label{eq:aux_induction_step}
\eeq
We also establish the refined version:
\beq
\begin{aligned} 
\tGamma(\bbE_{\alpha+1}) \cap \ker \bB_{\alpha+1}
= \scrR(\fbD_{\alpha}; \bB_{\alpha})
\oplus \left(\scrN(\fbD_{\alpha}^*) \cap \ker \bB_{\alpha+1} \right).
\label{eq:aux_induction_stepRefined}
\end{aligned} 
\eeq 
\item There exists a system $\fbD_{\alpha+1}:\tGamma(\bbE_{\alpha+1})\rightarrow\tGamma(\bbF_{\alpha+2};\bbG_{\alpha+2})$ such that the following containment hold:
\beq
\begin{aligned}
\scrR(\fbD_{\alpha};\bB_{\alpha}) \subseteq \scrN(\fbD_{\alpha+1}),
\end{aligned} 
\label{eq:contain_induction_step}
\eeq
with the further identity:
\beq
\scrR(\fbD_{\alpha};\bB_{\alpha}) \subseteq \ker\bB_{\alpha+1}.
\label{eq:contain_induction_stepM} 
\eeq
\item The following relations hold:
\beq
\begin{aligned}
\fbD_{\alpha+1} = \bD_{\alpha+1} \text{ on } \scrN(\fbD_{\alpha}^*),
\end{aligned} 
\label{eq:relation_induction_step}
\eeq
\item $\fbD_{\alpha+1} = \bD_{\alpha+1} + \bC_{\alpha+1}$ is an adapted Green operator, with boundary operators $B_{\alpha+1},B_{\alpha+1}^*$, $\bC_{\alpha+1} \in \OP(0,0)$ and
\beq
\bC_{\alpha+1} = -\bD_{\alpha+1} \bD_{\alpha} \bP_{\alpha} = -\bD_{\alpha+1} \tbP_{\alpha}.
\label{eq:correction_induction_step}
\eeq
\end{enumerate}
%%%%%%%%%%%%%%%%%%%%%%%%%%%%%%%%%%%%%%%%%%%%%%%%%%%%
\subsection{Stage 2: Additional elliptic estimates}
%%%%%%%%%%%%%%%%%%%%%%%%
Consider the system:
\beq
\begin{split}
\fbD_{\alpha}\oplus\fbD_{\alpha-1}^* \oplus \bB_{\alpha}.
\end{split}
\label{eq:corrected_D_N_adjoints}
\eeq
Due to the induction hypothesis, we have that $\fbD_{\alpha}-\bD_{\alpha}\in\OP(0,0)$. Hence, by comparison with the overdetermined ellipticity in \defref{def:elliptic_pre_complex}, it follows that the system in \eqref{eq:corrected_D_N_adjoints} is also overdetermined elliptic, as it differs from the original overdetermined elliptic system only by lower-order terms (which, from the perspective of \secref{sec:OD}, are compact perturbations).

Now, by the composition rules of the calculus, the same holds for the corresponding lifted systems obtained by composing the overdetermined ellipticities at $\alpha$ and $\alpha-1$, and neglecting compact terms resulting from the order-reduction properties in \thmref{thm:lifted_complex} (see \cite[Prop.~4.3]{KL23} for an explicit demonstration how this is done on the symbol level):
\beq
\begin{aligned}
\fbD^*_{\alpha} \fbD_{\alpha} \oplus \fbD^*_{\alpha-1} \oplus \bB_{\alpha}.
\end{aligned}
\label{eq:corrected_D_N_adjointsII}
\eeq

We record these observations as a corollary:
%%%%%%%%%%%%%%%%%%%%%%%%%
\begin{corollary}
\label{corr:overdetermined_ellipticity_corrected}
Under the induction hypothesis, the systems in \eqref{eq:corrected_D_N_adjoints} and \eqref{eq:corrected_D_N_adjointsII} are overdetermined elliptic.
\end{corollary}

%%%%%%%%%%%%%%%%%%%%%%%%%
The kernel of an overdetermined elliptic systems is always finite dimensional and consists of smooth sections (cf.~\cite[Prop.~2.9]{KL23}) which due to the above translates to the finite-dimensionality of:
\beq
\begin{aligned}
\module^{\alpha}_{\D}:=\ker(\fbD_{\alpha},\fbD_{\alpha-1}^*,\bB_{\alpha}) = \ker(\bD_{\alpha},\fbD_{\alpha-1}^*,\bB_{\alpha}).
\end{aligned}
\label{eq:cohomology_in_the_proof}
\eeq
Here, the identity $\fbD_{\alpha} = \bD_{\alpha}$ on the kernel follows from the induction hypothesis \eqref{eq:relations_induction}, i.e., that $\fbD_\alpha = \bD_\alpha$ on $\scrN(\fbD_{\alpha-1}^*)$.
%%%%%%%%%%%%%%%%%%%%%%%%%%%%%%
\begin{proposition}
\label{prop:coinciding_kernels}
The kernel of the systems in \eqref{eq:corrected_D_N_adjointsII} coincide with $\module^{\alpha}_{\D}$.
\end{proposition}
%%%%%%%%%%%%%%%%%%%%%%%%

\begin{proof}
In the first direction, the inclusion:
\[
\begin{aligned}
\module_{\D}^{\alpha}\subseteq \ker(\fbD^*_{\alpha} \fbD_{\alpha},\fbD^*_{\alpha-1},\bB_{\alpha}),
\end{aligned}
\]
follow directly from comparing with \eqref{eq:cohomology_in_the_proof} above.  To prove the reverse inclusions, let $
\psi \in \ker(\fbD^*_{\alpha} \fbD_{\alpha},\fbD^*_{\alpha-1},\bB_{\alpha})$. In particular,
\[
\begin{aligned}
\psi \in \scrN(\fbD_{\alpha-1}^*) \cap \ker\bB_{\alpha} &&&\textand &&&&&\fbD_{\alpha} \psi \in \scrN(\fbD_{\alpha}^*).
\end{aligned}
\]

Comparing with the $L^2$-orthogonal decompositions in \eqref{eq:L2_decom}, which hold for any adapted Green operator, including $\fbD_{\alpha}$, we find that:
\[
\begin{aligned}
\fbD_{\alpha} \psi \in \scrN(\fbD_{\alpha}^*) \cap \scrR(\fbD_{\alpha}; \bB_{\alpha}) = \BRK{0}.
\end{aligned}
\]
This implies $\fbD_{\alpha} \psi = 0$, and to summarize $
\psi \in \ker(\fbD_{\alpha},\fbD_{\alpha-1}^*,\bB_{\alpha}) = \module^{\alpha}_{\D}$, 
which completes the proof.
\end{proof}
%%%%%%%%%%%%%%%%%%%%

Let $\frakI_{\alpha}\in\OP(-\infty,-\infty)$ denote the $L^2$-orthogonal projection onto the finite-dimensional kernel $\module_{\DD}^{\alpha}$ (every overdetermined elliptic problem admits such a projection onto its kernel; cf.\ \cite[pp.~26--27]{KL23}). Direct summing $\frakI_{\alpha}$ with the systems in either \eqref{eq:corrected_D_N_adjoints} or \eqref{eq:corrected_D_N_adjointsII} yields injective systems by \propref{prop:coinciding_kernels}. Therefore, by the properties of overdetermined systems in the calculus (cf.\ \cite[Prop.~2.9]{KL23}), this system admits a left inverse within the calculus, allowing us to obtain:  

%%%%%%%%%%%%%
\begin{proposition}
\label{prop:new_estimate1}
The following system is overdetermined elliptic and injective, (of maximal order $m_{\alpha}$ and class $m_{\alpha}$):
\beq
\begin{aligned}
\fbD_{\alpha} \oplus \bB_{\alpha} \oplus \tbP_{\alpha-1} \oplus \frakI_{\alpha}.
\end{aligned}
\label{eq:order_reduced_elliptiices_proof}
\eeq
Here, $\tbP_{\alpha-1}\in\OP(0,0)$ is the operator from the induction hypothesis, associated with the auxiliary decomposition \eqref{eq:aux_induction}. 
\end{proposition}
%%%%%%%%%%%%%
%%%%%%%%%%%%
\begin{proof}
By \cite[Prop.~2.9]{KL23}, if a system of varying orders has a left-inverse within the calculus, then it is overdetermined elliptic and injective.

With this given, we note that the auxiliary decomposition induced by $\fbD_{\alpha-1}$, assumed in the induction step \eqref{eq:aux_induction}, combined with the properties of $\tbP_{\alpha-1}$, implies that $(\id - \tbP_{\alpha-1})$ is the projection onto $\scrN(\fbD_{\alpha-1}^*)$. Thus, for every $\psi \in \tGamma(\bbE_{\alpha})$, we have:
\[
\fbD_{\alpha-1}^* \tbP_{\alpha-1} \psi = \fbD_{\alpha-1}^* \psi.
\]
We therefore obtain the identity:
\[
\fbD_{\alpha} \oplus \bB_{\alpha} \oplus \fbD_{\alpha-1}^* \oplus \frakI_{\alpha} 
= (\id \oplus \id \oplus \fbD_{\alpha-1}^* \oplus \id)
(\fbD_{\alpha} \oplus \bB_{\alpha} \oplus \tbP_{\alpha-1} \oplus \frakI_{\alpha}).
\]
By the overdetermined ellipticity established in \corrref{corr:overdetermined_ellipticity_corrected}, together with the identification of the system’s kernel in \propref{prop:coinciding_kernels}, the system on the left-hand side is injective and admits a left inverse within the calculus. Composing on the left with this left inverse, we obtain that the system $\fbD_{\alpha} \oplus \bB_{\alpha} \oplus \tbP_{\alpha-1} \oplus \frakI_{\alpha}$ also admits a left inverse in the calculus; hence it is overdetermined elliptic, and the claim follows.
\end{proof}
%%%%%%%%%%%%%%
\begin{proposition}
\label{prop:estimatefbD} 
The following estimate holds for every $s> m_{\alpha}-\tfrac{1}{2}$ and every $\psi \in W^{s,2}(\bbE_{\alpha}) \cap \ker \bB_{\alpha}$:
\beq
\|\psi\|_{s} \lesssim \|\fbD_{\alpha}\psi\|_{s-m_{\alpha}} + \|\tbP_{\alpha-1}\psi\|_{s} + \|\frakI_{\alpha}\psi\|_{0},
\label{eq:Ak_overdetermined_elliptic_estimate}
\eeq
\end{proposition}

\begin{proof}
The estimate follows directly from the a priori estimate associated with the overdetermined ellipticity in \eqref{eq:order_reduced_elliptiices_proof} (cf.\ \cite[Prop.~2.9]{KL23}), together with the observation that, if $\psi\in\ker \bB_{\alpha}$, then the $\bB_{\alpha}\psi$ term vanishes.
\end{proof}
%%%%%%%%%%
The next proposition is proven similarly to \propref{prop:new_estimate1}--\propref{prop:estimatefbD}, albeit using the second set of overdetermined ellipticities in \corrref{corr:overdetermined_ellipticity_corrected} (see the argument \cite[Prop.~4.3]{KL23} for the details):
%%%%%%%%%%%%%
\begin{proposition}
\label{prop:new_estimate2}
The following systems are overdetermined elliptic and injective (of maximal order $2m_{\alpha}$ and class $m_{\alpha}$):
\beq
\begin{aligned}
\fbD_{\alpha}^* \fbD_{\alpha} \oplus \bB_{\alpha}\oplus \tbP_{\alpha-1} \oplus \frakI_{\alpha}.
\end{aligned}
\label{eq:order_reduced_elliptiices_proof2}
\eeq 
Hence, for for every $s>-\tfrac{1}{2}+m_{\alpha}$ and $\psi \in W_2^{s}(\bbE_{\alpha}) \cap \ker\bB_{\alpha}$: 
\beq
\begin{aligned}
\|\psi\|_{s} &\lesssim \|\fbD_{\alpha}^*\fbD_{\alpha}\psi\|_{s-2m_{\alpha}} + \|\tbP_{\alpha-1}\psi\|_{s} + \|\frakI_{\alpha}\psi\|_{0},
\end{aligned}
\label{eq:D^*D_estimate}
\eeq
\end{proposition}
%%%%%%%%%%%%%
%%%%%%%%%%%%%%%%%%%%%%%%%%%%%%%%%%%%%%%%%%%%%%%%%%%%
\subsection{Stage 3: Closed range arguments and a priori estimates}
%%%%%%%%%%%%%%%%%%%%%%%%%%%%%%%%%%%%%
By construction, for every $s\geq 0$ we have
\[
\begin{aligned}
\module_{\D}^{\alpha} \subseteq \scrN^{s,2}(\fbD_{\alpha-1}^*).
\end{aligned}
\]

Thus, writing $\id = (\id - \frakI_{\alpha}) + \frakI_{\alpha}$ when restricted to the spaces on the right-hand side, we obtain the topologically direct splittings
\[
\begin{aligned}
\scrN^{s,2}(\fbD_{\alpha-1}^*) = \scrN_{2,\bot}^{s}(\fbD_{\alpha-1}^*) \oplus \module_{\D}^{\alpha}.
\end{aligned}
\]

These decompositions are, by construction, $L^2$-orthogonal for every $s\geq 0$. By the graded Fréchet structure, they therefore yield $L^2$-orthogonal, topologically direct decompositions of Fréchet spaces:
\[
\begin{aligned}
\scrN(\fbD_{\alpha-1}^*) = \scrN_{\bot}(\fbD_{\alpha-1}^*) \oplus \module_{\D}^{\alpha}.
\end{aligned}
\]

Combined with the auxiliary decomposition \eqref{eq:aux_induction} induced by $\fbD_{\alpha}$, we obtain the following $L^2$-orthogonal, topologically direct decompositions:
\beq
\begin{aligned}
\tGamma(\bbE_{\alpha}) = \scrR(\fbD_{\alpha-1}; \bB_{\alpha-1}) \oplus \scrN_{\bot}(\fbD_{\alpha-1}^*) \oplus \module_{\D}^{\alpha},
\end{aligned}
\label{eq:N0spdecomp}
\eeq
and the projection onto the middle summand is given by
\[
\tbP_{\alpha-1}^\bot = (\id - \frakI_{\alpha})(\id - \tbP_{\alpha-1}).
\]
By the composition rules of Green operators, $\tbP_{\alpha-1}^\bot \in \OP(0,0)$. Since all projections onto the closed subspaces in \eqref{eq:N0spdecomp} lie in $\OP(0,0)$, it follows by a density and continuity argument that, for every $s\geq 0$,
\beq
\begin{aligned}
W^{s,2}(\bbE_{\alpha}) = \scrR^{s,2}(\fbD_{\alpha-1}; \bB_{\alpha-1}) \oplus \scrN_{2,\bot}^{s}(\fbD_{\alpha-1}^*) \oplus \module^{\alpha}_{\D},
\end{aligned}
\label{eq:WspN0}
\eeq
and, for every $s\geq 0$ as above,
\beq 
\begin{aligned}
\scrN(\fbD_{\alpha-1}^*) \hookrightarrow \scrN^{s,2}(\fbD_{\alpha-1}^*).
\end{aligned}
\label{eq:first_inclusions} 
\eeq 
%%%%%%%%%%%%%%%%%%%
\begin{lemma}
\label{lem:density_Nsp}
For every $s\geq 0$, the continuous inclusions
\[
\begin{aligned}
\scrN_{\bot}(\fbD_{\alpha-1}^*) \hookrightarrow \scrN^{s,2}_{\bot}(\fbD_{\alpha-1}^*)
\end{aligned}
\]
are dense. Moreover, if $\psi \in W^{s,2}(\bbE_{\alpha}) \cap \ker\bB_{\alpha}$, then $\tbP_{\alpha-1}^{\bot} \psi \in W^{s,2}(\bbE_{\alpha}) \cap \ker\bB_{\alpha}$. Hence, there is an additional dense inclusion:
\[
\scrN_{\bot}(\fbD_{\alpha-1}^*) \cap \ker\bB_{\alpha} \hookrightarrow \scrN^{s,2}_{\bot}(\fbD_{\alpha-1}^*) \cap \ker\bB_{\alpha}.
\]
\end{lemma}
%%%%%%%%%%%%%%%%%%%%%%%%%
\begin{proof}
The required inclusions are obtained by applying the projection $\id-\frakI_{\alpha}$ to both sides of \eqref{eq:first_inclusions}. For density, let $\psi\in \scrN^{s,2}_{\bot}(\fbD_{\alpha-1}^*)$. Since $\tGamma(\bbE_{\alpha})$ is dense in $W^{s,2}(\bbE_{\alpha})$, there exists a sequence $\psi_{n}\in \tGamma(\bbE_{\alpha})$ such that
\[
\psi_{n}\to \psi \qquad \text{in } W^{s,2}.
\]
Since $\tbP_{\alpha-1}^{\bot}\in\OP(0,0)$, it is $W^{s,2}\to W^{s,2}$ continuous for every $s\geq 0$. Moreover, by construction, the map
\[
\tbP_{\alpha-1}^{\bot}:W^{s,2}(\bbE_{\alpha})\to W^{s,2}(\bbE_{\alpha})
\]
is the continuous projection onto the closed subspace $\scrN^{s,2}_{\bot}(\fbD_{\alpha-1}^*)$, hence $\tbP_{\alpha-1}^{\bot}\psi=\psi$. Therefore, by continuity,
\[
\scrN_{\bot}(\fbD_{\alpha-1}^*) \ni \tbP_{\alpha-1}^{\bot}\psi_n \to \tbP_{\alpha-1}^{\bot}\psi=\psi,
\]
which proves the first density.

For the second density, since $\module_{\D}^{\alpha}\subseteq \ker \bB_{\alpha}$, the refined decomposition in \eqref{eq:aux_induction_Refined} implies that, when restricted to $\ker \bB_{\alpha}$, the operator $\tbP^{\bot}_{\alpha-1}$ becomes the projection onto
\[
(\scrN_{\bot}(\fbD_{\alpha-1}^*)\cap\ker\bB_{\alpha}) \oplus \module_{\D}^{\alpha}.
\]
The corresponding Sobolev version follows by continuity, and the density argument proceeds exactly as above.
\end{proof}
%%%%%%%%%%%%%%%%%%%%%%%%%
%%%%%%%%%%%%%%%%%%%%
\begin{proposition}
\label{prop:Ak_closed_range}
For every $s\geq 0$, the subspace $\scrR^{s,2}(\fbD_{\alpha}; \bB_{\alpha}) \subseteq W^{s,2}(\bbE_{\alpha+1})$ is closed. Moreover, for every $\psi \in \scrN^{s+m_{\alpha},2}_{\bot}(\fbD_{\alpha-1}^*) \cap \ker\bB_{\alpha}$, the following estimate holds:
\beq
\|\psi\|_{s+m_{\alpha}} \lesssim \|\fbD_{\alpha} \psi\|_{s}.
\label{eq:estimateNsp}
\eeq
\end{proposition}
%%%%%%%%%%%%%%%%%%%%

%%%%%%%%%%%%%%%%%%%%
\begin{proof}
Combining the dense inclusion in \eqref{eq:first_inclusions} with the decomposition \eqref{eq:WspN0}, we find that \eqref{eq:aux_induction_Refined} completes into the Sobolev version:
\[
W^{s+m_{\alpha},2}(\bbE_{\alpha}) \cap \ker\bB_{\alpha}
=
\scrR^{s+m_{\alpha},2}(\fbD_{\alpha-1}; \bB_{\alpha-1})
\oplus
(\scrN^{s+m_{\alpha},2}_{\bot}(\fbD_{\alpha-1}^*) \cap \ker\bB_{\alpha})
\oplus
\module_{\D}^{\alpha}.
\]
Using the relations in the induction hypothesis \eqref{eq:containments_induction} and continuity, we obtain that $\fbD_{\alpha}$ vanishes identically on the first and third summands of this decomposition. Hence:
\beq
\begin{aligned}
\fbD_{\alpha}(W^{s+m_{\alpha},2}(\bbE_{\alpha}) \cap \ker\bB_{\alpha})
=
\fbD_{\alpha}(\scrN^{s+m_{\alpha},2}_{\bot}(\fbD_{\alpha-1}^*) \cap \ker\bB_{\alpha}).
\end{aligned}
\label{eq:ranges_in_the_proof}
\eeq

Now, for any $\psi \in \scrN^{s+m_{\alpha},2}_{\bot}(\fbD_{\alpha-1}^*) \cap \ker\bB_{\alpha}$, we have by construction that $\tbP_{\alpha-1}\psi=0$, $\frakI_{\alpha}\psi=0$, and $\bB_{\alpha}\psi=0$, hence the elliptic estimate \eqref{eq:Ak_overdetermined_elliptic_estimate} reduces to \eqref{eq:estimateNsp}. This implies that the ranges on the right-hand side in \eqref{eq:ranges_in_the_proof} are closed subspaces (e.g., by \cite[Prop.~6.7, p.~583]{Tay11a}). Comparing with the subspaces on the left-hand side of \eqref{eq:ranges_in_the_proof}, we conclude that the ranges in the statement of the proposition are indeed closed.
\end{proof}
%%%%%%%%%%%%%%%%%%%%

Since the proposition holds for any $s\geq 0$, by the Sobolev grading we have:
%%%%%%%%%%%%%%%%%%%%%%%%%
\begin{corollary}
$\scrR(\fbD_{\alpha}; \bB_{\alpha}) \subseteq \tGamma(\bbE_{\alpha+1})$ is closed in the Fréchet topology. 
\end{corollary}
The next proposition is the analytical heart of the induction step. It is an adaptation of \cite[Prop.~4.7--4.8]{KL23} to the Dirichlet setting. We remark that although  \cite[Prop.~4.7--4.8]{KL23} there are valid as stated, the analytical argument in their proof is flawed: distributional limits were incorrectly identified with weak $L^2$ limits. Here we remedy this by a more careful treatment; see \cite[Prop.~3.44]{Led25B} for the argument in its highest generality.
%%%%%%%%%%%%%%%%%%%%%%%%%%%%%%%
\begin{proposition}
\label{prop:AprioriAstarAk} 
Let $\theta \in \scrR^{0,2}(\fbD_{\alpha}; \bB_{\alpha})$. Let $s\geq 0$, and suppose there exists $\xi \in W^{s,2}(\bbE_{\alpha+1})$ such that, 
\[
\begin{aligned}
\theta - \xi \in \scrN^{0,2}(\fbD_{\alpha}^*).
\end{aligned}
\]
Then there exists a
\beq
\begin{aligned}
\psi \in \scrN^{s+m_{\alpha},2}_{\bot}(\fbD_{\alpha-1}^*) \cap \ker\bB_{\alpha},
\end{aligned}
\label{eq:potential_proof}
\eeq
such that $\theta = \fbD_{\alpha} \psi$ and the following estimates hold:
\beq
\begin{aligned}
\|\psi\|_{s+m_{\alpha}} \lesssim \|\fbD_{\alpha}^* \fbD_{\alpha} \psi\|_{s-m_{\alpha}}.
\end{aligned}
\label{eq:estimateN0p2}
\eeq
\end{proposition}
%%%%%%%%%%%%%%%%%%%%%%%%%%%%%%%%%%%%
\begin{proof}
By the assumption on $\theta$ and the fact that the smooth version of \eqref{eq:ranges_in_the_proof} reads as
\[
\begin{aligned}
\scrR(\fbD_{\alpha}; \bB_{\alpha}) = \fbD_{\alpha}(\scrN_{\bot}(\fbD_{\alpha-1}^*) \cap \ker\bB_{\alpha}),
\end{aligned}
\]
together with the density inclusions in \lemref{lem:density_Nsp}, we find that there exists a sequence $(\psi_n)$ of smooth sections in
\beq
\begin{aligned}
(\psi_n) \subset \scrN_{\bot}(\fbD_{\alpha-1}^*) \cap \ker\bB_{\alpha},
\end{aligned}
\label{eq:approximating_sequence_prop_proof}
\eeq
such that $\fbD_{\alpha} \psi_n \to \theta$ in $L^{2}$. Using \eqref{eq:Ak_overdetermined_elliptic_estimate} for $s=m_{\alpha}$, we may assume that $\psi_n \to \psi$ in $L^{2}$ and that $\theta = \fbD_{\alpha} \psi$.

Let now $\frakS$ be the left inverse of the injective system from \propref{prop:new_estimate2}. By the construction of the approximating sequence $(\psi_{n})$ above, we have
\beq 
\begin{aligned} 
\frakS(\fbD_{\alpha}^* \fbD_{\alpha} \psi_n, 0, 0, 0) = \psi_{n}.
\end{aligned} 
\label{eq:inverse_analytical_lemma}
\eeq 

Restricting this inverse to the relevant arguments gives the Green operator
\[
\begin{aligned} 
\frakS(\cdot,0;0,0): \tGamma(\bbE_{\alpha}) \to \tGamma(\bbE_{\alpha}).
\end{aligned} 
\]
By the composition rules of the calculus (cf.~\cite[Thm.~2.3]{KL23}), $\fbD_{\alpha}^* \fbD_{\alpha}$ is of order $2m_{\alpha}$ and class $m_{\alpha}$. Hence, by the properties of the left inverse (\cite[Prop.~2.9]{KL23}), the operator $\frakS(\cdot,0;0,0)$ is of class at most $-m_{\alpha}$. In particular, for sufficiently small $s' < 0$ we have the continuous mapping property
\[
\begin{aligned} 
\frakS(\cdot,0;0,0): W^{-m_{\alpha},2}(\bbE_{\alpha}) \to W^{s',2}(\bbE_{\alpha}).
\end{aligned} 
\]
We remark that this map may well be compact; it guarantees continuity between Sobolev norms but does not imply mapping into a space sufficient to invert the original problem.

Now, since $\fbD_{\alpha}^* \fbD_{\alpha}$ is of order $2m_{\alpha}$ and class $m_{\alpha}$, we have
\[
\fbD_{\alpha}^* \fbD_{\alpha} : W^{m_{\alpha},2}(\bbE_{\alpha}) \rightarrow W^{-m_{\alpha},2}(\bbE_{\alpha})
\]
continuously. Therefore, by continuity 
\[
\lim_{n \to \infty} \fbD_{\alpha}^* \fbD_{\alpha} \psi_n = \fbD_{\alpha}^* \xi, \quad \text{in } W^{-m_{\alpha},2}.
\]
On the other hand, by the continuity of $\frakS$ as above, we obtain in $W^{s',2}$
\[
\begin{aligned} 
\lim_{n \to \infty} \frakS(\fbD_{\alpha}^* \fbD_{\alpha} \psi_n, 0, 0, 0) 
= \frakS(\fbD_{\alpha}^* \xi, 0, 0, 0).
\end{aligned} 
\]

Comparing with \eqref{eq:inverse_analytical_lemma} and using uniqueness of limits (since $L^{2}$ convergence is stronger than $W^{s',2}$ convergence for sufficiently small $s'$) gives
\[
\begin{aligned} 
\frakS(\fbD_{\alpha}^* \xi, 0, 0, 0) = \psi.
\end{aligned} 
\]

Finally, since $\fbD_{\alpha}^* \xi \in W^{s-m_{\alpha},2}$ by assumption and the fact that the left inverse $\frakS$ is of order $-2m_{\alpha}$, we obtain $\psi \in W^{s+m_{\alpha},2}$ and, in particular, that it satisfies \eqref{eq:potential_proof}. The estimates in \eqref{eq:D^*D_estimate} then follow, yielding \eqref{eq:estimateN0p2} and thus completing the proof.
\end{proof}
%%%%%%%%%%%%%%%%%%%%%%%%%%%%%%%%%%%%

%%%%%%%%%%%%%%%%%%%%%%%%%%%%%%%%
\subsection{Stage 4: The auxiliary decomposition}
%%%%%%%%%%%%%%%%%%%%%%%%
We now apply \eqref{eq:L2_decom} to the adapted Green operator $\fbD_{\alpha}$. Since $\fbD_{\alpha}$ has closed range, the right decomposition there translate directly into
\beq
\begin{aligned}
L^{2}(\bbE_{\alpha+1}) = \scrR^{0,2}(\fbD_{\alpha}; \bB_{\alpha}) \oplus \scrN^{0,2}(\fbD_{\alpha}^*).
\end{aligned}
\label{eq:L2_splitting_proof}
\eeq

%%%%%%%%%%%%%%%%%%%%%%%%%%%%
\begin{proposition}
\label{prop:aux_first_step}
There exists an $L^{2}$-orthogonal, topologically direct decomposition of Fréchet spaces
\beq
\begin{aligned}
\tGamma(\bbE_{\alpha+1}) = \scrR(\fbD_{\alpha}; \bB_{\alpha}) \oplus \scrN(\fbD^*_{\alpha}).
\end{aligned}
\label{eq:aux_decompositionAksmooth}
\eeq
Moreover, the continuous projection onto $\scrR(\fbD_{\alpha}; \bB_{\alpha})$ associated with this decomposition extends continuously to the $L^{2}$-orthogonal projection onto $\scrR^{0,2}(\fbD_{\alpha};\bB_{\alpha})$ in \eqref{eq:L2_splitting_proof}.
\end{proposition}
%%%%%%%%%%%%%%%%%%%%%%%%%%%

\begin{proof}
On the one hand, $\scrR^{s,2}(\fbD_{\alpha}; \bB_{\alpha}) \subseteq W^{s,2}(\bbE_{\alpha+1})$ is closed due to \propref{prop:Ak_closed_range}. On the other hand,  $\scrN^{s,2}(\fbD^*_{\alpha})\subseteq W^{s,2}(\bbE_{\alpha+1})$ is closed as it is the kernel of a continuous linear map ($\fbD^*_{\alpha}$ has class zero so it is continuous on $W^{s,2}$ for every $s\geq 0$). Hence, together with the containments:
\beq
\begin{aligned}
\scrR^{s,2}(\fbD_{\alpha}; \bB_{\alpha}) \subseteq \scrR^{0,2}(\fbD_{\alpha}; \bB_{\alpha}), &&&&\scrN^{s,2}(\fbD_{\alpha}^*) \subseteq \scrN^{0,2}(\fbD_{\alpha}^*),
\end{aligned}
\eeq
one finds that these subspaces are closed, intersect trivially, and are mutually $L^{2}$-orthogonal. Thus, to prove:
\beq
\begin{aligned}
W^{s,2}(\bbE_{\alpha+1}) = \scrR^{s,2}(\fbD_{\alpha}; \bB_{\alpha}) \oplus \scrN^{s,2}(\fbD_{\alpha}^*),
\end{aligned}
\label{eq:this_holds}
\eeq
it remains to show that the sum of the spaces on the right exhausts the whole of $W^{s,2}(\bbE_{\alpha+1})$. By the Sobolev grading of the Fréchet space $\tGamma(\bbE_{\alpha+1})$, if \eqref{eq:this_holds} holds for every $s\geq 0$, then \eqref{eq:aux_decompositionAksmooth} holds as well.

To prove this exhaustion, let $\xi \in W^{s,2}(\bbE_{\alpha+1})$. Decompose it as an element in $L^{2}(\bbE_{\alpha+1})$ according to \eqref{eq:L2_splitting_proof}:
\[
\xi = \theta + \phi,
\]
where $\theta \in \scrR^{0,2}(\fbD_{\alpha}; \bB_{\alpha})$ and $\phi \in \scrN^{0,2}(\fbD_{\alpha}^*)$. 
Since $\xi \in W^{s,2}(\bbE_{\alpha+1})$, and $\phi = \xi - \theta\in \scrN^{0,2}(\fbD_{\alpha}^*)$, \propref{prop:AprioriAstarAk} applies, yielding $\theta = \fbD_{\alpha} \psi$ for some $\psi \in W^{s+m_{\alpha},2}(\bbE_{\alpha})$ with $\bB_{\alpha} \psi = 0$. Therefore:
\[
\begin{aligned}
\phi =\xi-\fbD_{\alpha}\psi\in \scrN^{0,2}(\fbD_{\alpha}^*) \cap W^{s,2}(\bbE_{\alpha+1}) = \scrN^{s,2}(\fbD_{\alpha}^*).
\end{aligned}
\]
This completes the proof. The $L^{2}$-continuity of the projections follows directly from this construction.
\end{proof}

\begin{theorem}
\label{thm:aux_induction_step}
$\fbD_{\alpha}$ induces an auxiliary decomposition as in \eqref{eq:aux_induction_step}. 
\end{theorem}
%%%%%%%%%%%%%%%%%%%%

%%%%%%%%%%%%%%%%%%%%%% 
\begin{proof}
By surveying the requirements for the induction step \eqref{eq:aux_induction_step} and comparing them with the results of \propref{prop:aux_first_step}, it remains to show the existence of a continuous linear map
\[
\bP_{\alpha}: \tGamma(\bbE_{\alpha+1}) \to \tGamma(\bbE_{\alpha})
\]
which is an element of $\OP(-m_{\alpha},0)$ such that
\[
\tbP_{\alpha} := \fbD_{\alpha} \bP_{\alpha} \in \OP(0,0)
\]
is the continuous projection onto $\scrR(\fbD_{\alpha}; \bB_{\alpha}) \subseteq \tGamma(\bbE_{\alpha+1})$ in the decomposition \eqref{eq:aux_decompositionAksmooth}.

We begin by noting that the topologically direct decomposition \eqref{eq:aux_decompositionAksmooth} already provides a projection onto this range,
\[
\tbP_{\alpha}: \tGamma(\bbE_{\alpha+1}) \to \tGamma(\bbE_{\alpha+1}).
\]
Since the decomposition is topologically direct, this map is continuous in the Fréchet topology, though it is not yet known to belong to the calculus. By \propref{prop:aux_first_step}, however, this projection does extend to the $L^{2}$-orthogonal projection into $\scrR^{0,2}(\fbD_{\alpha};\bB_{\alpha})$: 
\beq 
\tbP_{\alpha}: L^{2}(\bbE_{\alpha+1}) \to L^{2}(\bbE_{\alpha+1}).
\label{eq:tbP_aux_proof} 
\eeq 

With this in mind, using the containment relations \eqref{eq:containments_induction} from the induction hypothesis and the decompositions \eqref{eq:N0spdecomp} from the previous level, we find that for every $s\geq 0$
\[
\begin{aligned}
\scrR^{s,2}(\fbD_{\alpha}; \bB_{\alpha}) 
= \fbD_{\alpha}(\scrN^{s+m_{\alpha},2}_{\bot}(\fbD_{\alpha-1}^*) \cap \ker \bB_{\alpha}).
\end{aligned}
\]

Together with the estimate \eqref{eq:estimateNsp} at each Sobolev level, this shows that $\fbD_{\alpha}$ restricts to a bijection onto the above closed subspaces. By the open mapping theorem, we obtain isomorphisms of Banach spaces with continuous inverses:
\[
\begin{aligned}
(\fbD_{\alpha})^{-1}: \scrR^{s,2}(\fbD_{\alpha}; \bB_{\alpha}) 
\to \scrN^{s+m_{\alpha},2}_{\bot}(\fbD_{\alpha-1}^*) \cap \ker\bB_{\alpha}.
\end{aligned}
\]
By the graded Fréchet structure and the validity of this statement for every $s\geq 0$, we therefore obtain isomorphisms of Fréchet spaces
\[
\begin{aligned}
(\fbD_{\alpha})^{-1}: \scrR(\fbD_{\alpha}; \bB_{\alpha}) 
\to \scrN_{\bot}(\fbD_{\alpha-1}^*) \cap \ker\bB_{\alpha}.
\end{aligned}
\]
This allows us to define a continuous linear map $\bP_{\alpha}:\tGamma(\bbE_{\alpha+1})\rightarrow \tGamma(\bbE_{\alpha})$ by
\beq 
\bP_{\alpha} := (\fbD_{\alpha})^{-1} \tbP_{\alpha}.
\label{eq:bP_def_proof}
\eeq 
This is well defined because the range of $\tbP_{\alpha}$ lies in the domain of $(\fbD_{\alpha})^{-1}$ by \lemref{lem:density_Nsp}. As a composition of continuous maps, $\bP_{\alpha}$ is continuous, and the relation $\tbP_{\alpha} = \fbD_{\alpha} \bP_{\alpha}$ holds by construction.

In particular, from
\[
\begin{aligned}
(\fbD_{\alpha})^{-1}: \scrR_{2}^{0}(\fbD_{\alpha}; \bB_{\alpha}) 
\to \scrN^{m_{\alpha},2}_{\bot}(\fbD_{\alpha-1}^*) \cap \ker\bB_{\alpha},
\end{aligned}
\]
and \eqref{eq:tbP_aux_proof} combined with \eqref{eq:bP_def_proof}, it follows by composition that
\[
\bP_{\alpha}: L^{2}(\bbE_{\alpha+1}) \to W^{m_{\alpha},2}(\bbE_{\alpha})
\]
is continuous. Thus, if we prove that $\bP_{\alpha}$ belongs to the calculus, it must lie in $\OP(-m_{\alpha},0)$ by the restrictive nature of continuous mapping properties in the calculus (\cite[Prop.~2.5]{KL23}). We therefore proceed to show that $\bP_{\alpha}$ belongs to the calculus.

By the decomposition \eqref{eq:aux_decompositionAksmooth} and the fact that $\tbP_{\alpha} = \fbD_{\alpha} \bP_{\alpha}$ is the projection onto the corresponding range, we obtain
\[
\fbD_{\alpha}^* \fbD_{\alpha} \bP_{\alpha} = \fbD_{\alpha}^*.
\]

On the other hand, since by construction $\bP_{\alpha}$ takes its values in $\scrN_{\bot}(\fbD_{\alpha-1}^*) \cap \ker \bB_{\alpha}$, 
it follows that
\[
\tbP_{\alpha-1} \bP_{\alpha} = 0, 
\qquad 
\frakI_{\alpha} \bP_{\alpha} = 0, 
\qquad 
\bB_{\alpha} \bP_{\alpha} = 0.
\]
Summarizing,
\beq
\begin{aligned}
(\fbD_{\alpha}^* \fbD_{\alpha} \oplus \bB_{\alpha} \oplus \tbP_{\alpha-1} \oplus \frakI_{\alpha}) \bP_{\alpha}
= \fbD_{\alpha}^* \oplus 0 \oplus 0 \oplus 0.
\end{aligned}
\label{eq:defining_relationGa}
\eeq

By \propref{prop:new_estimate2}, the system
\[
\begin{aligned}
\fbD_{\alpha}^* \fbD_{\alpha} \oplus \bB_{\alpha} \oplus \tbP_{\alpha-1} \oplus \frakI_{\alpha}
\end{aligned}
\]
is overdetermined elliptic and injective. Hence its associated left inverse, denoted by $\frakS$, yields
\beq
\begin{aligned}
\bP_{\alpha} = \frakS(\fbD_{\alpha}^* \oplus 0 \oplus 0 \oplus 0),
\end{aligned}
\label{eq:G_def}
\eeq
which proves that $\bP_{\alpha}$ belongs to the calculus, as it is obtained by composition of operators in the calculus. Consequently, by composition, $\tbP_{\alpha} = \fbD_{\alpha} \bP_{\alpha}$ also belongs to the calculus, and is an element of $\OP(0,0)$ as it is $L^{2}\rightarrow L^2$ continuous due to the continuous mapping property \eqref{eq:tbP_aux_proof}.
\end{proof}
%%%%%%%%%%%%%%%%%%%%%%%%%%%%%%%%%%%%%%
The decompositions \eqref{eq:aux_induction_Refined} required in the induction step will then follow by restriction, once we prove that $\bB_{\alpha+1} \fbD_{\alpha} = 0$ on $\ker \bB_{\alpha}$ in the next section.
\subsection{Stage 5: Completion of the induction step} 
%%%%%%%%%%%%%%%%%%%%%%%%%%%%%%%%=
By surveying what has been proven thus far, we see that the induction step is completed by establishing the existence and uniqueness of $\fbD_{\alpha+1}$ satisfying the requirements in \eqref{eq:contain_induction_step}, \eqref{eq:contain_induction_stepM}, \eqref{eq:relation_induction_step}, and \eqref{eq:correction_induction_step}. 

We begin this stage by using the auxiliary decompositions already established to define the continuous linear map of Fréchet spaces by setting
\beq
\fbD_{\alpha+1} : \tGamma(\bbE_{\alpha+1}) \to \tGamma(\bbE_{\alpha+2}), \qquad \fbD_{\alpha+1} := \bD_{\alpha+1} - \bD_{\alpha+1} \tbP_{\alpha}.
\label{eq:D_def_induction_step}
\eeq
We show this construction satisfies the required properties in \thmref{thm:lifted_complex}: 
%%%%%%%%%%%%%%
\begin{proposition}
\label{prop:extra_boundary_condition} 
One has the relation
\[
\bB_{\alpha+1}(\scrR(\fbD_{\alpha}; \bB_{\alpha})) = 0,
\]
and the definition \eqref{eq:D_def_induction_step} yields the relations \eqref{eq:contain_induction_step}, \eqref{eq:contain_induction_stepM}, \eqref{eq:relation_induction_step} required in the induction step. It is the unique continuous map of Fréchet spaces with these properties.
\end{proposition}
%%%%%%%%%%%%%%%%%%%
\begin{proof} 
By the induction hypothesis, the refined decomposition \eqref{eq:aux_induction_Refined} holds for $\alpha - 1$. Hence, since $\fbD_{\alpha} = 0$ on $\scrR(\fbD_{\alpha-1}; \bB_{\alpha-1})$, we obtain
\[
\scrR(\fbD_{\alpha}; \bB_{\alpha}) 
= \fbD_{\alpha}(\scrN(\fbD_{\alpha-1}^*) \cap \ker \bB_{\alpha}).
\]
Moreover, by the induction hypothesis, $\fbD_{\alpha} = \bD_{\alpha}$ on $\scrN(\fbD_{\alpha-1}^*)$, and in particular on $\scrN(\fbD_{\alpha-1}^*) \cap \ker \bB_{\alpha}$. Thus,
\[
\scrR(\fbD_{\alpha}; \bB_{\alpha}) 
= \bD_{\alpha}(\scrN(\fbD_{\alpha-1}^*) \cap \ker\bB_{\alpha}).
\]
Since $\bB_{\alpha+1} \bD_{\alpha} = 0$ on $\ker \bB_{\alpha}$ by the order reduction property in the definition of an elliptic pre-complex (\defref{def:elliptic_pre_complex}), the first claim follows.

For the second part of the claim, we note that by construction $\tbP_{\alpha} = \id$ on $\scrR(\fbD_{\alpha}; \bB_{\alpha})$. Hence, on this space, $\fbD_{\alpha+1} = 0$ by its definition in \eqref{eq:D_def_induction_step}, so \eqref{eq:contain_induction_step} holds. Since $\tbP_{\alpha}$ vanishes on $\scrN(\fbD_{\alpha}^*)$, the relation in \eqref{eq:relation_induction_step} also holds, as implied by the auxiliary decomposition \eqref{eq:aux_decompositionAksmooth}. By construction, it is the unique continuous map with these properties, since the decomposition on which it is defined is topologically direct. 
\end{proof}
%%%%%%%%%%%

It remains to verify the requirements in \eqref{eq:correction_induction_step} and that $\bC_{\alpha+1} \in \OP(0,0)$. The definition of $\fbD_{\alpha+1}$ in \eqref{eq:D_def_induction_step} and the representation $\tbP_{\alpha} = \fbD_{\alpha} \bP_{\alpha}$ yields
\[
\begin{aligned} 
\bC_{\alpha+1} &= \fbD_{\alpha+1} - \bD_{\alpha+1} = -\bD_{\alpha+1} \fbD_{\alpha} \bP_{\alpha} = -\bD_{\alpha+1} \bD_{\alpha} \bP_{\alpha},
\end{aligned} 
\]
where we used our construction of $\bP_{\alpha}$ in \thmref{thm:aux_induction_step}; it takes values in $\scrN(\fbD_{\alpha-1}^*) \cap \ker \bB_{\alpha}$, where $\fbD_{\alpha} = \bD_{\alpha}$ by the induction hypothesis and the order-reduction properties in \defref{def:elliptic_pre_complex}. The fact that $\bC_{\alpha+1}\in\OP(0,0)$ then follows from the composition rules and the fact that $\bP_{\alpha}\in\OP(-m_{\alpha},0)$, together with the order reduction property stated in \defref{def:elliptic_pre_complex}:
\[
\ord(A_{\alpha+1}A_{\alpha})\leq\ord(A_{\alpha})= m_{\alpha}
\]

%
%Similarly, by the induction hypothesis \eqref{eq:correction_induction} on $\bC_{\alpha}$:
%\[
%\bD_{\alpha+1}\bC_{\alpha}\bP_{\alpha} = -\bD_{\alpha+1}\bD_{\alpha} \underbrace{\bD_{\alpha-1}\bP_{\alpha-1}}_{\in \OP(0,0)} \bP_{\alpha} - \bD_{\alpha+1}\bD_{\alpha} \underbrace{\bC_{\alpha-1}\bP_{\alpha-1}}_{\in \OP(0,0)} \bP_{\alpha}.
%\]
%Here, $\bC_{\alpha-1}\bP_{\alpha-1} \in \OP(0,0)$ since both $\bC_{\alpha-1}$ and $\bP_{\alpha-1}$ belong to $\OP(0,0)$. Similarly, $\bD_{\alpha-1}\bP_{\alpha-1} \in \OP(0,0)$ because $\fbD_{\alpha-1}\bP_{\alpha-1} \in \OP(0,0)$ (as $\bP_{\alpha-1}$ is a balance for $\fbD_{\alpha-1}$) and $\fbD_{\alpha-1} - \bD_{\alpha-1} \in \OP(0,0)$.
%
%For the first term, $\bD_{\alpha-1}\bP_{\alpha-1}\bP_{\alpha}$ remains a balance for $\bD_{\alpha}$ since composing with an element in $\OP(0,0)$ does not alter its balancing properties. Thus, using the properties of $\bD_{\alpha+1}\bD_{\alpha}$ in \defref{def:NN_segment}---\defref{def:DD_segment}, it follows that:
%\[
%\bD_{\alpha+1}\bD_{\alpha}\bD_{\alpha-1}\bP_{\alpha-1}\bP_{\alpha} \in \OP(0,0).
%\]
%For the second term, a similar argument yields:
%\[
%\bD_{\alpha+1}\bD_{\alpha}\bC_{\alpha-1}\bP_{\alpha-1}\bP_{\alpha} \in \OP(0,0).
%\]
%
%Overall, $\bD_{\alpha+1}\bC_{\alpha}\bP_{\alpha} \in \OP(0,0)$, and hence $\bC_{\alpha+1} \in \OP(0,0)$, as required.
%%%%%%%%%%%%%%%%%%%%%%%%%%%%%%%%%%%%%%%%%%%%%%%%%%%%%%
%%%%%%%%%%%%%%%%%%%%%%%%%%%%%%%%%%%%%%%%%%%%%%%%%%%%%%
\subsection{The Hodge decompositions}
%%%%%%%%%%%%%%%%%%%%%
\label{app:Hodge_proof}
At this stage, auxiliary decompositions have been established for all levels $\alpha\leq \alpha_0$ of the elliptic pre-complex, and the relevant systems $\fbD_{\alpha}$ have been defined with the properties stated in the induction hypothesis in \secref{sec:stage1}.

Using this collective structure, we now prove \thmref{thm:Hodge_decomposition}, in the notation presented in \eqref{eq:Hodge_smooth_proof}, with the exception of the refined identities for the cohomologies \eqref{eq:cohomology_expression}, which—due to their significance and the distinct method of proof—are deferred to the final subsection.

By comparing the required decompositions in \eqref{eq:Hodge_smooth_proof} with the refined auxiliary decompositions \eqref{eq:N0spdecomp} already established for all levels through the induction steps, we find that it suffices to establish the following:
%%%%%%%%%%%%%
\begin{proposition}
\label{prop:Hodge_proof}
For all $\alpha< \alpha_0$, the following holds:
\[
\begin{aligned}
\scrN_{\bot}(\fbD^*_{\alpha})=\scrR(\fbD^*_{\alpha+1}).
\end{aligned}
\]
\end{proposition}
%%%%%%%%%%%%%
We prove this in several stages, following essentially the same lines of the proof in \cite[Sec.~4.3]{KL23}.
%%%%%%%%%%%%%%%%%%%%%%%%%%%
\begin{proposition}
\label{prop:correction_dual_containment}
For all $\alpha< \alpha_0$, the following holds:
\[
\begin{aligned}
\scrR(\fbD^*_{\alpha+1})\subseteq\scrN_{\bot}(\fbD^*_{\alpha}).
\end{aligned}
\]
\end{proposition}
%%%%%%%%%%%%%%%%%%%%%%
\begin{proof}
This is obtained directly by dualizing the relation \eqref{eq:relation_induction_step} with respect to the $L^2$ inner product together with the relation $\scrR(\bD_{\alpha};\bB_{\alpha})\subset\scrN(\fbD_{\alpha+1},\bB_{\alpha+1})$ and $\scrR(\fbD^*_{\alpha+1})\,\bot\,\module^{\alpha+1}_{\D}$.  
\end{proof}
%%%%%%%%%%%%%%%%%%%%%%%%%%%
For the next proposition, recall that by the assumptions on an adapted Green operator, $\fbD^*_{\alpha+1}(\fbD_{\alpha+1}$ has order $2m_{\alpha+1}$ and class $m_{\alpha+1}$ due to the compostion rules of the calculus (cf.~\cite[Thm.~2.3]{KL23}). 
%%%%%%%%%%%%%%%%%
\begin{proposition}
\label{prop:correction_final_dual_new}
For all $\alpha< \alpha_0$, the subspace $\scrR^{s,2}(\fbD^*_{\alpha+1})\subseteq W^{s,2}(\bbE_{\alpha+1})$ is closed for every $s\geq 0$, and the following identity holds:
\beq 
\begin{aligned}
\scrR^{s,2}(\fbD^*_{\alpha+1}) 
= \scrR^{0,2}(\fbD^*_{\alpha+1}) 
\cap W^{s,2}(\bbE_{\alpha+1}).
\end{aligned}
\label{eq:s_identity_proof_adjoint} 
\eeq 
\end{proposition}
%%%%%%%%%%%%%%%%%%%%%%%%%%%
\begin{proof}
Consider the subspace of $W^{s,2}(\bbE_{\alpha+1})$
\beq
\begin{aligned}
\{\fbD_{\alpha+1}^*\theta : \theta \in W^{s+m_{\alpha+1},2}(\bbE_{\alpha+2})\}.
\end{aligned}
\label{eq:Dstar_range_proof}
\eeq
Iterating the decompositions in \eqref{eq:aux_induction_Refined} and \eqref{eq:N0spdecomp} for $\alpha+1$ and $\alpha$ (this is why we require $\alpha<\alpha_0$, to ensure that these decompositions are available), and using the defining relations of $\fbD_{\alpha}$ together with the dualized versions for $\fbD_{\alpha}^*$, we find that this space is equal to (see also \cite[Prop.~4.12]{KL23})
\[
\begin{aligned}
\{\fbD_{\alpha+1}^*\fbD_{\alpha+1}\psi : \psi \in \scrN^{s+2m_{\alpha+1},2}_{\bot}(\fbD^*_{\alpha})\cap\ker\bB_{\alpha+1}\}.
\end{aligned}
\]
The estimate in \eqref{eq:D^*D_estimate} then apply to the potential $\psi$ in the defining relation for this space, yielding the estimate
\[
\|\psi\|_{s+2m_{\alpha+1}} \lesssim \|\fbD_{\alpha+1}^*\fbD_{\alpha+1}\psi\|_{s}.
\]
By a standard argument (e.g.\ \cite[p.~583]{Tay11a}), this implies that the given subspace closed. Retracing our steps, we conclude that the subspace in \eqref{eq:Dstar_range_proof} is also closed, and therefore the subspace in the claim is closed as well. \eqref{eq:s_identity_proof_adjoint} then follows directly from the estimate above together with \propref{prop:AprioriAstarAk}.
\end{proof}
%%%%%%%%%%%%%%%%%%%%%%%%%%%
\begin{PROOF}{\propref{prop:Hodge_proof}}
Applying \eqref{eq:L2_decom} to the adapted Green operator $\fbD_{\alpha}^*$, one obtains the orthogonal $L^{2}$-decomposition:
\[
\begin{aligned}
L^2(\bbE_{\alpha+1}) = \scrR^{0,2}(\fbD_{\alpha+1}^*) \oplus \scrN^{0,2}(\fbD_{\alpha+1},\bB_{\alpha+1}).
\end{aligned}
\]
On the other hand, the $L^2$ version of the decompositions in \eqref{eq:N0spdecomp} reads
\[
\begin{aligned}
L^2(\bbE_{\alpha+1}) = \scrR^{0,2}(\fbD_{\alpha}; \bB_{\alpha})\oplus \scrN^{0,2}_{\bot}(\fbD_{\alpha}^*) \oplus \module_{\D}^{\alpha+1}.
\end{aligned}
\]
Both decompositions are $L^2$-orthogonal. Using the continuity of the orthogonal projection $\frakI_{\alpha+1}: L^{2}(\bbE_{\alpha+1}) \to \module^{\alpha+1}_{\DD}$, we observe:
\[
\begin{aligned}
\scrN^{0,2}(\fbD_{\alpha+1},\bB_{\alpha+1}) \cap \scrN^{0,2}_{\bot}(\fbD_{\alpha}^*) = \module_{\D}^{\alpha+1} \cap \scrN^{0,2}_{\bot}(\fbD_{\alpha}^*) = \{0\}.
\end{aligned}
\]
Comparing the decompositions, we conclude:
\[
\begin{aligned}
\scrN^{0,2}_{\bot}(\fbD_{\alpha}^*) \subseteq \scrR^{0,2}(\fbD_{\alpha+1}^*).
\end{aligned}
\]

Combining this with \propref{prop:correction_final_dual_new}, we obtain the required equality. Intersecting both sides with $W^{s,2}(\bbE_{\alpha+1})$ and applying the second clause of \propref{prop:correction_final_dual_new}, we then have:
\[
\begin{aligned}
\scrR^{s,2}(\fbD_{\alpha+1}^*) = \scrN^{s,2}_{\bot}(\fbD_{\alpha}^*).
\end{aligned}
\]
Since this holds for every $s\geq 0$, the smooth version also follows. 
\end{PROOF}
%%%%%%%
%%%%%%%%%%%%%%
\subsection{Disrupted case} 
%%%%%%%%%%%%%%
\label{app:disrupted_proof}
We now turn to the disrupted case, namely the content of \propref{prop:disrupted_valid}. We show that, with an additional argument, the result there follows from the analysis in the standard non-disrupted case.

The first step is to observe that, by definition (\defref{def:disrupted_pre_complex}), a disrupted $\alpha_0$-elliptic pre-complex is an $\tilde{\alpha}_0$-elliptic pre-complex with
\[
\tilde{\alpha}_0=\alpha_0-1.
\]
Thus, all relevant lifted systems up to $\fbD_{\tilde{\alpha}_0}$ are constructed as before and satisfy the same properties as in the non-disrupted case. In particular, $\fbD_{\tilde{\alpha}_0-1}$ induces an auxiliary decomposition:
\beq
\begin{aligned}
\tGamma(\bbE_{\tilde{\alpha}_0}) 
= \scrR(\fbD_{\tilde{\alpha}_0-1}; \bB_{\tilde{\alpha}_0-1}) \oplus \scrN(\fbD^*_{\tilde{\alpha}_0-1}).
\end{aligned} 
\label{eq:aux_disrupted_N-1}
\eeq 

Hence, to prove \propref{prop:disrupted_valid}, it remains to show that $\fbD_{\tilde{\alpha}_0}$ induces an auxiliary decomposition (since $\fbD_{\tilde{\alpha}_0+1} = 0$), and that this decomposition, together with \eqref{eq:aux_disrupted_N-1}, refines into Hodge decompositions, albeit without the finite dimensionality of $\module^{\tilde{\alpha}_0}_{\D}$.

By the defining properties of disrupted $\alpha_0$-elliptic pre-complexes and the fact that $\bD_{\tilde{\alpha}_0+1} = 0$, a comparison with the overdetermined ellipticity conditions in \defref{def:elliptic_pre_complex} for $\alpha = \tilde{\alpha}_0 + 1$, together with the fact that $\fbD_{\tilde{\alpha}_0}^*-\bD_{\tilde{\alpha}_0}$ is a lower-order term, shows that the system
\beq
\begin{aligned}
\fbD_{\tilde{\alpha}_0}^*
\end{aligned}
\label{eq:OD_disrupted1}
\eeq
is overdetermined elliptic, without an additional boundary term. Composing this with the overdetermined ellipticity at $\alpha = \tilde{\alpha}_0$ and using the order-reduction property to as usual disregard compact perturbations (again, under which overdetermined ellipticity is invariant, by its equivalence with semi-Fredholmness discussed in \secref{sec:OD}), we find that the following system is also overdetermined elliptic:
\beq
\begin{aligned}
\fbD_{\tilde{\alpha}_0} \fbD_{\tilde{\alpha}_0}^* \oplus \bB_{\tilde{\alpha}_0} \fbD_{\tilde{\alpha}_0}^*.
\end{aligned}
\label{eq:OD_disrupted2}
\eeq
Hence, the sequence
\[
\tilde{\bD}_{-1} = 0, \qquad \tilde{\bD}_{1} = \fbD_{\tilde{\alpha}_0}^*, \qquad \tilde{\bD}_2 = 0, \qquad \tilde{\bD}_{3} = 0, \cdots
\]
is itself an $\tilde{\tilde{\alpha}}_0$-elliptic pre-complex, with $\tilde{\tilde{\alpha}}_0 = 1$, albeit based on {Neumann conditions}, as in \cite[Sec.~3.2]{KL23}. Thus, $\fbD_{\tilde{\alpha}_0}^*$ induces its own {Neumann} auxiliary decompositions, as defined in \cite[Def.~3.3]{KL23}:  
\beq 
\begin{aligned}
\tGamma(\bbE_{\tilde{\alpha}_0}) 
= \scrR(\fbD^*_{\tilde{\alpha}_0}) \oplus \scrN(\fbD_{\tilde{\alpha}_0}; \bB_{\tilde{\alpha}_0}).
\end{aligned}
\label{eq:aux_disrutped_adjoint_N}
\eeq 
%%%%%%%%%%%%
\begin{proposition} 
The decomposition \eqref{eq:aux_disrupted_N-1} refines into a Hodge decomposition (with $\tilde{\alpha}_0=\alpha-1$):
\beq 
\begin{aligned}
\tGamma(\bbE_{\tilde{\alpha}_0}) 
= \scrR(\fbD_{\tilde{\alpha}_0-1}; \bB_{\tilde{\alpha}_0-1}) 
\oplus \scrR(\fbD_{\tilde{\alpha}_0}^*) \oplus  \module^{\tilde{\alpha}_0}_{\DD}.
\end{aligned}
\label{eq:Hodge_like_disrupted_1} 
\eeq
with $\module^{\tilde{\alpha}_0}_{\D}$ closed but not necessarily finite dimensional.
\end{proposition}
%%%%%%%%%%%%%%%%
\begin{proof} 
On the one hand, we have the auxiliary decompositions of both $\fbD_{\tilde{\alpha}_0}^*$ in \eqref{eq:aux_disrutped_adjoint_N} and $\fbD_{\tilde{\alpha}_0-1}$ in \eqref{eq:aux_disrupted_N-1}, which are $L^{2}$-orthogonal; on the other hand, we have the relations obtained from the construction of the lifted complex from the main proof 
\beq 
\begin{aligned}
\scrR(\fbD^*_{\tilde{\alpha}_0}) \subseteq \scrN(\fbD_{\tilde{\alpha}_0-1}^*), \qquad \scrR(\fbD_{\tilde{\alpha}_0-1}; \bB_{\tilde{\alpha}_0-1}) \subseteq \scrN(\fbD_{\tilde{\alpha}_0}, \bB_{\tilde{\alpha}_0}).
\end{aligned}
\label{eq:disrupted_relations_proof} 
\eeq 
Comparing these, we obtain the $L^{2}$-orthogonal decompositions
\[
\begin{aligned}
\tGamma(\bbE_{\tilde{\alpha}_0}) 
= \scrR(\fbD_{\tilde{\alpha}_0-1}; \bB_{\tilde{\alpha}_0-1}) 
\oplus \scrR(\fbD_{\tilde{\alpha}_0}^*) \oplus (\scrN(\fbD_{\tilde{\alpha}_0}, \bB_{\tilde{\alpha}_0}) \cap \scrN(\fbD^*_{\tilde{\alpha}_0-1})).
\end{aligned}
\]
Moreover, since by the defining relation in \thmref{thm:lifted_complex} we have $\fbD_{\alpha} = \bD_{\alpha}$ on $\scrN(\fbD_{\alpha}^*)$, it follows that in the setting of \thmref{thm:Hodge_decomposition} we have:
\[
\begin{aligned}
\scrN(\fbD_{\tilde{\alpha}_0}, \bB_{\tilde{\alpha}_0}) \cap \scrN(\fbD^*_{\tilde{\alpha}_0-1}) = \module^{\tilde{\alpha}_0}_{\DD}.
\end{aligned}
\]
By comparing, these provide the desired Hodge decompositions. 
\end{proof} 
%%%%%%%%%%%%%%%%%%%
\begin{proposition}
$\fbD_{\tilde{\alpha}_0}$ induces an auxiliary decomposition, which refines into a Hodge decomposition as specified by \propref{prop:disrupted_valid}.
\end{proposition}
%%%%%%%%%%%%%%%%
\begin{proof}
By iterating the relations in \eqref{eq:disrupted_relations_proof} upon the decompositions in \eqref{eq:Hodge_like_disrupted_1}: 
\[
\begin{aligned} 
\scrR(\fbD_{\tilde{\alpha}_0};\bB_{\tilde{\alpha}_0}) 
= \BRK{\fbD_{\tilde{\alpha}_0}\fbD_{\tilde{\alpha}_0}^*\psi ~:~ \bB_{\tilde{\alpha}_0}\fbD_{\tilde{\alpha}_0}^*\psi=0}.
\end{aligned} 
\]

By the overdetermined ellipticities listed in \eqref{eq:OD_disrupted1}, the space on the right is closed due to the associated a~priori estimates, and so is its Sobolev versions. In particular, the space $\scrR^{0,2}(\fbD_{\tilde{\alpha}_0};\bB_{\tilde{\alpha}_0})$ is closed, and the $L^{2}$-orthogonal decompositions from \eqref{eq:L2_decom} become: 
\[
\begin{aligned}
L^{2}(\bbE_{\tilde{\alpha}_0+1})
= \scrR^{0,2}(\fbD_{\tilde{\alpha}_0}; \bB_{\tilde{\alpha}_0}) \oplus \scrN^{0,2}(\fbD_{\tilde{\alpha}_0}^*).
\end{aligned}
\]
However, by the overdetermined ellipticity in \eqref{eq:OD_disrupted2}, the kernel on the right-hand side is finite dimensional and consists of smooth sections, and the $L^{2}$-orthogonal projection onto it belongs to the calculus, lying in $\OP(-\infty,-\infty)$. In fact, since $\fbD_{\tilde{\alpha}_0+1} = 0$ (and likewise $\bB_{\tilde{\alpha}_0+1}=0$), we directly find, in the setting of \thmref{thm:Hodge_decomposition} and \propref{prop:disrupted_valid}, that
\[
\begin{aligned}
\scrN^{0,2}(\fbD_{\tilde{\alpha}_0}^*) 
= \scrN(\fbD_{\tilde{\alpha}_0}^*) = \module^{\tilde{\alpha}_0+1}_{\DD}.
\end{aligned}
\]

Hence, on the smooth level we have:
\[
\begin{aligned}
\tGamma(\bbE_{\tilde{\alpha}_0+1})
= \scrR(\fbD_{\tilde{\alpha}_0};\bB_{\tilde{\alpha}_0}) \oplus \module^{\tilde{\alpha}_0+1}_{\DD}.
\end{aligned}
\]
This gives both the auxiliary decompositions and the Hodge decompositions induced by $\fbD_{\tilde{\alpha}_0}$, as required.
\end{proof}

\bibliographystyle{amsalpha}
%\bibliography{MyBibs}

\providecommand{\bysame}{\leavevmode\hbox to3em{\hrulefill}\thinspace}
\providecommand{\MR}{\relax\ifhmode\unskip\space\fi MR }
% \MRhref is called by the amsart/book/proc definition of \MR.
\providecommand{\MRhref}[2]{%
\href{http://www.ams.org/mathscinet-getitem?mr=#1}{#2}
}
\providecommand{\href}[2]{#2}

\end{document}